\numberwithin{equation}{section}
\theoremstyle{plain}
\newtheorem{theorem}{Теорема}
\newtheorem{lemma}{Лемма}[section]
\newtheorem{propos}{Предложение}[section]
\newtheorem{corollary}{Следствие}[section]
\newtheorem{addition}{Дополнение}[section]
\newtheorem{Conj}{Гипотеза}
\theoremstyle{definition}
\newtheorem{definition}{Определение}
\newtheorem{proof}{Доказательство}
\newtheorem{remark}{Замечание}[section]
\newtheorem{example}{Пример}[section]
\newcommand{\const}{{\rm const}}
\renewcommand{\leq}{\leqslant}
\renewcommand{\geq}{\geqslant}
\newcommand{\e}{\varepsilon}
\newcommand{\leftd}{\text{\tiny\rm left}}
\newcommand{\rightd}{\text{\tiny\rm right}}
\def\const{\mathrm{const}}
\def\RR{\mathbb R}
\def\CC{\mathbb C}
\def\NN{\mathbb N}
\def\DD{\mathbb D}
\DeclareMathOperator{\dist}{dist}
\DeclareMathOperator{\scl}{scl}
\DeclareMathOperator{\clos}{clos}
\DeclareMathOperator{\Har}{har}
\DeclareMathOperator{\Hol}{Hol}
\DeclareMathOperator{\Zero}{Zero}
\DeclareMathOperator{\sbh}{sbh}
\DeclareMathOperator{\dom}{dom}
\DeclareMathOperator{\dsbh}{\text{$\delta${\rm -sbh}}}
\DeclareMathOperator{\supp}{supp}
\DeclareMathOperator{\comp}{c}
\DeclareMathOperator{\decr}{decr}
\DeclareMathOperator{\monot}{monot}
\DeclareMathOperator{\loc}{loc}
\begin{document}

\title{Распределение нулей и масс\\голоморфных и субгармонических функций: 
условия типа Адамара и  Бляшке}
\author[B.\,N.~Khabibullin]{Б.\,Н.~Хабибуллин}
\address{Башкирский государственный университет}
\email{khabib-bulat@mail.ru}
\author[N.\,R.~Tamindarova]{Н.\,Р.~Таминдарова}
\address{Башкирский государственный университет}
\email{nargiza89@gmail.com}

\date{02.06.2018}
\udk{517.53 : 517.574}

\maketitle

\begin{fulltext}

\begin{abstract}
Пусть $M$ --- субгармоническая функция-мажоранта в области $D$ комплексной плоскости $\mathbb C$ с мерой Рисса $\nu_M$, $f$ --- ненулевая голоморфная в $D$ функция,  $\log |f|\leq M$ на $D$ и функция $f$ обращается в нуль на последовательности точек ${\tt Z}=\{{\tt z}_k\}_{k=1,2, \dots}\subset D$ {\large(}соответственно $u\not\equiv -\infty$ --- субгармоническая функция с мерой Рисса, или распределением масс, $\nu_u$, $u\leq M$ на $D${\large)}.  Тогда   ограничения на рост меры Рисса $\nu_M$ функции $M$ вблизи границы области $D$ влекут за собой определённые  ограничения на  распределение точек последовательности $\tt Z$ (соответственно на распределение масс $\nu_u$).  Количественная форма исследования этого явления даётся сразу в субгармоническом обрамлении.  Устанавливаются также и результаты в обратном направлении.  Детально разобраны случаи, когда   $D$ --- это $\CC$,  единичный круг, внешность этого круга,  концентрическое кольцо, а  $M$ --- радиальная функция; $D$ ---  регулярная область, а мажоранта  $M$ постоянна на линиях уровня функции Грина области $D$;  $D$ --- область гиперболического типа, а $M$ --- суперпозиция выпуклой функции с функциями, зависящими от гиперболического радиуса; $D$ --- регулярная область, а $M$  --- суперпозиция  выпуклой  функции с функцией, зависящей от расстояния до границы $\partial D$ области $D$ или до некоторого подмножества границы $\partial D$.  Все основные результаты и их реализации в более или менее конкретных ситуациях  новые не только для субгармонических функций, но и для голоморфных, уже в случае, когда $D$ ---  это $\CC$, единичный круг,  кольцо и т.\,д. Таким образом, эта работа не обзор. 

Библиография:  81 название

Distribution of zeros (masses) for holomorphic (subharmonic) functions. I. Hadamard-  and Blaschke-type conditions

Let $M$ be a subharmonic function on a domain  $D$ in the complex plane $\mathbb C$ with the Riesz measure $\nu_M$.
Let  $f$ be a non-zero holomorphic  function on  $D$ such that   $\log |f|\leq M$ on $D$ and the function $f$ vanish on a sequence  ${\tt Z}=\{{\tt z}_k\}_{k=1,2, \dots}\subset D$ {\large(}$u\not\equiv -\infty$ be a subharmonic function  on $D$ with the Riesz measure or the mass distribution $\nu_u$, and $u\leq M$ on $D$ resp.{\large)}.   
Then restrictions on the growth of the Riesz measure $\nu_M$ of the function $M$ near the boundary of the domain $D$ entail certain restrictions on the distribution of points of the sequence $\tt Z$ (to the mass distribution $\nu_u$ resp.).
 A quantitative form of research of this phenomenon is given immediately in the subharmonic framework. We also establish results in the inverse direction. We investigated in detail the cases when $D$ is $\mathbb C$, the unit disk, exterior of the unit disk, a concentric annulus, and $M$ is a radial function; $D$ is a regular domain and $M$ are constant on the level lines of Green's function of this  domain $D$; $D$ is a domain  of hyperbolic type, and $M$ are  the superpositions of convex functions with functions that depend on the hyperbolic radius; $D$ is a regular domain and $M$ is the superposition of convex functions with a  function dependent on the distance to some subset of the boundary of the domain $D$. All our main results and their implementation in more or less concrete situations are new not only for subharmonic functions $u$, and also for holomorphic functions $f$ even in the case when $D$ is $\mathbb C$, the unit disk, an annulus etc. Thus this is not a review.
\end{abstract}

\begin{keywords} 
голоморфная функция, последовательность нулей, субгармоническая функция,  мера Рисса, мера Йенсена 
\end{keywords}

\markright{Распределение нулей  голоморфных функций \dots }

\footnotetext[0]{Исследование выполнено за счёт гранта Российского
	научного фонда (проект №~18-11-00002), при поддержке РФФИ 
(проект №\,16-01-00024).}

\tableofcontents

\section{Введение}\label{s1}

\subsection{Истоки}\label{111}

Исторически первой отправной точкой нашего исследования можно, по-видимому, считать 
 элементарное  следствие Теоремы Э.~Безу (XVIII в.), которое переформулируем в подходящей форме:   {\it многочлен  от комплексной переменной,   растущий не быстрее $n$-ой степени модуля этой переменной, не может иметь более чем $n$ корней  (нулей) даже с учётом кратности на комплексной плоскости $\CC$}. 
Последующие ключевые этапы ---    классические результаты конца XIX--начала XX вв. о распределении нулей целых функций на  $\CC$ и  голоморфных функций  в единичном круге $\DD:=\bigl\{ z\in \CC \colon |z|<1\bigr\}$  при ограничении на рост их модуля  соответственно при стремлении $z\in \CC$ к $\infty$ и при приближении $z\in \DD$ к  единичной окружности  $\partial \DD:=\{z\in\CC \colon |z|=1\}$ --- границе круга $\DD$. Для $0<r\leq +\infty$ положим  $D(r):=r\DD:=\{ rz\colon z\in \DD\}=\{ z\in \CC \colon |z|<r\}$
 --- открытый круг радиуса $r$ с центром в нуле.  Используем понятия из   \cite{Rans}--\cite{St}.

\vskip 1mm
	{\sc Теорема A} {(с условием Ж.~Адамара).} {\it  Пусть  $0<R_0<+\infty$, $M$ --- субгармоническая  в\/ $\CC$ функция с мерой Рисса $\nu_M$, а  
	$f$ --- ненулевая целая функция, обращающаяся в нуль на последовательности\/ ${\tt Z}=\{{\tt z}_k\}_{k=1,2,\dots}\subset \CC$ 
	и $\log |f|\leq M$ на\/ $\CC$ (поточечно). Тогда из условия Адамара для мер  с показателем   $q> 0$ 
		\begin{equation}\label{BcnuWA}
		\int\limits_{\CC\setminus D(R_0)} \frac1{|z|^q}\, d \nu_M(z)<+\infty
	\end{equation}
	следует условие Адамара для последовательностей  с тем же показателем  $q$:
		\begin{equation}\label{BcZWA}
		\sum_{{\tt z}_k\in \CC\setminus D(R_0)} \frac1{|{\tt z}_k|^q}<+\infty	.	
	\end{equation}}
 \vskip 1mm

\noindent
Здесь  {\it тестовая\/} подынтегральная функция в \eqref{BcnuWA}
\begin{equation*}
	z\longmapsto \frac{1}{|z|^q}, \quad z\in \CC\setminus D(R_0),
\end{equation*}
дающая  и слагаемые в  \eqref{BcZWA},   {\it положительна\/} в $\CC\setminus D(R_0)$, {\it стремится к нулю  при\/}  $z\to \infty$ и {\it субгармоническая\/} в  <<проколотой>> плоскости  $\CC \setminus \{0\} \supset \CC\setminus D(R_0)$. 

	\vskip 1mm
	{\sc Теорема B} (с условием В.~Бляшке). {\it  Пусть  $0<r_0<1$, $M$ --- субгармоническая  в\/ $\DD$ функция с мерой Рисса $\nu_M$, 
а $f$ --- ненулевая голоморфная функция в круге\/ $\DD$, обращающаяся в нуль на последовательности\/ ${\tt Z}=\{{\tt z}_k\}_{k=1,2,\dots}\subset \DD$ и $\log |f|\leq M$ на\/ $\DD$ (поточечно). Тогда  из  условия Бляшке для мер в одной из эквивалентных друг другу форм: 
		\begin{equation}\label{Bcnu}
		\left(\,\int\limits_{\DD\setminus D(r_0)} \log \frac1{|z|}\, d \nu_M(z)<+\infty\right)  \; \Longleftrightarrow \;
	\left(\,\int\limits_{\DD\setminus D(r_0)} \bigl(1-| z|\bigr)\, d \nu_M(z)<+\infty\right).	
	\end{equation}
следует условие Бляшке для последовательностей в любой из эквивалентных друг другу форме}
		\begin{equation}\label{BcZ}
		\left(\sum_{{\tt z}_k\in \DD\setminus D(r_0)} \log \frac1{|{\tt z}_k|}<+\infty	\right) \; \Longleftrightarrow \;
	\left(\sum_{{\tt z}_k\in \DD\setminus D(r_0)} \bigl(1-|{\tt z}_k|\bigr)<+\infty\right).	
	\end{equation}
	 \vskip 1mm

\noindent
Здесь {\it тестовые\/} подынтегральные функции в \eqref{Bcnu}
\begin{equation}\label{testDl}
	z\mapsto \log\frac{1}{|z|}\, , \quad z\mapsto 1-|z|, \quad z\in \DD\setminus D(r_0),
\end{equation}
дающие и  слагаемые в \eqref{BcZ}, также {\it положительны\/} и {\it ограничены\/} в $\DD \setminus D(r_0)$,   {\it стремятся к нуля  при приближении  изнутри\/ $\DD$ к\/} $\partial \DD$, а первая из них\footnote{Замена второй супергармонической функции в \eqref{testDl} на эквивалентную ей в данных вопросах функцию $z\mapsto \frac{1}{|z|}-1$ также преобразует её в субгармоническую с сохранением остальных свойств: положительность и стремление к нулю при приближении изнутри к $\partial \DD$.} --- функция Грина для круга $\DD$ с полюсом в нуле ---  гармоническая, а стало быть и {\it субгармоническая\/} в <<проколотом>> круге  $\DD\setminus \{0\} \supset \DD\setminus D(r_0)$. 

\vskip 1mm
{\sc Основной Вопрос.} {\it Пусть $D\subset \CC_{\infty}:=\CC\cup\{\infty\}$ --- область, $D\neq \CC_{\infty}$, $D_0$ --- её  относительно компактная подобласть, $M$ --- субгармоническая  функция в $D$ с мерой Рисса $\nu_M$, $f$ ---  голоморфная функция в $D$, обращающаяся в нуль на  последовательности ${\tt Z}=\{{\tt z}_k\}_{k=1,2,\dots}\subset D$ и $\log |f|\leq M$ на $D\setminus D_0$ (поточечно). Для каких  тестовых функций $v\geq 0$ на $D\setminus D_0$ при ненулевой функции $f$ (пишем  $f\neq 0$) справедлива импликация}
\begin{equation}\label{imp} 
	\left( \int_{D\setminus D_0}v\, d \nu_M <+\infty\right) \; \Longrightarrow \; \left( \sum_{{\tt z}_k \in D\setminus D_0}v ({\tt z}_k) <+\infty\right) \text{\;\large ?}
\end{equation}
\vskip 1mm
  Отрицание подобной импликации позволяет получать теоремы единственности, а именно: если ряд  в \eqref{imp} расходится, а интеграл в \eqref{imp} конечен, то из $\log |f|\leq M$ на $D\setminus D_0$ следует, что $f$ --- нулевая функция (пишем $f=0$). 
			
Наше Следствие \ref{cor:1}, сформулированное в разделе \ref{CorMT},  даёт  следующий
 
\vskip 1mm
{\sc Ответ.} {\it Достаточно, чтобы функция $v$ была положительной    субгармонической в  открытой окрестности множества 
$D\setminus D_0$ и  стремилась к нулю  при приближении изнутри области  $D$ к её границе\/ $\partial D$  в\/ $\CC_{\infty}$. 

\vskip 1mm}
Более общее утверждение --- Теорема \ref{th:1} из подраздела \ref{CorMT}, в которой рассматриваются равномерно ограниченные сверху в $D\setminus D_0$ классы {\it тестовых\/} субгармонических функций $v\geq 0$ и получены равномерные относительно такого класса оценки сумм из  \eqref{imp} через интегралы из \eqref{imp}. Основной Теоремой из раздела \ref{ss:23} охвачен и случай произвольной $\delta$-субгармонической функции-мажоранты $M$\/
\cite{Ar_d}--\cite{Gr}, т.\,е. разности субгармонических функций.  

Основные результаты нашей работы частично были анонсированы  в \cite{KhT15}. Библиографические ссылки ниже
не претендуют  на сколь-ни\-будь близкое  к полноте освещение истории тематики, связанной с содержанием нашей работы.    
Так, ссылка на источник означает, что в нём содержатся и дальнейшие дополнительные сведения. 
Некоторые ранее полученные результаты, тесно связанные с нашими  Следствием \ref{cor:1}  и Теоремой \ref{th:1} изложены в 
\cite{Colwell}--\cite{Kh10}.  
В \cite{Colwell} --- история условия Бляшке; в 
\cite{Dj73}--\cite{Kh06} --- круг $\DD$ и радиальная мажоранта $M$; в \cite{GLO}--\cite{KV}   ---  функции на $\CC$; в 
\cite{BGK09}--\cite{FRdisc13} --- круг  и нерадиальная  мажоранта, равная суперпозиции некоторой  положительной убывающей неограниченной функции на $(0,+\infty)$ с функцией расстояния до некоторого подмножества   единичной окружности $\partial \DD$ --- версия для $\RR^n$ см. в \cite{FR13_D};
в \cite{GK12} --- область-дополнение до $\CC_{\infty}$ конечного числа отрезков на $\RR$ и мажоранта, задаваемая через отрицательные степени расстояний до этих отрезков и их концов;  \cite{FG12_unb} ---   область $D$ в $\CC_{\infty}$, содержащая точку $\infty$ и совпадающая с объединением всех  открытых кругов некоторого фиксированного радиуса, в ней содержащихся, с мажорантой $M$ на $D$, задаваемой как суперпозиция положительной убывающей неограниченной функции на $(0,+\infty)$ с функцией расстояния до границы $\partial D$ --- версии для $\RR^n$ см. \cite{FR13}\label{foot}; 
 в \cite{Khab01}--\cite{Kh10} --- различные виды областей  $D\subset \CC$, вплоть до произвольных, с достаточно общего вида мажорантой $M$. 
В подавляющем большинстве известных результатов или их наглядных иллюстрациях мажоранта $M$ задаётся в той или иной степени явно, так что оценка поведения её меры Рисса $\nu_M$, участвующей в посылке импликации  \eqref{imp}, или даже явное её вычисление как $\nu_M:=\frac{1}{2\pi} \Delta M$  в смысле теории распределений Л.~Шварца, или теории обобщенных функций, через действие оператора Лапласа $\Delta$ бывают возможны {\large(}
см. подраздел \ref{OLsf}{\large)}.   
Мы глубоко признательны \fbox{А.\,Ф.~Гришину}, С.\,Ю.~Фаворову,  Ю.~Риихентаусу  за полезные обсуждения и/или информацию и в особенности  Ф.\,Г.~Авхадиеву за  ценные консультации по геометрической теории функций комплексного переменного (см. пункты \ref{excr}, \ref{distt}, \ref{MgcR}, \ref{MdistED} ниже). 

\subsection{Cтруктура}\label{strs} Основная Теорема нашей работы  из \S~\ref{S2}, подраздел \ref{ss:23}, проиллюстрирована её частными случаями в подразделе \ref{CorMT} Введения --- Теорема \ref{th:1} и её Следствие \ref{cor:1}.  Их точность, как и  Основной Теоремы  на уровне примеров не обсуждается, поскольку
установлено гораздо б\'ольшее:  Теорема  \ref{th:1} и  Основная Теорема для любой области $D$ с неполярной границей допускают  обращение  для непрерывной $\delta$-субгармонической мажоранты  $M$ как в субгармоническом, так и в голоморфном обрамлении --- Теорема \ref{th:inver} и Следствие  \ref{holD}    из \S~\ref{ConverRes}. В подразделе \ref{an_cklf}  рассмотрены обращения для произвольных собственных областей $D\subset \CC_{\infty}$, основанные только на тестовых функциях Грина (Теорема \ref{th3}
и Следствие \ref{holDg}), а также только на тестовых аналитических или полиномиальных дисках (Теорема \ref{th:4} и Следствие \ref{holDgad}).
В \S~\ref{S3} детально разобран случай радиальной  субгармонической  --- см. подраздел \ref{ssCr} --- мажоранты $M$  на плоскости  --- \ref{ssCrC}, в круге и его внешности --- \ref{caseDD}, а также в кольце  --- \ref{Sann}. В \S~\ref{S6tf}
разработана техника построения разнообразных классов тестовых функций с использованием  функций Грина, гиперболического и конформного радиуса области $D$, проработаны их взаимосвязи с функциями расстояний до подмножеств границы $\partial D$. Эти построенные классы тестовых функций позволяют в \S~\ref{Snonrad}  дать серию различных версий Теоремы \ref{th:1} с более или менее явными тестовыми функциями и мерами Рисса мажорант-суперпозиций $M$,  вычисленными в подразделе  \ref{OLsf}. 
О  перспективах развития данного исследования несколько слов сказано в заключительном п.~\ref{persp}  статьи.

\subsection{Основные обозначения, определения и соглашения}\label{s1.1}
\subsubsection{\underline{Множества}}\label{1_1_1}
Как обычно,  $\NN$ --- множество {\it натуральных чисел,\/} а $\RR$ и  $\CC$
 --- множества соответственно всех {\it  вещественных\/} и  {\it комплексных чисел.\/} Через $\CC_{\infty}:=\CC\cup\{\infty\}$ обозначаем {\it сферу Римана,}\/ или расширенную комплексную плоскость,  для которой здесь мы предпочитаем избегать использования естественного  сферического расстояния и для $z\in \CC_{\infty}$ через $|z|$ обозначаем обычный модуль числа $z\in \CC$, а  $|\infty|:=+\infty$. Для подмножества $S\subset \CC_{\infty}$ через $\clos S:=:\overline{S}$, $\partial S$ и  $\complement S:= \CC_{\infty}\setminus S$ обозначаем соответственно {\it замыкание\/}  в $\CC_{\infty}$, {\it границу\/} $S$ в $\CC_{\infty}$ и {\it дополнение\/} $S$ до $\CC_{\infty}$. Подмножество $S_0\subset S\subset \CC_{\infty}$ --- {\it  собственное подмножество в\/} $S$, если $S_0\neq S$. 
Для $S_0 \subset S\subset \CC_{\infty}$ пишем $S_0\Subset S$, если $S_0$ --- {\it относительно компактное подмножество\/} в $S$ в топологии, индуцированной с $\CC_{\infty}$ на $S$. Для   
 $r\in (0,+\infty]$ и $z\in \CC$ полагаем $	D(z,r):=\{z' \in \CC \colon |z'-z|<r\}$  ---  открытый  круг с центром $z$ радиуса $r$. Таким образом, $D(r)=D(0,r)$ и $D(z,+\infty):= \CC$. Для $z=\infty$ нам удобно принять $D(\infty,r):=\{\infty\}\cup \{z\in \CC \colon |z|>1/r\}$ и $D(\infty, +\infty):=\CC_{\infty}\setminus \{0\}$. Кроме того, $\overline{D}(z,r):=\clos D(z,r)$ --- замкнутый круг с центром $z$ радиуса $r>0$, 
но $\overline{D}(z,0):=\{z\}$, $\overline{D}(+\infty):=\CC_{\infty}$.  Таким образом, 
открытые (замкнутые с непустой внутренностью) круги ненулевого радиуса с центром $z\in \CC_{\infty}$ образуют открытую (соответственно замкнутую) базу окреcтностей точки $z\in  \CC_{\infty}$.

{\it Пустое  множество\/}  обозначаем символом $\varnothing$.
Одним и тем же символом $0$ обозначаем, по контексту, число нуль, нулевой вектор, нулевую функцию, нулевую меру и т.\,п. 
 Для подмножества  $X$ упорядоченного векторного пространства  чисел, функций, мер и т.\,п. с отношением порядка \;$\geq$\;
полагаем   	$X^+:=\{x\in X\colon x \geq 0\}$
 --- все положительные элементы из $X$; $x^+:=\max \{0,x\}$. 
{\it Положительность\/} всюду понимается, в соответствии с контекстом, как $\geq 0$.  
{\it На множествах функций\/}   с упорядоченным  множеством значений {\it отношение  порядка\/} индуцируется с множества значений как  {\it поточечное.} 

\subsubsection{\underline{Функции}}\label{1_1_2}  Функция $f\colon X\to Y$ с упорядоченными  $(X, \leq)$, $(Y, \leq)$ {\it возрастающая}, если для любых $x_1,x_2\in X$
из $x_1\leq x_2$ следует $f(x_1)\leq f(x_2)$, и {\it строго возрастающая,\/} если из $x_1< x_2$ следует $f(x_1)< f(x_2)$. Аналогично для (строгого) убывания. Функция  (строго) возрастающая или  убывающая --- ({\it строго\/}) {\it монотонная.} 

Для топологического пространства $T$, как обычно, $C(T)$ --- векторное пространство над $\RR$ или $\CC$ непрерывных функций со значениями  в $\RR$ или $\CC$. 

Через $\dist (\cdot , \cdot) $ обозначаем {\it функции евклидова расстояния\/} между парами точек, между  точкой и множеством, 
между множествами в $\CC$. По определению $\dist (\cdot , \varnothing):=:\dist (\varnothing,\cdot ):=\inf \varnothing:=+\infty
=:\dist (z,\infty):=:\dist (\infty, z)$ при  $z\in \CC$. 
	
	Для открытого множества   $\mathcal O \subset \CC_{\infty}$ через  $\Hol ({\mathcal O})$  и $\Har ({\mathcal O})$ обозначаем  векторные  пространства соответственно над полем $\CC$ {\it голоморфных\/} и над полем $\RR$ {\it гармонических\/} в ${\mathcal O}$ функций; $\sbh ({\mathcal O})$ --- выпуклый конус над $\RR^+$ {\it субгармонических\/} в ${\mathcal O}$ функций \cite{Rans}--\cite{HK}.  Субгармоническую функцию, {\it тождественно равную\/} $-\infty$ на ${\mathcal O}$, обозначаем символом $\boldsymbol{-\infty}\in \sbh ({\mathcal O})$. Как обычно, функция $u$ голоморфная или (суб)гармоническая в открытой  окрестности  $\mathcal O$ точки $\infty \in \CC_{\infty}$, если при {\it инверсии\/} $\star \colon z\mapsto z^{\star}:=1/\bar z$, $z\in \CC_{\infty}$, 
		функция $u^{\star}\colon z\mapsto u(z^{\star})$, $z\in \CC_{\infty}$, такая же, но уже в окрестности $\mathcal O^{\star}:=\{z^{\star}\colon z\in \mathcal O\}$ точки $0\in \CC_{\infty}$.
			Для произвольного подмножества $S\subset \CC_{\infty}$ класс 
		 $\sbh (S)$ состоит из сужений на $S$ функций, субгармонических в некотором, вообще говоря, своём открытом множестве $\mathcal O\subset \CC_{\infty}$, включающем в себя $S$; \; $\sbh^+(S):=\bigl(\sbh(S)\bigr)^+$. 	
			Для ${u}\in \sbh ( S)$ полагаем ${(-\infty)}_{u}(S):=\{z\in S \colon {u}(z)=-\infty\}$ --- это {\it $(-\infty)$-мно\-ж\-е\-с\-т\-во\/} функции ${u}$  в $S$ \cite[3.5]{Rans}, где зачастую пишем просто ${(-\infty)}_{u}$, не указывая множество $S$.

Для функции  или меры  $a$ её сужение на множество $X$ обозначаем как $a\bigm|_X$.

\subsubsection{\underline{Меры}}\label{1_1_3}  Далее $\mathcal M (S)$ --- класс {\it борелевских вещественных мер,\/} или {\it зарядов,\/}  на   $S\subset \CC_{\infty}$;   
$ \mathcal M_{\comp}(S)$  --- подкласс мер в $\mathcal M (S)$ с компактным {\it носителем\/} $\supp \nu\subset S$, 
$\mathcal M^+ (S):=\bigl(\mathcal M (S)\bigr)^+$. Мера $\mu \in  \mathcal M (S)$ {\it сосредоточена\/} на измеримом по мере $\mu$ подмножестве $S_0\subset S$, если $\mu (S')=\mu (S'\cap S_0)$ для любого измеримого по мере  $\mu$ подмножества $S'\subset S$.
Для $\nu\in \mathcal M (S )$  полагаем $\nu (z,r):=\nu \bigl(D(z,r)\bigr)$, если, конечно, $D (z,r)\subset S$.
 
Меру Рисса функции ${u}\in \sbh ({\mathcal O})$  чаще всего будем обозначать как $\nu_{u}:=\frac{1}{2\pi}\Delta {u}\in \mathcal M ^+(\mathcal O)$ или $\mu_u$. 
Так, для  ${u}\neq \boldsymbol{-\infty}$ её мера Рисса  $\nu_{u}$ --- борелевская положительная мера \cite{Rans}--\cite{HK}. Для  функции же $\boldsymbol{-\infty}\in \sbh({\mathcal O})$ её мера Рисса  по определению равна $+\infty$ на любом подмножестве из ${\mathcal O}$.

\subsubsection{\underline{Нули голоморфных функций}}\label{zf}
Пусть $0\neq f\in \Hol({\mathcal O})$. Функция $f$ {\it обращается в нуль на последовательности точек\/ 
${\tt Z}=\{{\tt z}_k\}_{k=1,2,\dots}$,}
 лежащих в $\mathcal O$ (пишем $\tt Z\subset \mathcal O$), если кратность нуля, или корня,  функции $f$ в каждой точке $z \in \mathcal O$ не меньше числа повторений этой точки  в последовательности ${\tt Z}$ (пишем $f({\tt Z})=0$).
Каждой последовательности $\tt Z\subset \mathcal O$ без точек сгущения в $\Omega$ сопоставляем её {\it считающую меру\/}
\begin{equation}\label{df:nZS}
	n_{\tt Z}(S):=\sum_{{\tt z_k}\in S} 1 \quad \text{\it для  произвольных $S\subset D$}
\end{equation}
 --- число точек из $\tt Z$, попавших в $S$.
Последовательность нулей (корней) функции $f\in \Hol (D)\setminus \{0\}$, каким-либо образом перенумерованную с учетом кратности, обозначаем через $\Zero_f $. Так как  $\log |f|\in \sbh (\mathcal O)$,  взаимосвязь меры Рисса $\nu_{\log |f|}$   со считающей мерой  её нулей \eqref{df:nZS} задаётся равенством \cite[Theorem 3.7.8]{Rans}
\begin{equation}\label{nufZ}
	\nu_{\log |f|}=\frac1{2\pi} \Delta \log |f|\overset{\eqref{df:nZS}}{=}n_{\Zero_f}.
\end{equation}
В частности, из $f({\tt Z})=0$ следует $n_{\tt Z}\leq n_{\Zero_f}$ на $\mathcal O$, и наоборот.

\subsubsection{\underline{Некоторые соглашения}}\label{ns} Число $C$ и постоянную функцию, тождественно равную  $C$ не различаем. Через $\const_{a_1, a_2, \dots}$ обозначаем вещественные постоянные, вообще говоря, зависящие от $a_1, a_2, \dots$ и, если не оговорено противное, только от них; $\const_{a_1, a_2, \dots}^+\geq 0$.

Ссылка над знаками (не)равенства, включения, или, более общ\'о, бинарного отношения, означает, что при переходе к правой части этого отношения применялись, в частности, и отмеченная  формула, определение, обозначение или  утверждение. 

(Под)область в $\CC_{\infty}$ --- открытое связное подмножество в $\CC_{\infty}$.
\underline{Далее всюду } 
	
	\begin{equation*}
						\boxed{\varnothing \neq D_0\Subset D \neq \CC_{\infty},  \text{ {\sffamily{где $D_0,D$ --- подобласти в}} $\CC_{\infty}$.}}
	\eqno{\rm(D)} 
\end{equation*}
Основной объект --- область $D$, а роль подобласти $D_0\Subset D$ аналогична роли фиксированной точки области (множества) или  начала отсчёта, или координат, в пространстве. Она,  --- во всяком случае,  в начале формулировок основных утверждений, --- может выбираться произвольно. Как правило, чем б\'ольшей она выбрана, тем, формально, соответствующее утверждение  --- более общее.
  
\subsubsection{\underline{Тестовые субгармонические функции}}\label{tsf} Аналог основных (финитных) функций теории распределений Л.Шварца, или обобщённых функций,  в нашей работе  предоставляет
\begin{definition}\label{testv} Функцию $v\in \sbh^+ (D\setminus D_0)$ называем {\it тестовой\/} ({\it для области $D$ вне подобласти  $D_0$}), если 
			\begin{equation}\label{resv0}
	\lim_{D\ni z'\to z}v(z') =0 \quad \text{\it для любой точки $z\in \partial D$}.	
	\end{equation}
	{\it Класс всех тестовых функций\/} обозначаем через $\sbh_0^+(D\setminus D_0)$. Для каждого числа $b\geq 0$ определим его  {\it подкласс} 
\begin{equation}\label{sbh+}
	 \sbh_0^+(D\setminus D_0;\leq b):=\left\{v\in \sbh_0^+(D\setminus D_0)\colon 
	 \sup_{z\in \partial D_0}v(z)\leq b \right\},
\end{equation}
\end{definition}

\begin{remark}\label{rmp}  Из принципа максимума для субгармонических функций
\begin{equation}\label{sbh+0b}
	 \sbh_0^+(D\setminus D_0) =\bigcup_{b\geq 0} \sbh_0^+(D\setminus D_0;\leq b),
\end{equation}
и в определении \eqref{sbh+}, учитывая \eqref{resv0},  можно заменить $\sup_{z\in \partial D_0}$  на $\sup_{z\in D\setminus  D_0}$. 
\end{remark}
\begin{remark}\label{rmmv} Во всех утверждениях настоящей работы, где используются классы \eqref{sbh+0b} и $\sbh_0^+(D\setminus D_0)$, можно использовать, вообще говоря, и более широкие классы тестовых функций соответственно $\sbh_0^+\bigl(D\setminus \clos{D_0}; \leq b\bigr)$
\begin{subequations}\label{df:mv}
\begin{align} 
 :=\Bigl\{v\in \sbh^+(D\setminus \clos {D_0}\,)&\colon \lim_{D\ni z'\to z}v(z') \overset{\eqref{resv0}}{=}0, \;
	 \limsup_{\clos {D_0} \not{\ni} z\to  \partial D_0}v(z)\leq b \Bigr\} ,
\tag{\ref{df:mv}b}\label{df:mvb}\\ 
  \sbh_0^+\bigl(D\setminus \clos{D_0}\bigr) &:= \bigcup_{b\geq 0} \sbh_0^+\bigl(D\setminus \clos{D_0}; \leq b\bigr) 
\tag{\ref{df:mv}$\infty$}\label{df:mvi}
\end{align}
\end{subequations}
с соответствующими заменами интегралов по множеству $D\setminus D_0$ и сумм по точкам в таких множествах соответственно интегралами и суммами по множеству  $D\setminus \clos D_0$. Формально это несколько обобщает наши результаты, но по существу незначительно, поскольку небольшие <<раздутия>>    подобласти   $D_0$ с сохранением   рамочного условия (D) практически не ослабляют результаты, но уже переводят классы 
  \eqref{df:mv} в классы из Определения \ref{testv}. В то же время использование классов  \eqref{df:mv} требует, во всяком случае в наших  доказательствах,  большей детализации и затеняет основную их линию. 
	\end{remark}
	\begin{remark}\label{1_3}
	Часто также будет использоваться <<минимальный>> класс тестовых функций
		\begin{equation}\label{mint}
	\sbh_0^+\bigl(D\setminus \{z_0\}\bigr)	:=
	\Bigl\{v\in \sbh^+(D\setminus \{z_0\})\colon \lim_{D\ni z'\to z}v(z') \overset{\eqref{resv0}}{=}0\Bigr\},
	\end{equation}
	 который формально можно считать пересечением всех классов 
	\eqref{sbh+0b}, или \eqref{df:mvi}, по всем подобластям $D_0\ni z_0$ из рамочного соглашения (D) п.~\ref{ns} Введения.
\end{remark}
\subsection{Частные случаи Основной Теоремы}\label{CorMT}
Одно из следствий   нашей Основной Теоремы   из \S~\ref{S2}, содержащего как подготовительные, так и основные  результаты с полными доказательствами,    может быть сформулировано как
\begin{corollary}\label{cor:1} Пусть  $ M\in \sbh(D)\setminus \{\boldsymbol{-\infty}\}$ с мерой Рисса $\nu_M$,  
$u\in \sbh(D)$ с мерой Рисса $\nu_u$, а также  $u\leq M$ на $D\setminus D_0$.
Тогда  для любой  тестовой функции $v\in \sbh_0^+(D\setminus D_0)$  верна импликация
\begin{equation}\label{vnuum}	
	\left(\int_{D\setminus D_0}v\,d\nu_M<+\infty\right)\;\Longrightarrow \;
	\left(\int_{D\setminus D_0} v\, d \nu_u<+\infty\right).
\end{equation}
Так, если $0\neq f\in \Hol (D)$ и $f$ обращается  в нуль на\/ ${\tt Z}=\{{\tt z}_k\}_{k=1,2,\dots}\subset D$,  а $\log |f|\leq M$ на $D\setminus D_0$, то  
для любой $v\in \sbh_0^+(D\setminus D_0)$ справедлива импликация 
\begin{equation}\label{vnuumf}
\hspace{-4,5mm}\left(\int_{D\setminus D_0}v\,d\nu_M<+\infty\right)\Longrightarrow 
\left(	\sum_{{\tt z}_k\in D\setminus D_0} v({\tt z}_k)<+\infty\right).
\end{equation}
\end{corollary}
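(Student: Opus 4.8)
The plan is to deduce both implications from a single quantitative comparison together with two elementary reductions. First I would reduce the holomorphic statement \eqref{vnuumf} to the subharmonic one \eqref{vnuum}. Put $u:=\log|f|\in\sbh(D)\setminus\{\boldsymbol{-\infty}\}$; by hypothesis $\log|f|\le M$ on $D\setminus D_0$, so \eqref{vnuum} is applicable to this $u$. Its Riesz measure is $\nu_u=\nu_{\log|f|}=n_{\Zero_f}$ by \eqref{nufZ}, and since $f(\tt Z)=0$ we have $n_{\tt Z}\le n_{\Zero_f}$ on $D$. Because $v\ge0$, integrating against these measures and using \eqref{df:nZS} gives $\sum_{{\tt z}_k\in D\setminus D_0}v({\tt z}_k)=\int_{D\setminus D_0}v\,dn_{\tt Z}\le\int_{D\setminus D_0}v\,d\nu_u$, so the conclusion of \eqref{vnuum} already controls the sum in \eqref{vnuumf}. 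Hence \eqref{vnuumf} follows as soon as \eqref{vnuum} is established.

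For \eqref{vnuum} itself I would first invoke Remark \ref{rmp}: by \eqref{sbh+0b} every $v\in\sbh_0^+(D\setminus D_0)$ belongs to $\sbh_0^+(D\setminus D_0;\le b)$ for some finite $b\ge0$, so it suffices to prove the quantitative estimate
\[ \int_{D\setminus D_0}v\,d\nu_u\ \le\ \int_{D\setminus D_0}v\,d\nu_M\ +\ C_b, \]
with $C_b$ depending only on $b$ and on $u,M$ near $\partial D_0$; this is exactly the assertion that the main theorem of \S\ref{S2} is designed to supply. Granting it, the hypothesis $\int_{D\setminus D_0}v\,d\nu_M<+\infty$ forces the right-hand side to be finite, whence $\int_{D\setminus D_0}v\,d\nu_u<+\infty$, which is \eqref{vnuum}.

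The mathematical core of that estimate is a Green/Jensen duality between the Riesz measures of $u,M$ and that of the weight $v$. Writing $\nu_w=\tfrac1{2\pi}\Delta w$ and applying Green's second identity on $D\setminus D_0$ to the pairs $(v,u)$ and $(v,M)$ and subtracting, one obtains formally
\[ \int_{D\setminus D_0}v\,d\nu_u-\int_{D\setminus D_0}v\,d\nu_M=\int_{D\setminus D_0}(u-M)\,d\nu_v+\frac1{2\pi}\int_{\partial(D\setminus D_0)}\bigl(v\,\partial_n(u-M)-(u-M)\,\partial_n v\bigr)\,ds. \]
Since $u-M\le0$ on $D\setminus D_0$ and $\nu_v\ge0$, the first term is $\le0$. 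On $\partial D$ the vanishing $v=0$ annihilates the $v\,\partial_n(\cdot)$ contribution, while $-(u-M)\partial_n v\le0$ because the outer normal derivative $\partial_n v\le0$ there (as $v\ge0$ inside and $v=0$ on $\partial D$); on the fixed interior curve $\partial D_0$, where $v\le b$, the remaining contribution is bounded by a constant of the advertised type. This yields the displayed inequality.

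The main obstacle is making this duality rigorous: $v,u,M$ are merely subharmonic, so Green's identity must be read distributionally and justified by smoothing and by exhausting $D$ with an increasing sequence of regular subdomains, and — most delicately — the $\partial D$ boundary term must be shown to vanish in the limit using only $\lim_{D\ni z\to\partial D}v=0$ for an arbitrary, possibly irregular boundary. That limiting analysis, rather than the two bookkeeping reductions above, is where the genuine work resides, and it is precisely the content that the main theorem of \S\ref{S2} carries out; the present corollary then reduces to quoting it after passing to a fixed level $b$ and, for the holomorphic case, to identifying $\nu_{\log|f|}$ with $n_{\Zero_f}$.
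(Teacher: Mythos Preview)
Your proposal is correct and follows essentially the same route as the paper: reduce to a fixed level $b$ via Remark~\ref{rmp}/\eqref{sbh+0b}, then invoke the quantitative estimate of Theorem~\ref{th:1} (the paper's \eqref{mest+}) to pass from finiteness of $\int v\,d\nu_M$ to finiteness of $\int v\,d\nu_u$, and handle the holomorphic case through $u=\log|f|$ and $n_{\tt Z}\le n_{\Zero_f}=\nu_{\log|f|}$. The only point you leave implicit that the paper makes explicit is the choice of a point $z_0\in D_0$ with $u(z_0)\neq-\infty$; without it the constant $C\overline{C}_M-Cu(z_0)$ in \eqref{mest+} could be $+\infty$ and the estimate would be vacuous, so you should state that such $z_0$ exists (since $u\not\equiv\boldsymbol{-\infty}$) and fix it before applying Theorem~\ref{th:1}.
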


\begin{theorem}\label{th:1} 
В условиях Следствия\/ {\rm \ref{cor:1}} при  $z_0\in D_0\setminus (-\infty)_M$  	для любого числа  $b>0$  	найдутся  постоянные  
\begin{equation}\label{cz0C+}
C :=\const_{z_0,D, D_0,b}^+, \qquad           \overline{C}_M := \const_{z_0,D, D_0,M}^+,
\end{equation}
с которыми  для любой функции $v\overset{\eqref{sbh+}}{\in} \sbh_0^+(D\setminus D_0;\leq b)$ выполнено  неравенство
 \begin{equation}\label{mest+}
\int_{D\setminus D_0}  v \,d {\nu}_u 		 \leq 	\int_{D\setminus D_0}  v \,d {\nu}_M	+		C\,	\overline{C}_M -C\, u(z_0) .
\end{equation}
В частности, если $f\in \Hol (D)$ --- ненулевая функция, обращающаяся в нуль на последовательности\/ ${\tt Z}=\{{\tt z}_k\}_{k=1,2,\dots}\subset D$,  а $\log |f|\leq M$ на $D\setminus D_0$, то для любой тестовой функции $v\overset{\eqref{sbh+}}{\in} \sbh_0^+(D\setminus D_0;\leq b)$ справедливо неравенство 
\begin{equation}\label{in:fz0}
	\sum_{{\tt z}_k\in D\setminus D_0} v({\tt z}_k)\leq \int_{D\setminus D_0} v\,d\nu_M+	C \,\overline{C}_M -C \,\log \bigl|f(z_0)\bigr|.
\end{equation}
При этом постоянная  $\overline{C}_M$ из \eqref{cz0C+} положительно однородна  по $M$, т.\,е. 	 $\overline{C}_{aM}=a\overline{C}_M$ для любого  $a\in  \RR^+$, и  полуаддитивна сверху, т.\,е. 		$\overline{C}_{M_1+M_2}\leq \overline{C}_{M_1}+\overline{C}_{M_2}$ при любых  $M_1, M_2 \in \sbh (D)\setminus \{\boldsymbol{-\infty}\}$.

\end{theorem}  

\begin{remark}\label{addnu} Если в Следствии \ref{cor:1} или в Теореме \ref{th:1} известно, что $\nu_M\leq \nu^0\in \mathcal M^+(D)$ и $ \mathcal M^+(D)\ni \nu_0\leq \nu_u$ на $D$, то в заключениях \eqref{vnuum} и \eqref{mest+} можно заменить $\nu_u$ на $\nu_0$ и $\nu_M$ на $\nu^0$, 
поскольку тестовые функции положительны. При этом, конечно, постоянные из \eqref{cz0C+} становятся зависимыми от разности мер $\nu^0-\nu_M$, а $u(z_0)$ в правой части \eqref{mest+} надо заменить на некоторое другое число, но  в дополнительном изначальном  предположении, что $u(z_0)\neq -\infty$. 
\end{remark}

\section{Основные результаты}\label{S2}

\subsection{$\delta$-субгармонические функции}\label{2_3} Основным руководством по  $\delta$-суб\-г\-а\-р\-м\-о\-н\-и\-ч\-е\-с\-ким функциям  могут служить работы \cite{Ar_d}--\cite{Gr}.  Здесь используем наиболее подходящую  для рассматриваемой здесь проблематики   
трактовку разности субгармонических функций, близкую к    \cite[2]{Gr}. Стандартные алгебраические действия над элементами расширенной числовой прямой $[-\infty,+\infty]:=\{-\infty\}\cup \RR \cup \{+\infty\}$ естественным образом, как и порядковая структура, переносятся с $\RR$.
  Наряду с функцией $\boldsymbol{-\infty}$ используем и функцию $\boldsymbol{+\infty}$, тождественно равную $+\infty$.
	Функцию $M\colon D\to [-\infty,+\infty]$ в области $D\subset \CC_{\infty}$ 
называем {\it тривиальной $\delta$-суб\-г\-ар\-м\-о\-н\-и\-ч\-е\-с\-кой,\/} если $M=\boldsymbol{-\infty}$ или $M=\boldsymbol{+\infty}$ на $D$, и   
{\it нетривиальной $\delta$-суб\-г\-ар\-м\-о\-н\-и\-ч\-е\-с\-кой функцией с\/} (вещественной борелевской) {\it  мерой Рисса\/} $\nu_M\in \mathcal M(D)$, или {\it зарядом Рисса\/} $\nu_M$, если выполнены  следующие три условия-соглашения.
\begin{enumerate}
	\item\label{da} {\it Существуют две функции $ u_1, u_2\in \sbh (D)\setminus \{\boldsymbol{-\infty}\}$ с мерами Рисса соответственно $\nu_{u_1}, \nu_{u_2}\in \mathcal M^+(D)$, для которых $M(z):=u_1(z)-u_2(z)\in \RR$ для всех $z\notin (-\infty)_{u_1}\cup (-\infty)_{u_2}$.\/} При этом для заряда (меры)  Рисса $\nu_M:=\nu_{u_1}- \nu_{u_2}\in \mathcal M(D) $  однозначно определено разложение Хана--Жордана $\nu_M:=\nu_M^+-\nu_M^-$ на разность взаимно сингулярных положительной и отрицательной вариаций $\nu_M^+, \nu_M^-\in \mathcal M^+(D)$, $|\nu_M|:=\nu_M^++ \nu_M^-$ --- полная вариация меры $\nu_M$. Для полунепрерывной сверху   локально ограниченной сверху  функции $v\colon D\to \RR^+$ полагаем
		\begin{equation}\label{intvnuM}
	\int_D v \, d 	\nu_M:=\int_D v \, d 	\nu_M^+ -\int_D v \, d 	\nu_M^-
	\end{equation}
	при условии конечности хотя бы одного из  интегралов справа.
	\item\label{dad} {\it Определяющее множество ${\dom M}\subset D$ --- это множество точек $z\in D$, для каждой из которых при некотором  $r_z>0$ конечен интеграл}
				\begin{equation}\label{dMa}
		\hspace{-3mm}\left(\int\limits_0^{r_z}\frac{|\nu_M|(z,t)}{t} \, dt<+\infty\right)\Leftrightarrow
		\left(\int\limits_{D(z,r_z)} \log |z'-z| \, d |\nu_M|(z')>-\infty\right).
		\end{equation}
		{\it При этом   доопределяем функцию $M$ на всех  $z\in \dom M$ по правилу
				\begin{equation*}
		M(z)=\lim_{0<r\to 0} \frac{1}{2\pi} \int_0^{2\pi} M(z+re^{i\theta}) \, d \theta \in \RR	\quad \text{для всех $z\in {\dom M}$},
		\end{equation*}}
		что всегда согласуется с предварительно заданными значениями $M$ в предыдущем п.~\ref{da} на   $\CC_{\infty}\setminus 
		\bigl((-\infty)_{u_1}\cup (-\infty)_{u_2}\bigr)\subset \dom M $. В частности, известно \cite{Rans}--\cite{HK}, что {\it в случае субгармонической\/}  в $D$
		функции $u\neq \boldsymbol{-\infty}$ её определяющее множество $\dom u=D\setminus (-\infty)_u$.

		\item\label{dai} {$M(z)=+\infty$ при\footnote{В \cite[2]{Gr} положено $M(z)=0$ для $z\notin {\dom M}$, но для наших целей предпочтительнее  $=+\infty$, поскольку функция $M$ у нас играет роль	мажоранты.} $z\in D\setminus {\dom M}$.}
\end{enumerate}

Пункт \ref{da}, следуя \cite[Theorem 11]{Ar_d},    
можно заменить, избегая  явного упоминания  субгармонических функций, на 
				\begin{enumerate}
			\item[$1'.$]  {\it $M$ --- локально интегрируемая по мере  Лебега на $D$ функция, обладающая  в смысле теории распределений  
			свойством: для любой подобласти $D'\Subset D$ найдется постоянная $C'\in \RR^+$, с которой для любой финитной дважды непрерывно дифференцируемой  функции  			$f \colon D'\to \RR$ с носителем $\supp f \subset D'$ имеет место неравенство
			\begin{equation*}
			\Bigl|\int_{D'} M	\Delta f \, d\lambda \Bigr|\leq C' \max_{z\in D'} \bigl|f (z)\bigr|,\quad \text{$\lambda$ --- мера Лебега на $D$.}
			\end{equation*}
			При этом $\nu_M:=\frac{1}{2\pi} \Delta M$ в смысле теории распределений.}
		\end{enumerate}
		При таком подходе    п.~\ref{dad} заменяем  соответственно на 
				\begin{enumerate}
			\item[$2'.$]  {\it Для определяющего множества ${\dom M}\subset D$, определяемого, как и выше, условием конечности интегралов вида  \eqref{dMa}, полагаем 
			\begin{equation*}
			M(z)=\lim_{0<r\to 0} \frac{1}{\pi r^2} \int_{D(z,r)} M \, d \lambda 	\quad \text{для всех $z\in {\dom M}$},
			\end{equation*}
			 где $D\setminus {\dom M}$ --- нулевой ёмкости. В частности, $\lambda (D\setminus {\dom M})=0$.}
		\end{enumerate}
		Пункт~\ref{dai} сохраняем или считаем функцию $M$ неопределённой в $D\setminus {\dom M}$.

Класс $\delta$-субгармонических в $D$ функций обозначаем через $\dsbh (D)$.

\subsection{{\sc  Основная Теорема}}\label{ss:23} {\it Пусть\/  $\boldsymbol{\pm\infty} \neq M\in \dsbh (D)$ с зарядом Рисса 
$\nu_M:=\frac{1}{2\pi}\Delta M$ и   определяющим множеством\/   ${\dom M}\subset D$  	{\rm{\large(}см. \ref{2_3};  \ref{dad}, \eqref{dMa}{\large)}}. 
Тогда   для любых 
\begin{enumerate}[{\rm (i)}]
	\item\label{zbi+} точки $z_0\in D_0\cap {\dom M}$ и числа  $b>0$,
			\item\label{Diii+} регулярной\footnote{Простые и наглядные достаточные условия регулярности области геометрического характера см., например, в \cite[Теорема 2.11]{HK}, а другие --- в \cite[4.2]{Rans}.}
		области $\widetilde{D}\subset \CC_{\infty}$ с функцией Грина $g_{\widetilde{D}}(z, z_0)$ с полюсом в точке $z_0$ при  $D_0\Subset \widetilde{D}\subset D$  и $\CC_{\infty}\setminus \clos \widetilde{D}\neq \varnothing$ 
	\end{enumerate}
	 найдётся  число 
\begin{equation}\label{cz0C}
C:=\const_{z_0,D_0,\widetilde{D},b}^+:= \frac{b} {\inf\limits_{z\in \partial D_0}  g_{\widetilde{D}}(z, z_0)}>0,
\end{equation}
с которыми для любой функции $u\in \sbh (D)\setminus \{\boldsymbol{-\infty}\}$, удовлетворяющей неравенству $u\leq M$ на $D\setminus D_0$, 
а также 
\begin{enumerate}
		\item[{\rm (iii)}]\label{vii+}  для любой функции $v\overset{\eqref{sbh+}}{\in}  \sbh_0^+(D\setminus D_0;\leq b) $
		\end{enumerate}
выполнено неравенство
\begin{subequations}\label{mest}
\begin{align}C u(z_0) 	+\int\limits_{D\setminus D_0}  v \,d {\nu}_u 		&\leq	\int\limits_{D\setminus D_0}  v \,d {\nu}_M	+\int\limits_{\widetilde{D}\setminus D_0} v \,d {\nu}_M^-   +C\, \overline{C}_M,
\tag{\ref{mest}v}\label{{mest}v}
\\	
\text{где}\quad
\overline{C}_M:=	\int\limits_{\widetilde{D}} g_{\widetilde{D}}(\cdot, z_0)  \,d {\nu}_M  
		&+\int\limits_{\widetilde{D}\setminus D_0} g_{\widetilde{D}}(\cdot, z_0)  \,d {\nu}_M^-  +M(z_0). 
\tag{\ref{mest}C}\label{{mest}C}
\end{align}
\end{subequations}
   Для величины $\overline{C}_M$ из    \eqref{{mest}C} возможно значение $+\infty$, но при $\widetilde{D}\Subset D$ --- это  некоторая постоянная 
	$\overline{C}_M:=\const_{z_0,D_0, \widetilde{D},M}^+<+\infty$, положительно однородная  и  полуаддитивная сверху по $M$.}

\begin{remark}\label{remw} Основная Теорема верна   и для тестовых  функций из Замечания \ref{rmmv}, т.\,е. с функциями 
$v\overset{\eqref{df:mvi}}{\in} \sbh_0^+(D\setminus \clos D_0)$ в п.~(iii). 
\end{remark}

\subsection{Меры и потенциалы Йенсена}\label{i:mpJ}\label{mJ:Ex}  

\begin{definition}[\cite{Khab91}, \cite{Gamelin}--\cite{Khab03}]\label{df:J} Меру $\mu \in \mathcal M^+(\CC_{\infty})$ называем  {\it мерой Йенсена внутри области\/ $D$ в точке\/ $z_0\in D$,\/} если 
$\mu \in \mathcal M^+_{\comp}(D)$, , т.\,е. $\supp \mu \subset D$,  и 
\begin{equation}\label{df:mJ}
	u(z_0)\leq \int u\,d \mu \quad    \text{\it  для всех  $u\in \sbh(D)$}.
\end{equation}
 Класс всех таких мер Йенсена обозначаем через $J_{z_0}(D)$. 
\end{definition}
Очевидно, по Определению \ref{df:J} каждая мера  $\mu \in J_{z_0}(D)$ {\it вероятностная,\/} т.\,е. $\mu (D)=1$. Далее неоднократно используется то, что {\it для любого множества нулевой ёмкости\/} $E\subset D$, в частности, при  $E=(-\infty)_u$, $\boldsymbol{-\infty}\neq u\in \sbh (D)$, {\it для меры $\mu \in J_{z_0}(D)$ имеем $\mu \bigl(E\setminus \{z_0\}\bigr)=0$ \cite[Corollary 1.8]{CR}.}

 \begin{example}\label{ex:1hm} Пусть $\widetilde{D}$ --- область в $D$ с неполярной границей,  $\CC_{\infty}\setminus \clos \widetilde{D}\neq \varnothing$. 
{\it Гармоническая мера\/} $\omega_{\widetilde{D}}(z_0, \cdot)$ {\it для\/} (или относительно) $\widetilde{D}$ {\it в точке\/} $z_0\in \widetilde{D}$ 
{\large(}см. \cite[4.3]{Rans}, \cite[3.6]{HK}, \cite[5.7.4]{HK}{\large)}
при условии $\widetilde{D}\Subset D$   --- пример меры Йенсена из $J_{z_0}(D)$. Вырожденный случай   --- мера Дирака  $\delta_{z_0}$ в точке $\{z_0\}$, т.\,е. единичная масса, сосредоточенная в точке $z_0$. 
 \end{example}

\begin{definition}[{\large(}\cite{Khab91}, \cite{Khab01}, \cite{Khab99VB}--\cite{Khab99}, {\rm  ср. c {\cite[3.1, 3.3]{Gamelin}; ранние частные случаи  --- в \cite{Sar71}--\cite{And81}}}{\large)}]\label{df:PoJ}
 Функцию $V$ из кла\-с\-са $\sbh^+\bigl(\CC_{\infty} \setminus \{z_0\}\bigr)$ 
называем {\it потенциалом Йенсена  внутри  области $D$ с полюсом в точке  $z_0\in D$,} если выполнены два условия:
\begin{enumerate}[{\rm 1)}]
\item\label{V:f2} {\it найдётся  область $D_V\Subset D$, содержащая  точку $z_0\in D_V$, для которой $V (z)\equiv 0$ при всех   $z\in
\CC_{\infty} \setminus D_V$, т.\,е. $V\bigm|_{\CC_{\infty} \setminus D_V}=0$} ( финитность в $D$);  
\item\label{V:f3}  задана  {\it логарифмическая полунормировка в точке\/ $z_0$,} а именно: 
\begin{subequations}\label{nvz}
\begin{align}
\limsup\limits_{z_0\neq z \to z_0}\dfrac{V(z )}{l_{z_0}(z)}&\leq  1, 
 \tag{\ref{nvz}o}\label{nvzn}\\ 
\text{\it где}\quad l_{z_0}(z):=&\begin{cases}
\log \frac1{|z-z_0|} \quad&\text{\it при $z_0\neq \infty$},\\
\log |z| \quad&\text{\it при $z_0= \infty$}.
\end{cases}
\tag{\ref{nvz}l}\label{{nvz}INF}
\end{align}
\end{subequations}
Условия \eqref{nvz} можно записать в эвивалентной форме
\begin{subequations}
\begin{align*}
\sup\limits_{z\neq z_0}\left (V(z )-\log^+\frac1{|z-z_0|}\right)&<+\infty \quad \text{при $z_0\in \CC$}  ,
 \\ 
 \sup\limits_{z\neq \infty}\bigl (V(z )-\log^+ |z|\bigr)&<+\infty \quad \text{при $z_0= \infty$} .
\end{align*}
\end{subequations}

\end{enumerate}
Класс всех  таких потенциалов Йенсена обозначаем через  $PJ_{z_0} (D)$. 
\end{definition}
Имеет место очевидное (см. Определение \ref{testv})
\begin{propos}\label{PJ=test}
 Каждый потенциал Йенсена из $PJ_{z_0} (D)$ при $z_0\in D_0$ --- тестовая функция для $D$ вне $D_0$, 
т.\,е. $PJ_{z_0} (D) \subset \sbh_0^{+}(D\setminus D_0)$ для $z_0\in D_0$.  
 \end{propos}
\begin{example}\label{exp2}  Пусть $\widetilde{D}$ --- область в $\CC_{\infty}$ с неполярной границей, 
$\CC_{\infty}\setminus \clos \widetilde{D}\neq \varnothing$. {\it Функция Грина  $g_{\widetilde{D}}(\cdot ,z_0)$} {\it для} (или относительно) $\widetilde{D}${\; \it  с полюсом в точке $z_0\in \widetilde{D}$, продолженная на\/} $\CC_{\infty}$ по правилу
{\large(}см. \cite[4.4]{Rans}, \cite[3.7, 5.7]{HK}{\large)}
\begin{equation}\label{gD0r}
	g_{\widetilde{D}}(z, z_0):=\begin{cases}
	0 \quad &\text{при $z\in \CC_{\infty}\setminus \clos \widetilde{D}$},\\
	\limsup\limits_{\widetilde{D}\ni z'\to z} g_{\widetilde{D}}(z', z_0)\quad &\text{при $z\in \partial \widetilde{D}$}.
	\end{cases}
\end{equation}
   --- пример потенциала  Йенсена из $PJ_{z_0}(D)$  при условии $\widetilde{D}\Subset D$. Вырожденный вариант   --- 
функция, тождественно равная нулю на $\CC_{\infty}\setminus \{z_0\}$. 
 \end{example}
	
	{\it Логарифмический потенциал рода\/ $0$ вероятностной меры\/ $\mu \in \mathcal  M^+_{\comp}(\CC_{\infty})$  с полюсом в точке $z_0\in \CC_{\infty}$} определяем для всех $w\in \CC_{\infty}\setminus \{z_0\}$ как функцию
\begin{subequations}\label{df:Vmu}
	\begin{align}
	V_{\mu}(w ) 	&:= 		\int_{D} \log \Bigl|\frac{w-z}{w-z_0}\Bigr| \,d \mu (z)
	= \int_{D} \log \Bigl|1-\frac{z-z_0}{w-z_0}\Bigr| \,d \mu (z) \quad\text{\it при $z_0\neq \infty$},
	\tag{\ref{df:Vmu}o}\label{{df:Vmu}o}
	\\
	\intertext{где при $w=\infty$ подынтегральные выражения доопределены значением  $0$,} 
	V_{\mu}(w ) 	&:=
	\int_{D} \log \Bigl|\frac{w-z}{z}\Bigr| \,d \mu (z)=
	\int_{D} \log \Bigl|1-\frac{w}{z}\Bigr| \,d \mu (z) \quad\text{\it при $z_0= \infty$},
\tag{\ref{df:Vmu}$\infty$}\label{{df:Vmu}in}
		\end{align}
\end{subequations}
где при $z=\infty$ подынтегральные выражения доопределены значением  $0$.

Отметим основные взаимосвязи между мерами и потенциалами Йенсена. 
Первое --- следующее утверждение о двойственности:
\begin{propos}[{\cite[Предложение 1.4, Теорема двойственности]{Khab03}}]\label{pr:1} Отображение  $\mathcal P$, 
определяемое по правилу 
\begin{equation}\label{con:P}
	\mathcal P \colon J_{z_0}(D)\to PJ_{z_0} (D), \quad 
	\mathcal P (\mu)\overset{\eqref{df:Vmu}}{:=} V_{\mu}, \quad \mu \in  J_{z_0}(D) 
\end{equation}
--- аффинная\footnote{Здесь это означает, что $\mathcal P\bigl(t\mu_1+(1-t)\mu_2\bigr)=t\mathcal P (\mu_1)+(1-t)\mathcal P (\mu_2)$ для всех $t\in [0,1]$.} биекция, а 
\begin{equation}\label{eq:mu}
	{{\mathcal P}}^{-1}(V)\overset{\eqref{{nvz}INF}}{=}\frac1{2\pi} \Delta  V\Bigm|_{D\setminus \{z_0\}}+
	\Bigl(1-\limsup\limits_{z_0\neq z \to z_0}\dfrac{V(z )}{l_{z_0}(z)}\Bigr)\cdot
	{\delta}_{z_0}\, , \quad V\in PJ_{z_0}(D).
\end{equation}
В частности,  для регулярной  области $\widetilde{D}\Subset D$ при $z_0\in \widetilde{D}$ --- классическое
\begin{equation*}
	\mathcal P\bigl(\omega_{\widetilde{D}}( z_0, \cdot )\bigr)=g_{\widetilde{D}}(\cdot, z_0 ), \quad z_0\in \widetilde{D}\Subset D.
\end{equation*}
\end{propos}

Второе   ---  это {\it обобщённая формула Пуассона--Йенсена:\/}
\begin{propos}[{\cite[Предложение 1.2]{Khab03}}]\label{pr:2}
Пусть $\mu \in J_{z_0}(D)$. Тогда для любой функции $u\in \sbh (D)$ с мерой  Рисса
$\nu_u$ при $u(z_0)\neq -\infty$ имеем равенство
\begin{equation}\label{f:PJ}
	u(z_0) +\int_{D\setminus \{z_0\}} V_{\mu} \,d {\nu}_u=\int_{D} u \,d \mu  .
\end{equation}
В частности, для регулярной области $\widetilde{D}\Subset D$ --- это классическая формула
\begin{equation}\label{f:PJgo}
	u(z_0) +\int_{\widetilde{D}\setminus \{z_0\}} g_{\widetilde{D}}(\cdot, z_0) \,d {\nu}_u
	=\int_{\widetilde{D}} u\,d \omega_{\widetilde{D}}(z_0,\cdot)  .
\end{equation}
\end{propos}

\subsection{Тестовые функции и  потенциалы Йенсена}\label{ss:2_2} Отдельные элементы предлагаемых ниже конструкций в очень  частных случаях уже успешно использовались нами ранее, например, в \cite{Khab91}, \cite{Khab91s}, \cite{Khab99}  по различным поводам.

\begin{propos}\label{exttf} Для каждой тестовой функции $v\overset{\eqref{sbh+0b}}{\in} \sbh_0^+(D\setminus D_0)$ её, обозначаемое тем же символом $v$, продолжение 
\begin{equation}\label{d:exvv}
	v(z):=\begin{cases}
v(z) \quad &\text{для $z\in D\setminus D_0$},\\
0 \quad &\text{для $z\in \CC_{\infty}\setminus D$}		
	\end{cases}
\end{equation}
--- субгармоническая функция  в $\CC_{\infty}\setminus D_0$, равномерно стремящаяся  к нулю при приближении к\/ $\CC_{\infty}\setminus  D$ в том смысле, что для любого числа $\e >0$  найдётся подобласть $D_{\e} \Subset D$, для которой $D_0\subset D_{\e}$ и  
 \begin{equation}\label{0ev}
	 0\leq v(z)<\e \quad\text{во  всех точках  $z\in \CC_{\infty} \setminus D_{\e}$}
 \end{equation}
 {\rm (ср. с более жёсткой  финитностью из п.~{\rm \ref{V:f2}} Определения \ref{df:PoJ})}. Свойство, выражаемое равномерными соотношениями 
\eqref{0ev} будем записывать далее в виде
\begin{equation}\label{est:u0} 
\lim_{D\ni z\to \partial D}v(z)=0.
\end{equation}
 \end{propos}
\begin{proof} Для каждой точки $z\in \partial D$ по Определению \ref{testv} тестовых функций 
	$\lim_{D\ni z'\to z}v(z')\overset{\eqref{resv0}}{=}0$ и для продолженной функции \eqref{d:exvv} при любом  $\e >0$ найдётся число $r_{\e}(z)$, для которого $0\leq v(z')<\e$ для всех $z'\in D\bigl(z, r_\e (z)\bigr)$. В силу компактности $\partial D$ в $\CC_{\infty}$ можно выбрать конечное число кругов $D\bigl(z_j, r_\e (z_j)\bigr) $, $j=1,2,\dots n<+\infty$, с  которыми  при 
	$$D_{\e}:=D\setminus \bigcup_{j=1}^n D\bigl(z_j, r_\e (z_j)\bigr) $$
		выполнено 	\eqref{0ev}. В частности, функция  \eqref{d:exvv} непрерывная на $\CC_{\infty}\setminus D$, а значит полунепрерывная сверху на  некотором открытом множестве, включающем в себя  $\CC_{\infty}\setminus D_0$. Интегрируемость по кругам с центром  в точках $z\in \CC_{\infty}\setminus D$ и мажорирование средними по ним нулевых значений функции \eqref{d:exvv}  на $\CC_{\infty}\setminus D$ очевидны ввиду положительности этой функции.		
\end{proof}

\begin{remark}\label{rod} Предложение \ref{exttf} и его доказательство переносятся и на  тестовые  функции $v\overset{\eqref{df:mvi}}{\in} \sbh_0^+(D\setminus \clos D_0)$ из Замечания \ref{rmmv}    с той лишь разницей, что в первой строчке правой части   \eqref{d:exvv}  надо заменить $D_0$ на $\clos D_0$.
\end{remark}

\begin{definition}\label{df:ctf} {\it Тестовую функцию\/} $v\overset{\eqref{sbh+0b}}{\in} \sbh_0^+(D\setminus D_0)$, продолженную на $\CC_{\infty}$ по правилу  \eqref{d:exvv},  называем {\it продолженной\/} на $\CC_{\infty}\setminus D_0$ {\it тестовой функцией,\/} обозначая, как правило,  это продолжение той же буквой, или тем же символом,  что  и исходную тестовую функцию {\large(}ср. с \eqref{gD0r} из Примера \ref{exp2}{\large)}. 

Кроме того, если при этом $\mu_v\in \mathcal M^+(D\setminus D_0)$ --- мера Рисса исходной тестовой функции  $v\overset{\eqref{sbh+0b}}{\in} \sbh_0^+(D\setminus D_0)$,  то меру Рисса продолженной тестовой функции, сосредоточенную, вообще говоря, уже на $\clos D\setminus D_0\subset \CC_{\infty}\setminus D_0$, т.\,е. из класса   $\mathcal M^+(\clos D\setminus D_0)$,  также называем {\it продолженной мерой Рисса\/} продолженной тестовой функции $V$ и тоже обозначаем её как  $\mu_v$. 
\end{definition}
\begin{propos}\label{prprm} Пусть $z_0\in \CC_{\infty}$, $p\in \RR^+$ и  функция $V\in \sbh\bigl(\CC_{\infty}\setminus\{z_0\}\bigr)$ с мерой Рисса $\mu_V$  на $\CC_{\infty}\setminus\{z_0\}$ удовлетворяет условию 
\begin{equation*}
\limsup\limits_{z_0\neq z \to z_0}\dfrac{V(z )}{l_{z_0}(z)}\overset{\eqref{nvz}}{=} p, 
\quad \text{\it  где}
\quad l_{z_0}(z)\overset{\eqref{{nvz}INF}}{:=}\begin{cases}
\log \frac1{|z-z_0|} \quad&\text{\it при $z_0\neq \infty$},\\
\log |z| \quad&\text{\it при $z_0= \infty$}.
\end{cases}
\end{equation*}
Тогда 
	$\mu_V \bigl(\CC_{\infty}\setminus\{z_0\}\bigr)=p$.
\end{propos}
\begin{proof} При $z_0\neq \infty$ с помощью  инверсии $\star$, как во Введении в разделе \ref{s1.1}, но, если необходимо, со сдвигом
\begin{equation}\label{inv_z0}
	\star_{z_0} \colon z\longmapsto \frac{1}{\overline{z-z_0}}\,, \quad z\in \CC_{\infty}, \quad \star_{z_0}(z_0)=\infty,
\end{equation}
сохраняющими, как известно, при соответствующей замене переменных субгармоничность и полную меру,
можно всегда перейти к случаю $z_0=\infty$. 
Остаётся воспользоваться представлением Адамара для $V$ \cite[Теорема 4.2]{HK}, а затем  и  \cite[Theorem 6.32]{HII}, как в 
\cite[Доказательство Предложения 1.4, (1.13)--(1.14)]{Khab03}.
\end{proof}
\subsubsection{\underline{Продолжение тестовых функций в $D\setminus \{z_0\}$}}\label{spf:241}  {\it 

\begin{propos}\label{prop:V}  Для любых 
\begin{enumerate}[{\rm (i)}]
	\item\label{zbi} точки $z_0\in D_0$ 
	и числа  $b\in \RR^+$,
	\item\label{Diii} регулярной области
	 	$\widetilde{D}\subset \CC_{\infty}$ с\/ $D_0\Subset \widetilde{D}\subset D$  и $\CC_{\infty}\setminus \clos \widetilde{D}\neq \varnothing$,
\end{enumerate}
найдётся   число 
\begin{equation}\label{cz0}
\widetilde{c}:=\const_{z_0,D_0,\widetilde{D}, b}^+=\frac{1}{b} \inf_{z\in \partial D_0}  g_{\widetilde{D}}(z, z_0)  >0,
\end{equation}
для которого  по любой функции $v\overset{\eqref{sbh+}}{\in}  \sbh_0^+(D\setminus D_0;\leq b) $ с мерой Рисса $\mu_v$
при помощи продолженной функции Грина  $g_{\widetilde{D}}(\cdot, z_0)$ можно  построить  функцию 
\begin{equation}\label{cr:Vng}
\widetilde{V}:= 
\begin{cases}
g_{\widetilde{D}}(\cdot, z_0)\quad &\text{на $\clos D_0\setminus \{z_0\}$},\\
\max\bigl\{g_{\widetilde{D}}(\cdot, z_0) ,\; \widetilde{c}\cdot v\bigr\} \quad &\text{на $\widetilde{D}\setminus \clos D_0$},\\
\widetilde{c}\cdot v \quad &\text{на $D\setminus \widetilde{D}$},\\
0 \quad &\text{на $\CC_{\infty}\setminus {D}$},
\end{cases}
\end{equation}
обладающую следующими  свойствами:
\begin{subequations}\label{VVV}
\begin{align}
\hspace{-20mm}
\widetilde{V}\in \sbh^+\bigl(\CC_{\infty}\setminus \{z_0\}\bigr)\text{ с } &\text{мерой Рисса }
{\mu_{\widetilde{V}}}\in 
\mathcal M^+\bigl(\clos D\bigr), 
\tag{\ref{VVV}V}\label{cr:Vn+V}\\ 
\widetilde{V}\bigm|_{D_0\setminus \{z_0\}} &\in \Har \bigl(D_0\setminus \{z_0\}\bigr),
\tag{\ref{VVV}h}\label{VVVh}
\\
 \lim_{D\ni z\to \partial D} \widetilde{V}(z)&\overset{\eqref{est:u0}}{=}0,
\tag{\ref{VVV}o}\label{VVVho} 
\\
\lim_{z_0\neq z \to z_0}
\frac{\widetilde{V}(z )}{l_{z_0}(z)
}=  1\; &\text{    --- нормировка в точке $z_0$,}
\tag{\ref{VVV}n}\label{VVVo}
\\
\text{где}\quad l_{z_0}(z)&\overset{\eqref{{nvz}INF}}{:=}\begin{cases}
\log \frac1{|z-z_0|} \quad&\text{при $z_0\neq \infty$},\\
\log |z| \quad&\text{при $z_0= \infty$},
\end{cases}
\tag{\ref{VVV}l}\label{VVVl}\\
D_0\cap \supp {\mu_{\widetilde{V}}}=\varnothing, \quad {\mu_{\widetilde{V}}}\bigm|_{D\setminus \clos \widetilde{D}}&=\widetilde{c}\mu_v\bigm|_{D\setminus \clos \widetilde{D}}, 
\quad {\mu_{\widetilde{V}}} \bigl(\clos D\bigr)=1.
\tag{\ref{VVV}m}\label{cr:Vn+N}
\end{align}
\end{subequations}
\vskip 1mm
\end{propos}
}

\begin{proof}
Функция $v\overset{\eqref{sbh+}}{\in} \sbh_0^+(D\setminus D_0;\leq b)$ ввиду положительности и выполнения свойства  \eqref{est:u0}, т.\,е. 
$\lim\limits_{z\to \partial D}v(z)=0$, по Предложению \ref{exttf}  может быть  распространена  на открытое в $\CC_{\infty}$ дополнение $\CC_{\infty}\setminus \clos D_0$ значениями $0$ на $\CC_{\infty}\setminus D$ как субгармоническая. Таким образом, можем считать функцию $v$ определённой и субгармонической всюду на открытом множестве $\CC_{\infty}\setminus \clos D_0$. Кроме того,  в силу её полунепрерывности сверху  в  окрестности границы $\partial D_0\supset \partial (\CC_{\infty} \setminus \clos D_0)$, для неё имеет место  ограничение
\begin{equation}\label{ev0}
	\hspace{-1mm} 	\limsup_{(\CC_{\infty} \setminus \clos D_0) \ni z'\to z} v(z')\leq \sup_{z\in \partial D_0} v(z) \overset{\eqref{sbh+}}{\leq} b
		\quad \text{для всех $z\in \partial (\CC_{\infty} \setminus \clos D_0)$.}
\end{equation}

Для области $\widetilde{D}$ из условия \eqref{Diii} рассмотрим  продолженную, как в Примере \ref{exp2}, функцию Грина $g_{\widetilde{D}}(\cdot, z_0)$.
Из  свойств функции Грина \cite[Theorem 4.4.3]{Rans}
			\begin{equation}\label{dfa}
	a:=\inf_{z\in \partial D_0}  g_{\widetilde{D}}(z, z_0)>0.	
	\end{equation}
	Теперь перейдём к функции 		
\begin{subequations}\label{v0}
\begin{align}
	v_0&\overset{\eqref{dfa}}{:=}\frac{b}{a}\,  g_{\widetilde{D}}(\cdot , z_0)  \in \sbh^+\bigl(\CC_{\infty}\setminus\{z_0\}\bigr),
\tag{\ref{v0}v}\label{v0v}\\ 
\intertext{для которой по известным свойствам функций Грина 
\cite[Theorem 4.4.9]{Rans}}
 \lim_{D\ni z\to z_0}\frac{v_0(z)}{l_{z_0}(z)}
&\overset{\eqref{VVVl}}{=} \frac{b}{a}\,, \quad v_0 (z)\equiv 0 \quad\text{для всех $z\in D\setminus \widetilde{D}$}. 
\tag{\ref{v0}o}\label{v0o}
\\
v_0\bigm|_{\widetilde{D}\setminus \{z_0\}}&\in \Har \bigl(\widetilde{D}\setminus \{z_0\}\bigr).
\tag{\ref{v0}h}\label{v0h}
\end{align}
\end{subequations}
Кроме того, ввиду \eqref{dfa}, \eqref{v0v}, \eqref{v0o} и \eqref{ev0}
\begin{equation}\label{vvv}
	v_0\bigm|_{\partial (\CC_{\infty} \setminus \clos D_0)}\overset{\eqref{v0v}}{\geq} b
	\overset{\eqref{ev0}}{\geq} v\bigm|_{\partial (\CC_{\infty} \setminus \clos D_0)}, \quad v_0\bigm|_{D\setminus \widetilde{D}}=0\leq v\bigm|_{D\setminus \widetilde{D}} .
\end{equation}
Будет использована

\textsc{Теорема о склейке} \cite[Theorem 2.4.5]{Rans}. {\it Пусть $\mathcal O_0, \mathcal O_1$ --- открытые множества в $\CC$ и 
$\mathcal O_1 \subset \mathcal O_0$. Пусть $v_0\in \sbh (\mathcal O_0)$, $v\in \sbh (\mathcal O_1)$.  Если 
\begin{equation}\label{0vs}
	\limsup_{\mathcal O_1\ni  z'\to z} v(z')\leq v_0(z) \quad \text{для всех $z\in \mathcal O_0\cap \partial \mathcal O_1$},
\end{equation}
то функция
\begin{equation}\label{consv}
	\widetilde{v}:=\begin{cases}
\max\{v,v_0\} \quad &\text{на\/ $\mathcal O_1$},\\
v_0		\quad &\text{на\/ $\mathcal O_0\setminus \mathcal O_1$},
	\end{cases}
\end{equation}
субгармоническая на  $\mathcal O_0$.}
\vskip 1mm

 Применим Теорему о склейке при
\begin{equation*}
	\mathcal O_0:=\CC\setminus \{z_0\}, \quad \mathcal O_1:=\CC\setminus \clos D_0
\end{equation*}
к функциям $v_0$  из \eqref{v0v} и $v$, продолженной выше на  $\mathcal O_1$, удовлетворяющим, ввиду \eqref{vvv},  условию \eqref{0vs}. По построению
\eqref{consv} и ввиду \eqref{v0o} для построенной функции $\widetilde{v}\in \sbh^+\bigl(\CC\setminus \{z_0\}\bigr)$ её конструкцию можно описать через соответствующие сужения более детально:
\begin{subequations}\label{v000}
\begin{align}
	0\leq \widetilde{v}&=
	\begin{cases}
		v_0 \quad &\text{на $\clos D_0\setminus \{z_0\}$},\\ 
	\max\{v,v_0\}\quad &\text{на $\widetilde{D}\setminus \clos D_0$},\\
	v \quad &\text{на $D\setminus  \widetilde{D}$},\\
	0 \quad &\text{на $\CC_{\infty}\setminus  D$},
	\end{cases}
\tag{\ref{v000}v}\label{v000v}\\
\intertext{и при этом имеет место нормировка} 
	\limsup_{\widetilde{D}\ni w\to z_0}&\frac{\widetilde{v}(w)}{l_{z_0}(w)}\overset{\eqref{v0o}}{=} \frac{b}{a} \, ; \quad 
	\widetilde{v}\bigm|_{D_0\setminus \{z_0\}}\overset{\eqref{v0h}}{\in} \Har \bigl(D_0\setminus \{z_0\}\bigr).
\tag{\ref{v000}o}\label{v000o}
\end{align}
\end{subequations}
Таким образом, ввиду \eqref{v000o} для положительной субгармонической функции 
\begin{equation}\label{V000o}
\widetilde{V}\overset{\eqref{v000v}}{:=}\frac{a}{b}\, \widetilde{v}	\in \sbh^+\bigl(\CC_{\infty}\setminus \{z_0\}\bigr) 
\end{equation}
выполнено условие нормировки  \eqref{VVVo}. Положим 
\begin{equation*}
	\widetilde{c}:=\frac{a}{b}\overset{\eqref{dfa}}{=} \frac1b \inf_{z\in \partial D_0}  g_{\widetilde{D}}(z, z_0) 
\end{equation*}
в \eqref{cz0}. Тогда из    \eqref{v000v}, домноженного на  $\widetilde{c}$, получим в точности \eqref{cr:Vng}. Наконец, все перечисленные свойства \eqref{cr:Vn+V}--\eqref{VVVl}  ---  прямые следствия построения \eqref{v000}--\eqref{V000o} и известных свойств функции Грина $g_{\widetilde{D}}(\cdot , z_0)$, участвующей в нём, начиная с  \eqref{v0v}. 
Первое равенство в \eqref{cr:Vn+N} следует из  \eqref{VVVh} (гармоничность), второе --- из третьей  строчки в \eqref{cr:Vng} конструкции  функции $\widetilde{V}$. Мера ${\mu_{\widetilde{V}}}$ вероятностная по Предложению \ref{prprm} с $p=1$ ввиду  нормировки \eqref{VVVo}. 
\end{proof}
\begin{remark}\label{asPJ} Из перечня  свойств \eqref{VVV} функции $\widetilde{V}$ из \eqref{cr:Vng} по Определению \ref{df:PoJ} легко видеть, что эта функция --- потенциал Йенсена из $PJ_{z_0}(D')$ внутри любой подобласти $D'\ni z_0$ в $\subset \CC_{\infty}$ при  $D\Subset D'$. Соответственно по Предложению \ref{pr:1} из \eqref{eq:mu} следует, что мера Рисса ${\mu_{\widetilde{V}}}=\frac{1}{2\pi} \widetilde{V}$ --- мера Йенсена 
внутри таких же областей $D'\subset \CC_{\infty}$ в точке $z_0$. Более того, ${\mu_{\widetilde{V}}}$ --- мера Йенсена в точке $z_0$ внутри  любой области, содержащей  {\it оболочку\/} 
множества $\{z_0\}\cup \supp \mu_{\widetilde{V}}$, 
полученную  объединением  этого множества со всеми относительно компактными компонентами в $D'$ из дополнения $\{z_0\}\cup \supp \mu_{\widetilde{V}}$ до $D'$
\cite[Предложение 1.1]{Khab03}.
\end{remark}
\begin{remark}\label{mur} Если меру $\mu_v$ рассматривать в продолженном смысле как меру Рисса продолженной тестовой  функции 
$v$  вида \eqref{d:exvv} из Определения \ref{df:ctf}, то по построению $\widetilde{V}$ промежуточное равенство в \eqref{cr:Vn+N} уточняется до  исчерпывающего равенства
${\mu_{\widetilde{V}}}\bigm|_{\clos D\setminus \clos \widetilde{D}}=\widetilde{c}\,\mu_v\bigm|_{\clos D\setminus \clos \widetilde{D}}$\,.
\end{remark}

\begin{remark}\label{rem} Доказательство без изменений проходит и для тестовых  функций из Замечания \ref{rmmv}, т.\,е. функций 
$v\overset{\eqref{df:mvb}}{\in} \sbh_0^+(D\setminus \clos D_0; \leq b)$. При этом 
в \eqref{ev0} даже можно пропустить промежуточное выражение $ \sup_{z\in \partial D_0} v(z) $ в неравенствах  --- 
см. условие \eqref{0vs}  Теоремы о склейке.
\end{remark}
\begin{remark}\label{25}
Продолжение тестовой функции $v$ после  домножения на $\widetilde{c}$ в $D\setminus \{z_0\}$  означает, что каждая тестовая функция $v$ может  рассматриваться как тестовая функция из минимального  класса $\sbh^+_0\bigl(D\setminus \{z_0\}\bigr)$ {\large(}см. \eqref{mint}, Замечание \ref{1_3}{\large)} и продолжается на $\CC_{\infty}\setminus \{z_0\}$ с нулевыми значениями на $\CC_{\infty}\setminus D$.
\end{remark}

\subsubsection{\underline{Продолженная функция --- предел потенциалов Йенсена}}\label{sss242}
\begin{propos}\label{pr:Vn}
 {\it Пусть $\widetilde{V}$ --- функция из \eqref{cr:Vng}, построенная в\/
{\rm п.~\ref{spf:241}.} Тогда последовательность функций
\begin{equation}\label{Vninc+}
{V}_n:=\Bigl(\widetilde{V}-\frac1n\Bigr)^+:=\max\Bigl\{0, \widetilde{V}-\frac1n\Bigr\}\in 	\sbh^+\bigl(\CC_{\infty}\setminus\{z_0\}\bigr), \quad n\in \NN,
\end{equation}
представляет собой последовательность потенциалов  Йенсена $V_n\in PJ_{z_0}(D)$,   для которой поточечно 
\begin{subequations}\label{cr:Vn}
 \begin{align} 
\lim_{n\to+\infty} V_n=\widetilde{V}, \quad 
V_n&\leq V_{n+1} \quad \text{на $D\setminus \{z_0\}$ при всех  $n\in \NN$},
\tag{\ref{cr:Vn}l}\label{cr:Vnm}
\\
\intertext{для некоторого числа $r_0>0$ с вложением $D(z_0,r_0)\Subset D_0$ имеем}
V_n\bigm|_{D(z_0,r_0)} &\in \Har \bigl(D(z_0,r_0)\bigr)\quad \text{при больших $n\in \NN$},
\tag{\ref{cr:Vn}h}\label{cr:Vnh}
\\
\lim_{(D\setminus \{z_0\})\ni z \to z_0}
&\frac{V_n(z )}{l_{z_0}(z)}= 1  \quad  \text{  --- нормировка в $z_0$ при всех $n\in \NN$}.
\tag{\ref{cr:Vn}o}\label{cr:Vno}
\end{align}
\end{subequations}}
\end{propos}

\begin{proof}
При любом $n\in \NN$ для каждой из функций 
\begin{equation}\label{Vninc}
{V}_n:=\Bigl(\widetilde{V}-\frac1n\Bigr)^+:=\max\Bigl\{0, \widetilde{V}-\frac1n\Bigr\}\in 	\sbh^+\bigl(\CC_{\infty}\setminus\{z_0\}\bigr),
\end{equation}
её субгармоничность и  положительность в $\CC_{\infty}\setminus \{z_0\}$ --- следствие из  \eqref{cr:Vn+V}, 
нормировка   \eqref{cr:Vno} --- из \eqref{VVVho},  гармоничность \eqref{cr:Vnh} --- \eqref{VVVh}, а требуемая в Определении \ref{df:PoJ} потенциалов Йенсена финитность в $D$ вытекает из построения \eqref{Vninc} и свойства \eqref{VVVo}. Таким образом,  ${V}_n \in PJ_{z_0}(D)$ --- потенциалы Йенсена внутри $D$ с полюсом $z_0\in D$. Согласно построению \eqref{Vninc}
последовательность $(V_n)_{n\in \NN}$ возрастающая и стремится поточечно к $\widetilde{V}$ в смысле \eqref{cr:Vnm}.
\end{proof}

\subsubsection{\underline{Одна гипотеза о последовательности выметаний мер}}\label{baly}
Остановимся на  природе {\it вероятностных мер Йенсена\/} $\mu_n$ потенциалов  Йенсена $V_n$ с носителем $\supp \mu_n\subset D$  из 
\eqref{Vninc+} и законе их построения по вероятностной мере Рисса  $\mu_{\widetilde{V}} \in \mathcal M^+(\clos D)\subset \mathcal M^+(\CC_{\infty})$.
Удобно считать, что $z_0=\infty$, поскольку к этому случаю всегда можно перейти с помощью инверсии со сдвигом  вида \eqref{inv_z0}.

Каждому $n\in \NN$ сопоставим открытое, в силу полунепрерывности сверху функции $\widetilde{V}$,  множество 
$\mathcal O_n :=\bigl\{z\in \CC
 \colon  \widetilde{V}(z) <  1/n \bigr\} $
и рассмотрим его дополнение $K_n:=\CC\setminus \mathcal O_n$  --- компакт в $\CC$ ввиду условия \eqref{cr:Vno} логарифмической нормировки.
Дословно повторив  классическую процедуру выметания меры $\mu_{\widetilde{V}}$ на компакт $K_n$, 
основанную на некоторой  аппроксимационной технике, детально описанной в \cite[гл.~IV, \S\S~1,2,4]{L}, можно убедиться, что {\it меры Рисса\/
$\mu_n\in J_{\infty}(D)\subset \mathcal M^+_{\comp}(D)$ субгармонических функций $V_n$--- это  выметания   вероятностной меры\/ $\mu_{\widetilde{V}} \in \mathcal M^+(\clos D)\subset \mathcal M^+(\CC)$ из\/ $\mathcal O_n $  на\/  $K_n $, а при $\NN\ni n\to +\infty$  последовательность вероятностных  мер $\mu_n\in \mathcal M^+(\CC)$\/  $*$-слабо сходится на $\CC$ к вероятностной мере $\mu_{\widetilde{V}}\in 
\mathcal M^+(\CC)$, т.\,е. для любой финитной\/ {\rm на  $\CC$} непрерывной  функции $f\colon \CC\to \RR$ существует предел $\lim\limits_{n\to +\infty}\int f \, d \mu_n=\int f \, d \mu_{\widetilde{V}}$.}  

По обобщенной формуле  Пуассона--Йенсена, прописанной ниже в виде \eqref{Pju}  и возрастания функций $V_n$ следует, что {\it для любой\/}  функции $u\in \sbh (D)$ имеет место цепочка неравенств 
\begin{equation*}
	\dots \leq \int_D u\,d\mu_n\leq \int_D u \,d \mu_{n+1}\leq \dots .
\end{equation*}
Это означает,  что каждая следующая вероятностная мера $\mu_{n+1}$ является выметанием предшествующей вероятностной меры $\mu_n$ относительно конуса $\sbh (D)$  \cite{Khab01} ---  в записи 
 \begin{equation}\label{precmu}
	\dots \prec \mu_n\prec \mu_{n+1}\prec \cdots\,
 \end{equation}
	\begin{Conj}\label{g:1}  Пусть задана последовательность выметаний \eqref{precmu} относительно $\sbh (D)$ вероятностных мер Йенсена  
	$\mu_n\in  J_{z_0}(D)$,  $*$-слабо сходящаяся на $\CC_{\infty}$ к вероятностной мере $\mu \in J_{z_0}(\CC_{\infty})$, сосредоточенной 
внутри  области $D$, т.\,е. $\mu(\CC_{\infty}\setminus D)=0$. Тогда для  любой интегрируемой по мере $\mu$ функции $u\in \sbh(D)$ имеем
\begin{equation*}
 	\lim_{n\to +\infty} \int_D u\,d\mu_n =\int_{D} u\, d\mu \quad \text{ или \;  $\leq \int_{D} u\, d\mu$}. 
\end{equation*}
 
	\end{Conj}

Для некоторых  специальных областей и/или функций $u$ удается подтвердить эту гипотезу, но это ещё весьма далеко от общей ситуации. Например, Гипотеза \ref{g:1} со знаком равенства верна, если функцию $u$ можно представить в виде {поточечно возрастающей к $u$ в $D$ последовательности} функций $u_k \in \sbh (D_k)$, $k\in \NN$,  где последовательность областей  $\dots \Supset D_{k+1}\Supset D_{k}\Supset \cdots$ в пересечении даёт область $D$. Обнадёживающим фактором в пользу справедливости Гипотезы \ref{g:1} может служить подобный известный  факт 
для потенциалов в пространствах выметаний \cite[VI, 10.2--5]{BH}. 
Справедливость Гипотезы  \ref{g:1} дало бы дальнейшее развитие обобщенной формулы Пуассона--Йенсена из Предложения \ref{pr:2} вплоть до применения к тестовым функциям, продолженным на $\CC_{\infty}\setminus \{z_0\}$ в рамках Замечания \ref{25} (см. и  первое из заключительных замечаний в п.~\ref{persp} ниже). 
 
\subsection{\sc Доказательство Основной Теоремы}\label{ss:+} Будет использована построенная в  п.~\ref{sss242}
 возрастающая к функции $\widetilde{V}$ из \eqref{cr:Vng} последовательность потенциалов Йенсена $(V_n)_{n\in \NN}$ со свойствами \eqref{cr:Vn}. Каждому потенциалу Йенсена $V_n$ по Предложению \ref{pr:1} с отображением $\mathcal P$ из \eqref{con:P} соответствует 
мера Йенсена $\mu_n$ внутри области  $D$ в точке $z_0\in D\setminus {\dom M}$: 
\begin{equation}\label{cr:Vnh00}
\mu_n:= \mathcal P^{-1} (V_n)\overset{\eqref{eq:mu}, \eqref{cr:Vno}}{=}\frac1{2\pi} \Delta V_n\in PJ_{z_0}(D), \quad
\supp \mu_n\overset{\eqref{cr:Vnh}}{\subset} D\setminus D_0.  
\end{equation}
Пусть $M=u_1-u_2$, где $u_1,u_2\in \sbh (D)$ с мерами Рисса  $\nu_M^+, \nu_M^-\in \mathcal M^+(D)$ соответственно, причём по определению $\delta$-субгармонической функции ввиду  $z_0\in {\dom M}$ имеем $u_1(z_0)\neq -\infty$ и $u_2(z_0)\neq -\infty$. 
При этом,  если $u(z_0)=-\infty$ или \underline{не} выполнены условия 
	\begin{equation}\label{Mv}
				\int_{D\setminus D_0}v \, d \nu_M^-\overset{\eqref{intvnuM}}{<}+\infty,  \quad 
					\int_{\widetilde{D}\setminus D_0}g_{\widetilde{D}} (\cdot, z_0)\, d \nu_M^-
			{<}+\infty,  	
				\end{equation}
то неравенство  \eqref{mest} тривиально. Поэтому можем рассмотреть только случай, когда  $u(z_0)\neq -\infty$ и одновременно выполнено \eqref{Mv}. 

По обобщённой формуле Пуассона--Йенсена из Предложения \ref{pr:2}, применённой к субгармоническим функциям $u, u_1,u_2$,   получаем
\begin{subequations}\label{PJ}
\begin{align}
	u(z_0) +\int_{D\setminus \{z_0\}} V_{n} \,d {\nu}_u &\overset{\eqref{f:PJ}, \eqref{cr:Vnh00}}{=}\int_{D\setminus D_0} u \,d \mu_n  ,
\tag{\ref{PJ}$u$}\label{Pju}\\ 
u_1(z_0) +\int_{D\setminus \{z_0\}} V_{n} \,d {\nu}_M^+ &\overset{\eqref{f:PJ},\eqref{cr:Vnh00}}{=}\int_{D\setminus D_0} u_1 \,d \mu_n  ,
\tag{\ref{PJ}$u_1$}\label{Pju1} \\
u_2(z_0) +\int_{D\setminus \{z_0\}} V_{n} \,d {\nu}_M^- &\overset{\eqref{f:PJ}, \eqref{cr:Vnh00}}{=}\int_{D\setminus D_0} u_2 \,d \mu_n  .
\tag{\ref{PJ}$u_2$}\label{Pju2}
\end{align}
\end{subequations}
Из условия $u\leq M=u_1-u_2$ на $D\setminus D_0$ для правых частей равенств \eqref{PJ} получаем
\begin{equation*}
	\int_{D\setminus D_0} u \,d \mu_n \leq \int_{D\setminus D_0} M \, d \mu_n=
	\int_{D\setminus D_0} u_1 \,d \mu_n- \int_{D\setminus D_0} u_2 \,d \mu_n,
\end{equation*}
 откуда по обобщённым формулам Пуассона--Йенсена \eqref{PJ}
\begin{equation}\label{uu12}
	u(z_0) +\int_{D\setminus \{z_0\}} V_{n} \,d {\nu}_u 	+\int_{D\setminus \{z_0\}} V_{n} \,d {\nu}_M^-
	\leq M(z_0) +\int_{D\setminus \{z_0\}} V_{n} \,d {\nu}_M^+.
\end{equation}
Напомним, что последовательность функций $V_n$, возрастает и стремится к $\widetilde{V}$ на $D$ поточечно. Тогда, очевидно, из \eqref{uu12} следует
\begin{equation}\label{uu122}
	u(z_0) +\int_{D\setminus \{z_0\}} V_{n} \,d {\nu}_u 	+\int_{D\setminus \{z_0\}} V_{n} \,d {\nu}_M^-
	\leq M(z_0) +\int_{D\setminus \{z_0\}} \widetilde{V} \,d {\nu}_M^+,
\end{equation}
где для правой части допускается и значение $+\infty$. Если этот интеграл действительно равен $+\infty$, то ввиду конечности 
интеграла \eqref{dMa} для $z=z_0\in {\dom M}$ 
\begin{equation*}
	+\infty=\int_{D\setminus \widetilde{D}} \widetilde{V} \,d {\nu}_M^+\overset{\eqref{cr:Vng}}{=}
	\int_{D\setminus \widetilde{D}} \widetilde{c}\cdot  v \,d {\nu}_M^+,
\end{equation*}
откуда, ввиду конечности первого интеграла в \eqref{Mv}, следует, что первый  интеграл в правой части   \eqref{{mest}v} также равен $+\infty$ и доказывать нечего. Поэтому далее предполагаем, что интеграл в правой части  \eqref{uu122} конечен. Применяя Теорему 
 Бепп\'о Л\'еви о монотонной сходимости интегралов  к левой части \eqref{uu122}, получаем 
\begin{equation}\label{itots}
	u(z_0) +\int_{D\setminus \{z_0\}} \widetilde{V} \,d {\nu}_u 	+\int_{D\setminus \{z_0\}} \widetilde{V}\,d {\nu}_M^-
	\leq M(z_0) +\int_{D\setminus \{z_0\}} \widetilde{V} \,d {\nu}_M^+.
\end{equation}
Здесь  в силу \eqref{cr:Vng}
\begin{equation*}
		\widetilde{V}= 
	\begin{cases}
	g_{\widetilde{D}}(\cdot, z_0)\quad &\text{на $D_0$},\\
	\max\bigl\{g_{\widetilde{D}}(\cdot, z_0),\;\widetilde{c}\cdot v \bigr\}\quad &\text{на $\widetilde{D}\setminus D_0$},\\
	\widetilde{c}\cdot v \quad &\text{на $D\setminus \widetilde{D}$}.
	\end{cases}
	\end{equation*}
		Из этих равенств применительно к \eqref{itots}, учитывая \eqref{intvnuM}, получаем
		\begin{multline*}
		u(z_0) +\int_{D_0} g_{\widetilde{D}}(\cdot, z_0)  \,d {\nu}_u  
		+\int_{D\setminus D_0} \widetilde{c}\cdot v \,d {\nu}_u 	\\		\leq 
		M(z_0) +		\int_{D_0} g_{\widetilde{D}}(\cdot, z_0)  \,d {\nu}_M  
		+\int_{\widetilde{D}\setminus D_0} \max\bigl\{g_{\widetilde{D}}(\cdot, z_0),\;\widetilde{c}\cdot v \bigr\}  \,d {\nu}_M  
		+\int_{D\setminus \widetilde{D}} \widetilde{c}\cdot v \,d {\nu}_M,
				\end{multline*}
	откуда, после отбрасывания  второго положительного слагаемого-интеграла в левой части и  деления  обеих частей на постоянную $\widetilde{c}$ из  \eqref{cz0}, с постоянной 
		\begin{equation*}
		C :=\frac{1}{\widetilde{c}}\overset{\eqref{cz0}}{=}
		\frac{b} {\inf_{z\in \partial D_0}  g_{\widetilde{D}}(z, z_0)}  >0
\end{equation*}
		вида  \eqref{cz0C} следует 
\begin{multline}\label{ittCMg}
C u(z_0) 	+\int_{D\setminus D_0}  v \,d {\nu}_u 	\leq 	\int_{D\setminus \widetilde{D}}  v \,d {\nu}_M	
\\	+		C\int_{D_0} g_{\widetilde{D}}(\cdot, z_0)  \,d {\nu}_M  
		+\int_{\widetilde{D}\setminus D_0} \max\bigl\{Cg_{\widetilde{D}}(\cdot, z_0),\; v \bigr\}  \,d {\nu}_M^+  +CM(z_0).
\end{multline}	
	В силу очевидного для положительных функций  неравенства
\begin{equation}\label{Cginv}
	\max\bigl\{Cg_{\widetilde{D}}(\cdot, z_0),\; v \bigr\}\leq Cg_{\widetilde{D}}(\cdot, z_0)+ v \quad \text{на $D\setminus D_0$}
\end{equation}
 из \eqref{ittCMg} получаем 
\begin{multline*} 
C u(z_0) 	+\int_{D\setminus D_0}  v \,d {\nu}_u 	\leq 	\int_{D\setminus D_0}  v \,d {\nu}_M	+\int_{\widetilde{D}\setminus D_0}
v \,d {\nu}_M^-
\\	+		C\int_{\widetilde{D}} g_{\widetilde{D}}(\cdot, z_0)  \,d {\nu}_M  
		+C\int_{\widetilde{D}\setminus D_0} g_{\widetilde{D}}(\cdot, z_0)  \,d {\nu}_M^-  +CM(z_0),
		 \end{multline*}	
		что и доказывает требуемое \eqref{mest}. 	Наконец, если $\widetilde{D}\Subset D$, то как второй интеграл в правой части \eqref{{mest}v} ввиду ограниченности $v$ на $\widetilde{D}\setminus D_0$, так и интегралы из   \eqref{{mest}C} ввиду \eqref{dMa} для $z_0\in \dom M$, участвующие в определении величины $\overline{C}_M$, конечны. Свойства положительной однородности и полуаддитивности для  $\overline{C}_M$ по $M$ --- следствие явного представления её в \eqref{{mest}C} как суммы интегралов по заряду
		$\nu_M$, мере  $\nu_M^-$ 				и значения $M(z_0)$, зависящим от $M$ подобным же образом.  
	\begin{remark}\label{remw+} Доказательство Основной Теоремы проходит без каких-либо существенных усложнений 
		и для тестовых  функций из Замечания \ref{rmmv}, т.\,е. с функциями  $v\overset{\eqref{df:mvb}}{\in} \sbh_0^+(D\setminus \clos D_0; \leq b)$ в п.~ (iii).
	\end{remark}
	
	\subsection{Субгармоническая мажоранта $M$} Приведем  здесь 
 \begin{proof}[ Теоремы \ref{th:1}]  Если $u(z_0)=-\infty$, то доказывать нечего. То же самое, если интегралы в правых частях 
\eqref{mest+}--\eqref{in:fz0} равны $+\infty$. Поэтому предполагаем, что $u(z_0)>-\infty$ и конечность этих интегралов.  Для частного случая  $\boldsymbol{-\infty}\neq M \in \sbh (D)$   Основной Теоремы, очевидно, 
$\nu_M^-=0$  и в правой части неравенства \eqref{mest} из \eqref{Mv} интегралы по мере $\nu_M^-$  равны нулю. По условию также $M(z_0)\neq -\infty$. Всегда можно подобрать  
регулярную  область $\widetilde{D}\Subset D$, к примеру, ограниченную  
конечным числом гладких жордановых дуг \cite[гл.~V, \S~4]{St}, для которой $D_0\Subset \widetilde{D}$. Таким образом, 
для любой тестовой  функции $v\overset{\eqref{sbh+}}{\in} \sbh_0^+(D\setminus D_0;\leq b)$ из неравенства \eqref{mest} получаем
\begin{equation}\label{ittCMg+}
\int_{D\setminus D_0}  v \,d {\nu}_u 	\leq 	\int_{D\setminus D_0}  v \,d {\nu}_M	 - C u(z_0) 	
+		C\int_{\widetilde{D}} g_{\widetilde{D}}(\cdot, z_0)  \,d {\nu}_M  		 +CM(z_0).
		 \end{equation}	
При этом выбор области $\widetilde{D}$ полностью обусловлен лишь взаимным расположением областей $D_0$ и $D$, т.\,е. в выборе постоянной $C$ в \eqref{cz0C} влияние области $\widetilde{D}$ можно заменить на зависимость от областей  $D_0$ и $D$, как для постоянной $C$ из \eqref{cz0C+}.
Возможность выбора постоянной  
\begin{equation}\label{CMg0}
	\overline{C}_M:= \int_{\widetilde{D}} g_{\widetilde{D}}(\cdot, z_0)  \,d {\nu}_M   +M(z_0)
\end{equation}
 как в  \eqref{cz0C+}  положительно однородной и полуаддитивной сверху по $M$ следует из  её явного вида \eqref{CMg0}. 
 Это и \eqref{ittCMg+} даёт \eqref{mest+} с постоянными $C, \overline{C}_M$ из \eqref{cz0C+}.
Из \eqref{mest+} для функции $u=\log |f|$ с функцией $f\in \Hol (D)\setminus \{0\}$, удовлетворяющей неравенству $\log |f|\leq M$ на $D\setminus D_0$ согласно \eqref{df:nZS}--\eqref{nufZ} ввиду $n_{\tt Z}\leq n_{\Zero_f}$ при $f({\tt Z})=0$ следует  \eqref{in:fz0}.  
\end{proof}

 \begin{proof}[ Следствия \ref{cor:1}] Всегда можно выбрать точку $z_0\in D_0$ так, что $u(z_0)\neq -\infty$. Из \eqref{est:u0} и  условия $v\in \sbh^+ (D\setminus D_0)$, по принципу максимума 
$b:=\sup_{z\in \partial D_0} v(z)<+\infty$ и, следовательно, $v\in \sbh_0^+(D\setminus D_0;\leq b)$. Тогда по Теореме \ref{th:1}
из конечности  левой части импликации \eqref{vnuum} для $u\leq M$ на $D\setminus D_0$ из  неравенства \eqref{mest+} получаем требуемую конечность правой части \eqref{vnuum}. Соответственно,
для $\log |f|\leq M$ на $D\setminus D_0$ из  \eqref{in:fz0} следует \eqref{vnuumf}.
\end{proof}

\section{Обратные теоремы}\label{ConverRes}

Результаты раздела дают некоторые формы  обращения Основной Теоремы и Теоремы \ref{th:1} на определённых  специальных подклассах
тестовых функций. 

\subsection{Обращение с финитными тестовыми функциями} 
Используем
\begin{definition}\label{df:fi} 
Тестовую фу\-н\-к\-ц\-ию $v \overset{\eqref{sbh+}}{\in}  \text{\rm sbh}_0^+(D\setminus D_0)$ из Определения  \ref{testv} называем {\it финитной,\/}  если для неё  справедливо усиление условия \eqref{resv0}: {\it существует подобласть $D_v\Subset D$, для  которой  $v\bigm|_{D\setminus D_v}=0$,}  т.\,е. вне $D_v\supset D_0$ функция $v$ тождественно равна нулю.
\end{definition}

\begin{theorem}\label{th:inver}  
Пусть $D$ --- область с неполярной границей,    $M\in \dsbh (D)$ --- нетривиальная функция с зарядом Рисса $\nu_M$,  $\nu\geq 0$ --- борелевская мера на  $D$,  $b>0$. 
Если существует постоянная $C\in \RR$, с которой   для всех финитных тестовых  функций $v{\in}  \text{\rm sbh}_0^+(D\setminus D_0;\leq b)$ выполнено неравенство  
\begin{equation}\label{estv+}
\int\limits_{D\setminus D_0}  v \,d {\nu} 		\leq	\int\limits_{D\setminus D_0}  v \,d {\nu}_M	+C,
\end{equation}
то  для каждой непрерывной функции\/  $r \colon D\to \RR$, удовлетворяющей условиям\/ 
\begin{subequations}\label{rd:}
\begin{align}
	0<r(z)< 	&\dist (z, \partial D) \quad\text{для всех $z\in D$,} 
	\tag{\ref{rd:}d}\label{rdD}\\
\Bigl\{z\in D \colon D\bigl(z,r(z)\bigr)\cap D^0\neq \varnothing \Bigr\}&\Subset D \quad\text{для любой подобласти $D^0\Subset D$}, 
\tag{\ref{rd:}c}\label{rdD1ad}
 \end{align}
\end{subequations}
 найдётся функция  $u\in \text{\rm sbh}(D)\setminus \{\boldsymbol{-\infty}\}$ с мерой Рисса $\nu_u\geq \nu$, для которой
\begin{equation}\label{in:hM}
u(z)\leq  M^{*r}(z):=\frac1{\lambda\bigl(D(r(z))\bigr)} \int\limits_{D(z, r(z))} M \,d\lambda \quad\text{для всех  $ z\in D$},
\end{equation}
где  $M^{*r}(z)$ ---  переменное усреднение функции  $M$ по кругам $D\bigl(z,r(z)\bigr)$. 

В частности, если функция  $M$ ещё и непрерывна, то можно подобрать  функцию 
  $u\in \text{\rm sbh}(D)\setminus \{\boldsymbol{-\infty}\}$ с мерой Рисса $\nu_u\geq \nu$   так, что   $u\leq M$  на $D$.
\end{theorem}
\begin{remark}\label{rrr} Условие  \eqref{rdD} на функцию $r$ естественно, поскольку только оно гарантирует существование 
усреднений $M^{*r}$ всюду на $D$, а ограничение  \eqref{rdD1ad}  довольно слабое. Например,  \eqref{rdD1ad} выполнено, если 
$\lim\limits_{D\ni z'\to z} r(z')=0$ для любой точки $z\in \partial D$, поскольку в этом случае  выполнено условие 
 $\lim\limits_{D\ni z\to \partial D} r(z)=0$ в смысле \eqref{0ev}--\eqref{est:u0}. Именно   из  ограничения \eqref{rdD1ad}  на $r$ легко следует, что для любой подобласти $D^0\Subset D$ найдётся подобласть $D_1 \Subset D$ cо свойствами $D^0\Subset D_1$ и $D\bigl(z, r(z)\bigr)\cap D^0=\varnothing$ для всех $z\in D\setminus D_1$. 
\end{remark}
\begin{proof} Для  меры $\nu$ на $D\supset D_0$ всегда можно  подобрать точку $z_0\in D_0\cap \dom M$, в которой для некоторого числа $r_0>0$ имеем соотношения
 \begin{equation}\label{dMa+}
		\left(\int_0^{r_0}\frac{\nu (z_0,t)}{t} \, dt<+\infty\right)\; \overset{\eqref{dMa}}{\Longleftrightarrow}	\;
		\left(\int_{D(z,r_0)} \log |z'-z_0| \, d  \nu (z')>-\infty\right).
		\end{equation}
Это обеспечивает существование   субгармонической функции $u_0\neq \boldsymbol{-\infty}$ с мерой Рисса в точности $\nu$
 и свойством $u_0(z_0)\neq -\infty$ --- см. п.~2 в подразделе \ref{2_3}. 

Далее нам временно потребуется ограниченность функции $M$ в окрестности точки $z_0$. Для этого пока преобразуем  её локально с сохранением условия 
\eqref{estv+}.  Выберем  число $r_0>0$ столь малым, что $D(z_0,3r_0)\Subset D_0$. Из представления  $M=u_+-u_-$ в  виде разности субгармонических функций $u_+, u_- \in \sbh(D)$ можно локально изменить значения функции $M$ в  $D(z_0,2r_0)\Subset D_0$:
		гармонически  продолжить интегралом Пуассона    функции $u_+$ и $u_-$ внутрь  круга $D(z_0,2r_0)$ --- обозначаем их
		как соответственно $u_+^o$ и $u_-^o$. Тогда функция 		$M^o:=u_+^o-u_- ^o\in \dsbh (D)$ уже ограничена в окрестности замкнутого круга 
		$\overline{D}(z_0,r_0)$, а \eqref{estv+} по-прежнему выполнено для   всех финитных  функций $v{\in}  \text{\rm sbh}_0^+(D\setminus D_0;\leq b)$.
Пока будем обозначать   функцию $M^o$ прежним символом $M$.

 Через $L_{\loc}^1(S)$ обозначаем множество всех локально интегрируемых по мере Лебега $\lambda$ функций на $S$ со значениями в $[-\infty, +\infty]$.  
Будет использована
\vskip 1mm
	{\sc Теорема C} {\large(}частный случай \cite[Теорема 6]{Kh07}{\large)}.
{\it Пусть\/ $M\in L^1_{\loc} (D)$, точка   $z_0\in D$,   $u_0 \in \sbh(D)$    с $u_0(z_0)\neq -\infty$.
Если  функция $M$ ограничена в открытой окрестности замыкания\/ 
$\clos D_1$ какой-нибудь подобласти\/ $D_1\Subset D$, содержащей точку $z_0$ и  существует постоянная\/ $C_0\in \RR$, с которой  
\begin{equation}\label{in:arsV}
	\int_{D} u_0 \,d \mu \leq \int_{D} M \,d \mu +C_0 
\quad\text{для любой  меры\/ $ \mu \in  J_{z_0}(D)$,}	
\end{equation}
 то  для каждой непрерывной функции\/  $r \colon D\to \RR$, удовлетворяющей условию\/ \eqref{rdD},
	 найдется субгармоническая на  $D$ функция\/ $w\neq \boldsymbol{-\infty}$, для которой
\begin{equation}\label{in:hMsh}
u_0+w\leq  M^{*r} \quad\text{на  $ D$}.
\end{equation}
}
В нашем случае роль области $D_1$ будет играть круг $D(z_0,r_0)$.

Кроме того, потребуется 
\begin{lemma}\label{pr:PJP} Пусть функция $M\in \dsbh(D)$ нетривиальная с зарядом Рисса $\nu_M$,  $z_0\in\dom M$,  функция $u_0\in \sbh (D)$ с мерой Рисса $\nu$,
 $u_0(z_0)\neq -\infty$,  а $V\in PJ_{z_0}(D)$ --- потенциал Йенсена  и $C_1\in \RR$. Если 
\begin{equation}\label{inmuM+}
	\int_{D\setminus \{z_0\}} V\, d \nu \leq \int_{D\setminus \{z_0\}} V \, d \nu_M +C_1, 
\end{equation}
то для меры Йенсена  $\mu\overset{\eqref{eq:mu}}{=}\mathcal P^{-1} (V)\in J_{z_0}(D)$ справедливо неравенство
\begin{equation}\label{inmuM+q}
	\int u_0\, d \mu \leq \int M \, d \mu +C_0, \quad \text{где $C_0=C_1-M(z_0)+u(z_0)$}.
\end{equation}
\end{lemma}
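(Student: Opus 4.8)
The plan is to recognize this lemma as a direct consequence of the generalized Poisson--Jensen formula \eqref{f:PJ} of Proposition~\ref{pr:2}, applied once to $u_0$ and twice to the two subharmonic constituents of $M$. The key observation is that the measure $\mu=\mathcal P^{-1}(V)$ is built precisely so that its potential $V_\mu=\mathcal P(\mu)$ coincides with $V$: since by Proposition~\ref{pr:1} the map $\mathcal P\colon J_{z_0}(D)\to PJ_{z_0}(D)$ of \eqref{con:P} is a bijection, $V=\mathcal P(\mu)=V_\mu$, so that \eqref{f:PJ} with this particular $\mu$ involves exactly the given potential $V$.

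First I would apply \eqref{f:PJ} to the function $u_0$, whose Riesz measure is $\nu$ and for which $u_0(z_0)\neq-\infty$ by hypothesis, obtaining
\begin{equation*}
\int_D u_0\,d\mu=u_0(z_0)+\int_{D\setminus\{z_0\}}V\,d\nu .
\end{equation*}
Next, writing $M=u_1-u_2$ with $u_1,u_2\in\sbh(D)$ and $\nu_{u_1}=\nu_M^+$, $\nu_{u_2}=\nu_M^-$ as in Subsection~\ref{2_3}, I note that $z_0\in\dom M$ forces $u_1(z_0)\neq-\infty$ and $u_2(z_0)\neq-\infty$ (by the defining criterion \eqref{dMa} of $\dom M$ applied to $\nu_M^+$ and $\nu_M^-$ separately), so \eqref{f:PJ} applies to each:
\begin{equation*}
\int_D u_1\,d\mu=u_1(z_0)+\int_{D\setminus\{z_0\}}V\,d\nu_M^+,\qquad
\int_D u_2\,d\mu=u_2(z_0)+\int_{D\setminus\{z_0\}}V\,d\nu_M^- .
\end{equation*}
Subtracting these and using the convention \eqref{intvnuM} for $\int V\,d\nu_M$ together with $M(z_0)=u_1(z_0)-u_2(z_0)$ yields
\begin{equation*}
\int_D M\,d\mu=M(z_0)+\int_{D\setminus\{z_0\}}V\,d\nu_M .
\end{equation*}

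Finally, I would feed the hypothesis \eqref{inmuM+} into the first identity:
\begin{equation*}
\int_D u_0\,d\mu=u_0(z_0)+\int_{D\setminus\{z_0\}}V\,d\nu
\leq u_0(z_0)+\int_{D\setminus\{z_0\}}V\,d\nu_M+C_1
=\int_D M\,d\mu+\bigl(C_1+u_0(z_0)-M(z_0)\bigr),
\end{equation*}
which is exactly \eqref{inmuM+q} with $C_0=C_1-M(z_0)+u_0(z_0)$ (the symbol $u$ in the statement evidently standing for $u_0$).

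The one delicate point is the subtraction in the middle step, where one must avoid an $\infty-\infty$ ambiguity between $\int_D u_1\,d\mu$ and $\int_D u_2\,d\mu$, equivalently between $\int V\,d\nu_M^+$ and $\int V\,d\nu_M^-$. This is resolved by noting that the hypothesis \eqref{inmuM+} already presupposes, through the convention \eqref{intvnuM}, that $\int_{D\setminus\{z_0\}}V\,d\nu_M$ is well defined, hence that at least one of $\int V\,d\nu_M^{\pm}$ is finite; the subtraction is then legitimate. In the degenerate subcase where this integral equals $+\infty$ both sides of \eqref{inmuM+q} are $+\infty$ and the inequality holds trivially, while the case $u_0(z_0)=-\infty$ is excluded by assumption. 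Apart from this bookkeeping the lemma is a purely formal rearrangement of the three Poisson--Jensen identities, so I expect no genuine obstacle beyond the integrability check.
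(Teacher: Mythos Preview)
Your argument is correct and coincides with the paper's own proof: both decompose $M=u_+-u_-$ with $u_\pm(z_0)\neq-\infty$ (from $z_0\in\dom M$), apply the generalized Poisson--Jensen formula \eqref{f:PJ} to $u_0$ and to each $u_\pm$, and then chain the resulting identities with the hypothesis \eqref{inmuM+}. Your additional remark on the $\infty-\infty$ bookkeeping is a welcome clarification that the paper leaves implicit.
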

Действительно, при условии $z_0\in \dom M$ функция $M$ представима в виде разности $M=u_+-u_-$ субгармонических функций $u_+, u_- \in \sbh(D)$ 
 с мерами Рисса соответственно $\nu_M^+,\nu_M^- \in \mathcal M^+(D)$, для которых ввиду \eqref{dMa}  имеем 
$u_+(z_0)\neq -\infty$ и $u_-(z_0)\neq -\infty$. К каждой из функций $u_+, u_- $ применима обобщённая формула Пуассона--Йенсена 
Предложения \ref{pr:2}, следовательно, она применима и к функции $M$. Тогда  для  меры Йенсена  $\mu\overset{\eqref{eq:mu}}{:=}\mathcal P^{-1}V$ 
имеем
 \begin{multline*}
	\int_{D} u_0 \,d \mu   \overset{\eqref{f:PJ}}{=}\int_{D\setminus \{z_0\}} V\,d {\nu} +u_0(z_0) \\
	\overset{\eqref{inmuM+}}{\leq} \int_{D\setminus \{z_0\}} V \, d \nu_M +C_1+u_0(z_0)
	\overset{\eqref{f:PJ}}{=} \int M \, d \mu -M(z_0)+C_1 +u_0(z_0),
\end{multline*}
что и требуется в \eqref{inmuM+q}.

Вернёмся непосредственно к доказательству Теоремы \ref{th:inver}.

Для области $D$ с неполярной границей  $\partial D\subset \CC_{\infty}$ всегда существует функция Грина $g_{D}(\cdot ,z_0)$ 
{\; \it  с полюсом в точке $z_0$.} Далее всюду  в нашем доказательстве 
\begin{equation*}
g:=g_D(\cdot, z_0) \quad \text{\it --- функция Грина для $D$ с полюсом $z_0\in D_0$.}
\end{equation*}
Здесь для нас важны только  следующие её свойства {\large(}см. \cite[4.4]{Rans}, \cite[3.7, 5.7]{HK}{\large)}:
\begin{enumerate}[{(g1)}]
	\item\label{g1} $\lim\limits_{z_0\neq z\to z_0} \dfrac{g(z )}{l_{z_0}(z)}\overset{\eqref{{nvz}INF}}{=}  1$ --- нормировка в точке $z_0$ \eqref{nvzn};
	\item\label{g2} $g\bigm|_{D\setminus \{z_0\}}\in \Har^+\bigl(D\setminus \{z_0\}\bigr)$ --- гармоничность и положительность в $D\setminus \{z_0\}$. 
	\end{enumerate}
В частности, из принципа максимума-минимума,  ввиду $z_0\in D_0\Subset D$, 
\begin{equation}\label{B0}
	0<\const_{z_0,D_0,D}:=B_0:=\sup_{z\in \partial D_0} g(z)<+\infty.
\end{equation}
Пусть $V\in PJ_{z_0}(D)$ --- {\it произвольный потенциал Йенсена.\/} Тогда ввиду (g\ref{g1})--(g\ref{g2}) 
  	\begin{equation*}
	\limsup_{D\ni z\to z_0} \frac{(V-g)(z)}{l_{z_0}(z)} \overset{\rm(g\ref{g1})}{\leq} 0,  \quad 
	(V-g)\bigm|_{D\setminus \{z_0\}} \overset{\rm(g\ref{g2})}{\in} {\sbh} \bigl(D\setminus \{z_0\}\bigr).	
	\end{equation*}
	Отсюда точка $z_0$ --- устранимая особенность для функции $V-g$ и, поскольку
		\begin{equation*}
	\limsup_{D\ni z'\to z} (V-g)(z')\leq \limsup_{D\ni z'\to z} V(z')  = 0 \quad	\text{для всех $z\in \partial D$,}
	\end{equation*}
		для функции $V-g\in \sbh(D)$ по принципу максимума $V-g\leq 0$ на $D$. Отсюда
				\begin{equation}\label{es:g}
		V\leq g \quad \text{на $D$}, \qquad 
			V\overset{\eqref{B0}}{\leq} B_0 \quad \text{на $\partial D_0$}. 	
		\end{equation}
		Следовательно, для рассматриваемой в открытой окрестности $D\setminus D_0$  функции 
		\begin{equation*}
		v:=\frac{b}{B_0} \, V	\in \sbh_0^+\bigl(D\setminus D_0;\leq b\bigr)
		\end{equation*}
		справедливо неравенство \eqref{estv+}. Умножая обе его части на $B_0/b$,  получаем 
		\begin{equation*}
		\int_{D\setminus D_0} V \, d\nu  	\leq \int_{D\setminus D_0} V \, d \nu_M +\frac{B_0}{b} \,C 
				\quad\text{\it для всех $V\in PJ_{z_0}(D)$.}
		\end{equation*}
		Это неравенство можно переписать в виде
				\begin{multline}\label{bQ}
			\int_{D\setminus \{z_0\}} V \, d\nu  \leq \int_{D\setminus \{z_0\}} V \, d \nu_M \\
						+\frac{B_0}{b} \,C  + \int_{D_0\setminus \{z_0\}} V \, d\nu  +\int_{D_0\setminus \{z_0\}} V \, d \nu^-_M 
												\overset{\eqref{es:g}}{\leq}  \int_{D\setminus \{z_0\}} V \, d \nu_M \\
						+\left(\frac{B_0}{b} \,C  + \int_{D_0\setminus \{z_0\}} g \, d\nu  +\int_{D_0\setminus \{z_0\}} g \, d \nu^-_M\right) 
					\quad\text{\it для всех $V\in PJ_{z_0}(D)$.}
\end{multline}
			Два последних интеграла здесь конечны ввиду \eqref{dMa+} и 	$z_0\in D_0\cap \dom M$, обеспечивающем выполнение \eqref{dMa}.  Кроме того, они не зависят от $V\in PJ_{z_0}(D)$. Таким образом, с постоянной $C_1$, равной  значению <<большой>> скобки в конце \eqref{bQ},  выполнено  
		\eqref{inmuM+}	 для любого потенциала $V\in PJ_{z_0}(D)$. Отсюда по Лемме  \ref{pr:PJP} имеет место \eqref{inmuM+q} для любой меры Йенсена 
		$\mu\in J_{z_0}(D)$. Следовательно, выполнено  условие   \eqref{in:arsV} Теоремы C и 		найдётся  функция  $w\in \sbh (D)$, для которой имеем
		\eqref{in:hMsh}. При этом c мерой Рисса $\nu_w$ функции $w$, очевидно, выполнено  неравенство $\nu+\nu_w\geq \nu$ на $D$. Следовательно, функция $u^o:=u_0+w$ с мерой Рисса $\nu_{u^o}=\nu+\nu_w$ --- требуемая в \eqref{in:hM}, но пока для функции $M=M^o$, отличающейся от $M$ в круге 
			$D(z_0,2r_0)$. Вернёмся к прежним обозначениям $M\leq M^{o}$.   Для {\it непрерывной\/} функции  $r$ функции  $M^{*r}$, $(M^o)^{*r}$ также непрерывны в $D$, поскольку обе они из  класса  $L_{\loc}^1(D)$. В то же время субгармоническая функция $u^o\neq \boldsymbol{-\infty}$ ограничена сверху в $D(z_0,3r_0)\Subset D$. Следовательно,  можно выбрать достаточно большую  постоянную $C_3\geq 0$ так, что 
			$u_0:=u^o-C_3\leqslant (M^o)^{*r}$ на $D$  с мерой Рисса $\nu_{u_0}=\nu_{u^o}\geq \nu$. По условию  \eqref{rdD1ad} и Замечанию \ref{rrr}  найдётся подобласть $D_1\Subset D$, включающая в себя $D(z_0,r_0)$,  для которой по построению $M^o$ и определению усреднения выполнены равенства $(M^o)^{*r}=M^{*r}$, а значит и неравенство  $u_0\leqslant M^{*r}$ на $D\setminus D_1$. Поскольку функция $M^{*r}$ непрерывна на $D$, а $u_0$ ограничена сверху на $D_1$, можно выбрать достаточно большую  постоянную $C_4\geq 0$ так, что $u:=u_0-C_4\leq M^{*r}$ на $D$ с мерой Рисса $\nu_u=\nu_{u_0}\geq \nu$, что и требовалось.
			Непрерывная функция  $M$ изначально локально ограничена снизу, что позволяет избежать в доказательстве промежуточного использования функции $M^o$. Кроме того, функция $M$ локально равномерно непрерывна, что позволяет  выбрать непрерывную функцию $r$, удовлетворяющую \eqref{rd:}, с которой $M^{*r}\leq M+1$ на $D$,  и  заменить  правую часть в \eqref{in:hM} на $M$.
			
\end{proof}

\begin{remark}\label{cify} Проверку равномерного по $v$ условия, требующего выполнения  неравенства \eqref{estv+}  в Теореме \ref{th:inver},  можно ослабить в направлении  сужения класса тестовых функций. Конкретнее, для  произвольного множества  $F$ функций  на открытых  множествах из $\CC_{\infty}$ со значениями в $\RR$
используем обозначение $C^{\infty}\cap F$ для всех бесконечно дифференцируемых функций из $F$. 
\begin{addition}[{\rm к Теореме \ref{th:inver}}]\label{add:1} В Теореме\/ {\rm \ref{th:inver}} достаточно требовать, чтобы  с некоторой постоянной $C\in \RR$ 
неравенство  \eqref{estv+}  выполнялось для всех тестовых финитных  функций $v\in C^{\infty} \cap\text{\rm sbh}_0^+(D\setminus D_0;\leq b)$. 
\end{addition}
\begin{proof} Пусть   $v \in  \text{\rm sbh}_0^+(D\setminus D_0;\leq b)$ --- {\it произвольная финитная\/} тестовая функция. По Определениям
\ref{testv}, \ref{df:fi}  найдутся открытые множества  ${\mathcal O}_0, {\mathcal O}$ в $\CC_{\infty}$, обладающие свойствами:  
 $D_0\Subset {\mathcal O}_0 \Subset  {\mathcal O}\Subset D$, функция  $v$ субгармоническая в $D\setminus {\mathcal O}_0$ и 
тождественно равна нулю в $D\setminus \mathcal O$.  В частности, положим 
\begin{equation*}
	\e_0:=\dist ({\mathcal O}_0, \CC_{\infty}\setminus D_0 )>0, \quad
	\e_o:= \dist ({\mathcal O}, \CC_{\infty}\setminus D )>0, \quad \e^o:= \min \{\e_0,\e_o \}>0.
\end{equation*}
Рассмотрим  функцию $a\colon (0,+\infty)\to [0,+\infty)$ класса $C^{\infty}$ с $\supp a\Subset (0,1)$  и нормировкой $\int_0^{+\infty} a(x)\,dx =1$, а также 
 меры $\alpha_{\e}$, определяемые плотностями
\begin{equation*}
	\,d\alpha_{\e} (z):=\frac{1}{\e^2}\,a\bigl(|z|/\e\bigr)\,d\lambda (z), \quad 0<\e< \e^o.
\end{equation*}
Как известно \cite[2.7]{Rans}, \cite[3.4.1]{HK},  для убывающей последовательности строго положительных чисел $\e_n\to 0$ 
последовательность субгармонических бесконечно дифференцируемых функций-свёрток  
$	v_n:=v*\alpha_{\e_n}$, убывая по $n$, поточечно стремиться к функции $v$. В частности, $v\leq v_n$ на $D\setminus D_0$. При этом, поскольку $\partial D_0 $ --- компакт, величины $\sup_{z\in \partial D_0}v_n=:b_n$ стремятся к $b$ при $n\to +\infty$.  
Пусть зафиксировано произвольное число $b'>b$. По построению, начиная с некоторого $n$, функции $v_n\in  {\sbh}_0^+(D\setminus D_0;\leq b')$ {\it тестовые финитные.\/}
  По условию Дополнения найдётся число $C'$, с которым 
\begin{equation*}
\int\limits_{D\setminus D_0}  v_n \,d {\nu} 		\leq	\int\limits_{D\setminus D_0}  v_n \,d {\nu}_M	+C'
\end{equation*}	
  для всех построенных функций $v_n$ при всех  $n\geq n_0:=\const_{\e^o,b',v}$. 
	Отсюда по  разложению Хана--Жордана	для заряда Рисса $\nu_M	=\nu_M^+-\nu_M^-$
	\begin{equation*}
\int\limits_{D\setminus D_0}  v_n \,d ({\nu}+\nu_M^-) 		\leq	\int\limits_{D\setminus D_0}  v_n \,d {\nu}_M^+	+C'
\quad\text{для всех $n\geq n_0$}.
\end{equation*}
	Следовательно, ввиду $v\leq v_n$ на $D\setminus D_0$, получаем 
		\begin{equation*}
\int\limits_{D\setminus D_0}  v \,d {\nu} 		\leq	\int\limits_{D\setminus D_0}  v_n \,d {\nu}_M^+ - 
\int\limits_{D\setminus D_0}  v \,d \nu_M^-	+C'\quad\text{для всех $n\geq n_0$}.
\end{equation*}	
Устремляя  в первом интеграле справа  $n$ к $+\infty$, 	по Теореме  Бепп\'о Л\'еви  имеем 
	\begin{equation*}
\int\limits_{D\setminus D_0}  v \,d {\nu} 		\leq	\int\limits_{D\setminus D_0}  v \,d {\nu}_M^+ - 
\int\limits_{D\setminus D_0}  v \,d \nu_M^-	+C'=
\int\limits_{D\setminus D_0}  v \,d {\nu}_M+C'.
\end{equation*}	
В силу произвола в выборе  финитной функции  $v{\in}  \text{\rm sbh}_0^+(D\setminus D_0;\leq b)$ 
неравенство \eqref{estv+}  с постоянной $C:=C'$ выполнено для всех таких $v$, что и нужно. 
\end{proof}
\begin{addition}[{\rm к Теореме \ref{th:inver}}]\label{add:2}  В случае субгармонической  функции  
 $M\in\sbh(D)\setminus \{\boldsymbol{-\infty}\}$ наряду с Дополнением\/ {\rm \ref{add:1}} в Теореме\/ {\rm \ref{th:inver}} достаточно требовать, чтобы  функция $r$ с условиями\/  \eqref{rd:} была лишь локально отделённой от нуля снизу в  том смысле, что для любого $z\in D$ найдутся числа $t_z,c_z >0$, для которых 	$r(z')\geq c_z$  	для всех  $z'\in D(z,t_z)\Subset D$.

\end{addition}
\begin{proof} Элементарно, из соображений, использующих компактность, например, с использованием исчерпания области $D$ относительно компактными подобластями, устанавливается
\begin{lemma}\label{l:dz}
Для любой отделённой от нуля снизу функции $r$ на $D$, удовлетворяющей условиям\/ \eqref{rd:}, найдется непрерывная, даже бесконечно дифференцируемая,
функция $r'\leq r$, по-прежнему удовлетворяющая условиям    \/ \eqref{rd:}.
\end{lemma}
Применяя Теорему \ref{th:inver} с непрерывной  функцией $r'$ вместо $r$ строим требуемую $u\leq {M}^{*r'}\leq M^{*r}$, где при последнем переходе использовано возрастание  усреднений в круге субгармонической функции $M$ по радиусу   круга. 
\end{proof}
\end{remark}

\begin{corollary}\label{holD} Пусть в условиях Теоремы\/ {\rm \ref{th:inver}} функция $M$ непрерывна, а мера
 $\nu\overset{\eqref{df:nZS}}{:=}n_{\tt Z}$ --- считающая мера некоторой последовательности точек\/  ${\tt Z}=\{{\tt z}_k\}_{k=1,2,\dots}\subset D$ без точек сгущения в $D$, т.\,е. условие \eqref{estv+}
заменяется на эквивалентное  неравенство  
\begin{equation*}\label{estv++}
\sum_{{\tt z}_k\in D\setminus D_0}  v \bigl({\tt z}_k\bigr) 		\leq	\int\limits_{D\setminus D_0}  v \,d {\nu}_M	+C\quad
\text{для всех финитных   $v\in   C^{\infty}\cap \text{\rm sbh}_0^+(D\setminus D_0;\leq b)$.}
\end{equation*}
Для единообразия  формулировки  предполагаем, что $D\subset \CC$, т.\,е.~$\infty \notin D$.  

Тогда  для любого числа $\e>0$ найдётся ненулевая функция $f\in \Hol (D)$, для которой  $ f({\tt Z})=0$ и выполнены неравенства
\begin{equation}\label{efM}
	\log \bigl|f(z)\bigr|\leq  \frac{1}{\pi r^2} \int_{D(r)}M(z+z') \,d\lambda(z') +
\log \frac{1}{r}+(1+\varepsilon )\log\bigl(1+|z|+r\bigr) 
\end{equation}
 	в каждой  точке $z\in D$ с любым числом $r$, удовлетворяющим условию 
\begin{equation}\label{0r1}
	0<r<\dist (z, \CC_{\infty}\setminus D). 
\end{equation}
\end{corollary}
\begin{proof} По Теореме  \ref{th:inver} и Дополнениям  \ref{add:1}--\ref{add:2} существует функция $u\in \sbh(D)\setminus \{\boldsymbol{-\infty}\}$ с мерой Рисса $\nu_u\geq n_{\tt Z}$, удовлетворяющая неравенству $u\leq M$ на $D$. Пусть $f_{\tt Z}\in \Hol (D)$ --- некоторая функция с последовательностью нулей $\Zero_{f_{\tt Z}}=\tt Z$, которая всегда существует по классической Теореме  Вейерштрасса. Тогда для  нетривиальной функции $w:=u-\log |f_{\tt Z}|\in \dsbh (D)$  её заряд Рисса 
$\nu_w=\nu_u- n_{\tt Z} \geq 0$ положителен, т.\,е.  $\nu_w$ --- мера Рисса уже субгармонической функции $w\in \sbh(D)\setminus \{\boldsymbol{-\infty}\}$.
При этом по построению 
\begin{equation}\label{infZD}
\log |f_{\tt Z}|+w\leq M \quad \text{на $D$}.
\end{equation}

	Усредним по кругам $D(z,r)$ по мере Лебега $\lambda$ обе части \eqref{infZD}:
	\begin{equation*}
		\frac{1}{\lambda \bigl(D(r)\bigr)}\int_{D(z,r)}\log |f_{\tt Z}| \, d\lambda
		+\frac{1}{\lambda \bigl(D(r)\bigr)}\int_{D(z,r)} w \, d\lambda
		\leq \frac{1}{\lambda \bigl(D(r)\bigr)}\int_{D(z,r)}M \, d\lambda
	\end{equation*}
	 для всех $z\in D$ и $0<r<\dist (z, \CC_{\infty}\setminus D)$.   
	Поскольку субгармонические функции не превышают их усреднений по кругам, последнее соотношение  даёт
	\begin{equation}\label{estf+}
		\log \bigl|f_{\tt Z}(z)\bigr| 		+\frac{1}{\lambda \bigl(D(r)\bigr)}\int_{D(z,r)} w \, d\lambda
		\leq \frac{1}{\lambda \bigl(D(r)\bigr)}\int_{D(z,r)}M \, d\lambda \quad\text{для всех $z\in D$.}
	\end{equation}
\begin{lemma}[{\cite[Theorem 1]{KhB15}, \cite[Теорема 1]{BaiKha16}, \cite[Теорема 3]{BaiKha16_mz}}]\label{l:hmin} Для любой функции $w\in \sbh(D)\setminus \{\boldsymbol{-\infty}\}$ и  любого числа $\e>0$ найдётся ненулевая функция $f_{\e}\in \Hol (D)$, для которой 
\begin{equation}\label{logfr}
	\log \bigl|f_{\e}(z)\bigr|\leq \frac{1}{\lambda \bigl(D(r)\bigr)}\int_{D(z,r)} w \, d\lambda
+
 \log \frac{1}{r}+(1+\varepsilon )\log\bigl(1+|z|+r\bigr) 
	\end{equation}
	во всех точках $z\in D\subset \CC$ при любых $r$, удовлетворяющих условиям  \eqref{0r1}.
\end{lemma}
	
По Лемме \ref{l:hmin}  для всех $z\in D$ и всех   $r$ из  \eqref{0r1} согласно \eqref{estf+}  имеем
\begin{equation*}\label{estfZ} 
		\log \bigl|f_{\tt Z}(z)\bigr| 	 +\log \bigl|f_{\e}(z)\bigr|\leq 
		\frac{1}{\lambda \bigl(D(r)\bigr)}\int_{D(z,r)}M \, d\lambda 	+ 	
 \log \frac{1}{r}+(1+\varepsilon )\log\bigl(1+|z|+r\bigr) ,
	\end{equation*}
	и функция $f:=f_{\tt Z}f_{\e}\neq 0$ --- требуемая, поскольку  выполнено  \eqref{efM} и $f_{\tt Z}({\tt Z})=0$.
\end{proof}
\begin{remark}\label{r:tim} Следствие \ref{holD} остаётся в силе и при $\infty \in D\subset \CC_{\infty}$, если 
в последнем слагаемом в правой части \eqref{efM} в подлогарифменном выражении заменить  величину $|z|$ на $\frac{1}{|z-z_0|}$,  где $z_0\notin D$ --- произвольная точка в $\CC\setminus D$. 

Иногда переменные $r$ и $|z|$ в последних двух слагаемых в правой части \eqref{logfr} бывает удобно разделить. Один из вариантов
даётся  неравенством
\begin{equation*}
	\log \frac{1}{r}+(1+\varepsilon )\log\bigl(1+|z|+r\bigr)\leq \log \frac{(1+r)^{1+\e}}{r} +	 (1+\varepsilon )\log\bigl(1+|z|\bigr).
\end{equation*}
\end{remark}

\subsection{Обращения с более узкими классами тестовых функций}\label{an_cklf} 
\subsubsection{\underline{Обращение с функциями Грина}}\label{Grf} 

В теореме обращения этого пункта используются лишь продолженные функции Грина специальной системы 
относительно компактных регулярных подобластей области\footnote{Здесь уже, в отличие от Теоремы \ref{th:inver}, область $D\subset \CC_{\infty}$ произвольная.} $D$, содержащих $D_0$ с фиксированным полюсом $z_0\in D_0$.
 Отметим, что каждая такая {\it функция Грина --- тестовая финитная функция для области\/ $D$ вне подобласти\/ $D_0$ }
по Определениям \ref{testv}, \ref{df:fi}  согласно Предложению \ref{PJ=test} и его реализации в Примере \ref{exp2}.

\begin{definition}[{\rm {\large(}см. \cite[Определение 1]{Kh07}, \cite[Определение 1]{KhS09}{\large)}}]\label{opexd} 
Систему {\it регулярных\/} областей 
			$\mathcal U_{D_0} (D)\subset \{D'\Subset D \colon 	\; D_0\subset D' \}$
		 называем {\it  регулярной оптимально исчерпывающей в\/} $D$ с центром $D_0$, если для любых двух областей $D_1$ и $D_2$ 
при $D_0\subset D_1\Subset D_2\subset D$ выполнены два    условия:

\begin{enumerate}[{1)}]
	\item
 {\it  можно подобрать область\/ $D'\in \mathcal U_{D_0}(D )$ так, что\/  $D_1\Subset D' \Subset D_2$ и каждая непустая ограниченная компонента связности дополнительного множества\/ ${\mathbb C}_{\infty} \setminus D'$ пересекает дополнение\/ ${\mathbb C}_{\infty} \setminus D_2$};

\item
 {\it для любой области $D'\in  \mathcal U_{D_0}(D )$
найдется область $D'' \in  \mathcal U_{D_0}(D )$, с которыми 
$D_1\Subset D'' \Subset D_2$  и объединение 
$D'' \cup D'$ также принадлежит\/ $\mathcal U_{D_0}(D )$};
\end{enumerate}
и, кроме того,  эта система {\it условно инвариантна относительно сдвига в\/} $D$, т.\,е. из $D' \in \mathcal U_{D_0} (D )$, 
$z\in {\mathbb C}$ и $D_0\subset D'+z \Subset \Omega$ следует, что $D'+z \in \mathcal U_{D_0} (D )$. 
\end{definition}

Простым  примером  регулярной оптимально исчерпывающей системы областей 
 может служить {\it специальная система $\mathcal U_{D_0}^d(D)$ всевозможных связных объединений $D'\supset D_0$ 
конечного числа кругов $D(z,t)\Subset D$, исключая те области $D'$, в дополнении ${\mathbb C}_{\infty} \setminus D'$ которых есть изолированные  точки}.   С такими же исключениями круги в этом примере можно заменить на относительно компактные в $D$ всевозможные, вообще говоря,  с несвязной границей,  $n$-угольники или, более общ\'о, 
односвязные подобласти \cite[Theorems 4.2.1--4.2.2]{Rans}, \cite[{2.6.3}]{HK} какого-либо специального типа.  

\begin{theorem}\label{th3} Пусть   $M=M_+-M_-\in \dsbh (D)$  с зарядом Рисса $\nu_M$, где $M_+\in \sbh (D)\cap C(D)$ и 
$M_-\in \sbh(D)$, а также $z_0\in D\cap \dom M \Subset D$.  

Пусть $\nu\geq 0$ --- борелевская мера на  $D$\/ и в точке $z_0$ для меры $\nu$ выполнено условие типа \eqref{dMa}, т.\,е.  
при некотором  $r_0>0$, для которого   $D(z_0,r_0)\Subset D$,  конечен один, или каждый,  из интегралов в  \eqref{dMa+}.
Пусть\/ $\mathcal U_{D_0} (D)$ --- 	  {\it  регулярная оптимально исчерпывающая  система областей в\/} $D$ с центром $D_0\subset D$ и $z_0\in D_0$. 
Пусть  с некоторой постоянной $C\in \RR$  выполнены неравенства
\begin{equation}\label{estv+g}
\int\limits_{ D \setminus \{z_0\}}  g_{D'}(\cdot ,z_0) \,d {\nu} 		\leq	\int\limits_{D\setminus \{z_0\}} g_{D'}(\cdot ,z_0) \,d {\nu}_M	+C
\text{ для всех $D'\in \mathcal U_{D_0} (D)$}.
 \end{equation}
Тогда  найдётся функция  $u\in \text{\rm sbh}(D)\setminus \{\boldsymbol{-\infty}\}$ с мерой Рисса $\nu_u\geq \nu$, удовлетворяющая ограничению   $u\leq M$ на $D$.
\end{theorem}
\begin{proof}  Пусть  $\nu_{M_+}$ и $\nu_{M_-}$ --- меры Рисса соотв. функций ${M_+}$ и ${M_-}$. Равномерную по $C$ серию неравенств  \eqref{estv+g}  тогда можно записать в обозначении $\nu_1:=\nu +\nu_{M_-}$ в виде
\begin{equation}\label{estv+g+}
\int\limits_{ D \setminus \{z_0\}}  g_{D'}(\cdot ,z_0) \,d {\nu}_1 		\leq	\int\limits_{D\setminus \{z_0\}} g_{D'}(\cdot ,z_0) \,d \nu_{M_+}	+C
\text{ для всех $D'\in \mathcal U_{D_0} (D)$},
 \end{equation}
где $\nu_1, \nu_{M_+}\in \mathcal M^+(D)$ --- уже {\it положительные меры.\/}  
Очевидно, существует какая-нибудь субгармоническая в $D$ функция $u_1\neq \boldsymbol{-\infty}$ с мерой Рисса $\nu_1$.  Для её меры Рисса $\nu_1$,  
ввиду  условия $z_0\in \dom M $, т.\,е. условия  \eqref{dMa}, а также  условия \eqref{dMa+} на $z_0$, выполнено условие  \eqref{dMa+} с  заменой $\nu$ на $\nu_1$. Следовательно,  $M_-(z_0)\neq -\infty$ и  обязательно $u_1(z_0)\neq -\infty$.  Далее потребуется 
 вариации утверждений из\footnote{К сожалению,  в формулировке Основной теоремы из нашей работы \cite{Kh07}, на промежуточном этапе доказательства которой и основано  \cite[Теорема (основная)]{KhS09},  
	допущена досадная опечатка в знаках $\pm$. Так, в её формулировке  (2.11) из п.~(h1) 
		должно выглядеть в точности как \eqref{est:g}.  Дальнейший комментарий --- в сноске к \cite[Теорема (основная)]{KhS09}.} \cite[Основная Теорема, Теорема 6]{Kh07}:
\vskip 1mm
	{\sc Теорема D} {\large(}частный случай  \cite[Теорема (основная)]{KhS09}{\large)}.
{\it Пусть функция\/  $M \in \sbh(D)$  с мерой Рисса $\nu_{M}$ ограничена снизу в некоторой окрестности замыкания   $\clos D_0$, 
 $u\in \sbh(D)$ --- субгармоническая функция с мерой Рисса $\nu$ на $D$ и $u(z_0)\neq -\infty$,  система областей\/ $\mathcal U_{D_0} (D)$ --- 	регулярная оптимально исчерпывающая для $D$ с центром $D_0\ni z_0$. Если имеет место соотношение 
\begin{equation}\label{est:g}
-\infty < \inf_{D' \in \mathcal U_{D_0}(D )}
\left(-\int_{D \setminus \{z_0\}} g_{D'}(\cdot  , z_0) \, d \nu_u+
\int_{D\setminus \{z_0\}} g_{D'} (\cdot , z_0 )\, d \nu_M\right),   
\end{equation}  
то  для любой непрерывной функции\/  $r \colon D\to \RR$, удовлетворяющей условию\/ 
\eqref{rdD},  найдётся субгармоническая в\/ $D$ функция\/ $v\neq \boldsymbol{-\infty}$, гармоническая в некоторой окрестности точки $z_0$, с которой   $u +v\overset{\eqref{in:hM}}{\leqslant} M^{*r}$ на $D$. При этом  если функция $M$ ещё и  непрерывная в $D$, то переменное усреднение $M^{*r}$ в правой части последнего неравенства  можно заменить на саму функцию $M$.
}

\begin{addition}[{\rm к Теореме D}]\label{add:D} Условие Теоремы\/ {\rm D}  об ограниченности снизу функции $M\in \sbh (D)\setminus \{\boldsymbol{-\infty}\}$  в  окрестности замыкания $\clos D_0\Subset D$  при дополнительном ограничении \eqref{rdD1ad} на функцию $r$
можно заменить на более слабое условие $M(z_0)\neq -\infty$.
\end{addition}
\begin{proof}[Дополнения \ref{add:D}]  Пусть для произвольной субгармонической функции $M$ с $M(z_0)\neq -\infty$ выполнено \eqref{est:g}. В эквивалентной формулировке это означает, что найдётся число  $C\in \RR$, с  которым 
\begin{equation}\label{estv+g+ad}
\int\limits_{ D \setminus \{z_0\}}  g_{D'}(\cdot ,z_0) \,d {\nu} 		\leq	\int\limits_{D\setminus \{z_0\}} g_{D'}(\cdot ,z_0) \,d {\nu}_{M}	+C
\text{ для всех $D'\in \mathcal U_{D_0} (D)$},
 \end{equation}  
Всегда можно выбрать некоторую регулярную подобласть $D^0 \Subset D$, включающую в себя $D_0\Subset D^0$. Рассмотрим   новую функцию $M_0$, 
совпадающую с $M$ на $D\setminus D^0$ и  заданную как гармоническое продолжение функции $M\bigm|_{\partial D^0}$ внутрь области $D^0$.
Тогда \cite[Теорема 2.18]{HK} $M_0\in \sbh (D)$, $M\leq M_0$ на $D$ и, очевидно, функция $M_0$ ограничена снизу в окрестности $D^0$ замыкания $\clos D_0$. 
Из  классической формулы Пуассона--Йенсена \eqref{f:PJgo} применительно  к $M_0$ и $M$
\begin{multline*}
\int_{D\setminus \{z_0\}} g_{D'}(\cdot ,z_0) \,d {\nu}_{M} \overset{\eqref{f:PJgo}}{=}
	\int_{D'} M\,d \omega_{D'}(z_0,\cdot) -M(z_0) \\
	\leqslant \int_{D'} M_0\,d \omega_{D'}(z_0,\cdot) -M(z_0)
	\overset{\eqref{f:PJgo}}{=} 
	\int_{D\setminus \{z_0\}} g_{D'}(\cdot ,z_0) \,d {\nu}_{M_0}+M_0(z_0)-M(z_0)
\end{multline*}
для любой области  $D'\in \mathcal U_{D_0} (D)$, где $\nu_{M_0}$ --- мера Рисса функции $M_0$.  Из \eqref{estv+g+ad}
\begin{equation*}
\int\limits_{ D \setminus \{z_0\}}  g_{D'}(\cdot ,z_0) \,d {\nu} 		\leq	\int\limits_{D\setminus \{z_0\}} g_{D'}(\cdot ,z_0) \,d {\nu}_{M_0}	+C_0
\text{ для всех $D'\in \mathcal U_{D_0} (D)$},
 \end{equation*} 
где $C_0:=C+M_0(z_0)-M(z_0)\in \RR$. Следовательно, по Теореме D существует функция $v\in\sbh(D)$, с которой $u+v\leq M_0^{*r}$ на $D$, где $M_0^{*r}$ --- непрерывная функция на $D$, как, впрочем,  и функция $M^{*r}$.  При этом  из  ограничения \eqref{rdD1ad}  на $r$ следует, что найдётся подобласть $D_1 \Subset D$ cо свойствами $D^0\Subset D_1$ и $D\bigl(z, r(z)\bigr)\cap D^0=\varnothing$ для всех $z\in D\setminus D_1$. 
По построению $M_0$ имеем $M_0^{*r}=M^{*r}$ на $D\setminus D_1$ и, в силу непрерывности этих функций на $D$, существует постоянная  $C_0\geq 0$, для которой $M_0^{*r}\leq M^{*r}+C_0$ уже всюду на $D$. Таким образом, $u+(v-C_0)\leq M^{*r}$  на $D$. Тем самым Дополнение \ref{add:D} доказано. 
\end{proof}

Продолжим доказательство Теоремы \ref{add:D}. По Теореме D с Дополнением \ref{add:D}, применённым к функциям $u_1$ и {\it непрерывной\/} функции 
$M_+$ вместо соотв. $u$ и $M$, ввиду \eqref{estv+g+} найдётся функция $v \in \sbh (D)$, гармоническая в окрестности точки $z_0$, с которой $u_1+v\leq M_+$ на $D$. По построению   $u_1\in \sbh (D)$ с  мерой Рисса  $\nu_1:=\nu +\nu_{M_-}$. Следовательно,  мера Рисса функции $u_0:=u_1-M_-$ --- это мера $\nu$, т.\,е. существует функция $u:=u_0+v \in \sbh(D)$ с мерой Рисса $\nu_u\geq \nu$, для которой $u\leq M_+-M_-=M$ на $D$, что и требовалось.
\end{proof}

\begin{corollary}\label{holDg} Пусть в условиях Теоремы\/ {\rm \ref{th3}}  мера
 $\nu\overset{\eqref{df:nZS}}{:=}n_{\tt Z}$ --- считающая мера некоторой последовательности точек\/  ${\tt Z}=\{{\tt z}_k\}_{k=1,2,\dots}\subset D$ без точек сгущения в $D$, т.\,е. $z_0\notin {\tt Z}$, а  условие \eqref{estv+g}
заменяется на  неравенства  
\begin{equation*}
\sum\limits_{ {\tt z}_k\in D' }  g_{D'}({\tt z}_k ,z_0)  		\leq	\int\limits_{D\setminus \{z_0\}} g_{D'}(\cdot ,z_0) \,d {\nu}_M	+C
\text{ для всех $D'\in \mathcal U_{D_0} (D)$}.
 \end{equation*}
Тогда  в предположении, что $D\subset \CC$, т.\,е.~$\infty \notin D$,  для любого числа $\e>0$ найдётся ненулевая функция $f\in \Hol (D)$, для которой  $ f({\tt Z})=0$ и выполнены неравенства \eqref{efM} в каждой  точке $z\in D$ с любым числом $r$, удовлетворяющим условию \eqref{0r1}.
При $\infty \in D$  изменения как  в  Замечании\/ {\rm  \ref{r:tim}}.
\end{corollary}
Выводится из Теоремы  \ref{th3} так же, как  Следствие \ref{holD}  из  Теоремы \ref{th:inver}.  

\begin{remark} Регулярную  оптимально исчерпывающую   систему областей $\mathcal U_{D_0} (D)$ с центром $D_0\subset D$ в Теореме \ref{th3}
и Следствии \ref{holDg} на основе анализа тонких совместных результатов В.~Хансена и И.~Нетуки \cite{HN} об аппроксимации мер Йенсена гармоническими мерами можно заменить на систему областей $D'\Subset D$, включающих в себя  область $D_0\Subset D$ и полученных из 
 исчерпывающей область $D$ последовательности областей $D_n\Subset D$, $n\in \NN$  с аналитической или кусочно линейной или иной <<хорошей>> границей удалением из $D_n$ произвольного конечного набора  попарно непересекающихся замкнутых кругов.
\end{remark}
\subsubsection{\underline{Обращение с аналитическими и полиномиальными дисками}}\label{apnd}

Важный подкласс в классе $J_{z_0}(D)$ мер Йенсена порождают аналитические диски в $D$ с центром $z_0$.
{\it Аналитическим диском  в области\/ $D$ с центром в точке\/}  $z_0\in D$\/ называется функция $g \colon  \clos \DD \to D$, непрерывная на $\clos \DD$ с  голоморфным сужением в $\DD$, для которой $f(0)=z_0$ {\large(}см. \cite{Poletsky}, \cite{Po99}, \cite{CR+}{\large)}\footnote{ Если используются различные виды аналитических дисков, то рассматриваемые здесь аналитические диски у Е.~Полецкого выделяются  как {\it замкнутые\/} \cite{Po99}.}.  Для любого такого аналитического диска $g$ легко показать, что функция 
\begin{equation*}
	z\mapsto \frac{1}{2\pi}\int_0^{2\pi} \log \left|1-\frac{g(e^{i\theta})}{z}\right| \, d \theta , \quad z\in D\setminus \{z_0\}, 
\end{equation*}
--- потенциал Йенсена внутри области $D$ с полюсом в точке $z_0$, т.\,е.  это финитная тестовая субгармоническая функция для $D$ вне $D_0$ для любой подобласти $D_0\Subset D$, содержащей точку $z_0\in D$.

Если аналитический диск  $g$ в $D$ с центром $z_0\in D$ --- {\it многочлен\/} комплексной переменной,  то естественно называть его {\it полиномиальным диском в $D$ с центром в точке $z_0\in D$\/}. 
\begin{theorem}\label{th:4} Пусть функция  $M\in \dsbh (D) $ с зарядом Рисса $\nu_M$,  точка $z_0$ и мера $\nu\geq 0$ такие же, как в Теореме\/ {\rm \ref{th3}}. Если существует постоянная  $C\in \RR$, с которой неравенство 
\begin{equation*}
	\int\limits_{g(\clos \DD)} \int_0^{2\pi} \log \left|1-\frac{g(e^{i\theta})}{z}\right| \, d \theta \, d\nu 	
	 \leq  	\int\limits_{g(\clos \DD)}  \int_0^{2\pi} \log \left|1-\frac{g(e^{i\theta})}{z}\right| \, d \theta \, d\nu_M+C 	
\end{equation*}
выполнено для всех аналитических или только полиномиальных  дисков $g$ в $D$ с центром $z_0$, то  
найдётся функция  $u\in \text{\rm sbh}(D)\setminus \{\boldsymbol{-\infty}\}$ с мерой Рисса $\nu_u\geq \nu$, удовлетворяющая ограничению 
 $u\leq M$ на $D$.
\end{theorem}
В случае субгармонической  функции $M$ обсуждение схемы доказательства Теоремы  \ref{th:4} 
содержится  в  \cite[1.2.1--1.2.2, Дополнения 1.2.3, 1.2.4]{Khsur}. Это одна из причин, по которой  мы опускаем здесь 
доказательство  Теоремы  \ref{th:4}. Другая в том, что многомерный вариант Теоремы \ref{th:4} в $\CC^n$ более естественен и будет рассмотрен с применениями в ином месте. Как и Следствия \ref{holD}, \ref{holDg} доказывается 
\begin{corollary}\label{holDgad} В условиях Теоремы\/ {\rm \ref{th:4}}  вместо меры
рассмотрим последовательность точек\/  ${\tt Z}=\{{\tt z}_k\}_{k=1,2,\dots}\subset D$ без точек сгущения в $D$. 
Если существует постоянная  $C\in \RR$, с которой неравенство 
\begin{equation*}
	  \int_0^{2\pi} \sum\limits_{ {\tt z}_k\in g(\clos \DD) } \log \left|1-\frac{g(e^{i\theta})}{{\tt z}_k}\right| \, d \theta  	
	 \leq  	\int\limits_{g(\clos \DD)}  \int_0^{2\pi} \log \left|1-\frac{g(e^{i\theta})}{z}\right| \, d \theta \, d\nu_M(z)+C 	
\end{equation*}
выполнено для всех аналитических или только полиномиальных  дисков $g$ в $D$ с центром $z_0$, то в предположении, что $\infty \notin D$,  для любого числа $\e>0$ найдётся ненулевая функция $f\in \Hol (D)$, для которой  $ f({\tt Z})=0$ и выполнены неравенства \eqref{efM} в каждой  точке $z\in D$ с любым числом $r$, удовлетворяющим условию \eqref{0r1}.
При $\infty \in D$  изменения как  в  Замечании\/ {\rm  \ref{r:tim}}.
\end{corollary}

\section{Радиальные  мажоранты $M$}\label{S3}

Функция $f$ на подмножестве  $S\subset \CC_{\infty}$, удовлетворяющем условию $e^{i\theta}S:=
\{e^{i\theta}z\colon z\in S\}=S$ для всех $\theta \in \RR$, {\it радиальная}, если $f(ze^{i\theta})=f(z)$ для любых $z\in S$ и $\theta \in \RR$. Очевидно, такая функция однозначно определяется своим сужением на $S\cap [0,+\infty]$. Для такой  функции $f$  и $r\in S\cap [0,+\infty]$ через $f'_{\leftd}(r)$ и $f'_{\rightd}(r)$ обозначаем соответственно  левую и правую производную сужения функции $f\bigm|_{S\cap \RR^+}$ в тех   точках $r\in S\cap [0,+\infty]$, в которых такая производная имеет смысл  и существует.  При этом для $r=+\infty$ используем инверсию $\star$ из раздела \ref{s1.1}  Введения, или
$\star_{z_0}$ в обозначении  \eqref{inv_z0}.

\subsection{{Радиальные субгармонические функции}}\label{ssCr} 
Для   $0\leq r_1<r_2\leq +\infty$ полагаем $A(r_1,r_2):=\{z\in \CC \colon r_1<|z|<r_2\}\subset \CC$ --- {\it открытое кольцо.}
\begin{propos}\label{prsbr} Пусть $M\colon A(r_1,r_2)\to \RR$ --- радиальная функция  с сужением $m=M\bigm|_{(r_1,r_2)}$. Следующие  пять  утверждений попарно эквивалентны.
\begin{enumerate}
	\item\label{1s}   Функция $M$ --- субгармоническая в  кольце $A(r_1,r_2)$, $M\neq \boldsymbol{-\infty}$. 	
	\item\label{2s} Функция $m\colon (r_1, r_2)\to \RR$ --- выпуклая относительно $\log$, т.\,е. суперпозиция $m\circ \exp$ --- выпуклая функция на $(\log r_1, \log r_2)\subset \RR$.
	\item\label{3s} Функция $m$ непрерывна, во  всех точках  $r\in (r_1, r_2)$ существуют непрерывная слева  левая производная $m'_{\leftd}(r)$ и/или непрерывная справа правая производная $m'_{\rightd}(r)$, 	а  функции $r\mapsto rm'_{\leftd}(r)$ и/или $r\mapsto rm'_{\rightd}(r)$ возрастающие. При этом	$ m'_{\leftd}(r)\leq m'_{\rightd}(r)$  во всех точках $r\in (r_1,r_2)$ и непрерывны и совпадают всюду на $(r_1,r_2)$ за исключением счётного множества значений $r$.
\item\label{4s} 	Для некоторого (любого) $r_0\in (r_1,r_2)$ и некоторой  возрастающей непрерывной вне счётного подмножества из $(r_1,r_2)$ функция $n_0\colon (r_1,r_2)\to \RR$ справедливо представление  с интегралом Римана
\begin{equation}\label{mnrep}
	m(r)=m(r_0)+\int_{r_0}^r \frac{n_0(t)}{t}\, dt, \quad  r\in (r_1,r_2). 
\end{equation}
Здесь  можно выбрать функцию $n_0$ по правилу
\begin{equation}\label{n0rep}
n_0(r):=	rm'_{\leftd}(r) \quad \text{ или } \quad  n_0(r):= rm'_{\rightd}(r), \quad   r\in (r_1,r_2).
\end{equation}
\item\label{5s} Функция $m$ полунепрерывна сверху, локально интегрируема по мере Лебега на открытом интервале $(r_1,r_2)$ и  
$r\mapsto \bigl(rm'(r)\bigr)'$ (здесь уже дифференцирование распределения) ---   положительное распределение, или обобщённая функция,  на  классе финитных гладких функций на $(r_1,r_2)$, и даже    положительная мера Радона, т.\,е. борелевская мера на $(r_1,r_2)$.
 \end{enumerate}
\end{propos}
\begin{proof} Все эти эквивалентности  известны, но разбросаны по различным источникам. Так, эквивалентность 
$\ref{1s}\Leftrightarrow \ref{2s}$ --- в \cite[1.II.10]{Doob}.  Импликация $\ref{2s}\Rightarrow \ref{3s}$ сразу следует из соответствующих свойств выпуклой функции $m\circ \exp$ --- \cite[Theorem 1.1.7]{Ho}, \cite[1.XIV.4]{Doob}. Представление \eqref{mnrep} при переходе $\ref{3s}\Rightarrow \ref{4s}$ получаем сразу при выборе $n_0$ как в  \eqref{n0rep}. Импликация $\ref{4s}\Rightarrow \ref{5s}$ --- результат  дифференцирования по $r$ тождества \eqref{mnrep} и возрастания функции $n_0$, почти всюду совпадающей с функциями из \eqref{n0rep} \cite[гл.~VIII, \S~2]{Nat}.  Наконец, если справедливо 
 \ref{5s}, то  в полярных координатах $re^{i\theta}=z\in A(r_1,r_2)$, $r\in (r_1,r_2)$, оператор Лапласа $\Delta$ от полунепрерывной сверху локально интегрируемой по мере Лебега $\lambda$ функции  $M(re^{i\theta}):=m(r)$, $r\in (r_1,r_2)$, $\theta \in \RR$, записывается через плотности мер и (тензорное) произведение положительных мер \cite[гл.~IV, \S~8]{Schwartz}
\begin{equation}\label{df:del}
	\frac{1}{2\pi}\,(\Delta M) (re^{i\theta})\,d \lambda (re^{i\theta})=d\bigl(rm_{\leftd}'(r)\bigr)\otimes \frac{1}{2\pi} \,d\theta\geq 0.
\end{equation}
Таким образом, $M\in \sbh \bigl(A(r_1,r_2)\bigr)$ и цикл замыкается на  утверждении \ref{1s}.
\end{proof}
\begin{remark}\label{r11}  Мера Рисса радиальной функции $M$ описана  в \eqref{df:del}, где левую производную $m_{\leftd}'(r)$ можно заменить  на 
правую $m_{\rightd}'(r)$, $r\in (r_1,r_2)$.
\end{remark}

Предложение \ref{prsbr} вместе с \cite[Theorem 2.6.6]{Rans}, \cite[Предложение 5.1]{Kond}  сразу даёт

\begin{propos}\label{prsdii} При $0<R\leq +\infty$ радиальная функция $M\colon D(R)\to \RR$   субгармоническая тогда и только тогда, когда её сужение  $m:=M\bigm|_{(0,R)}$ обладает одним (любым)  из попарно эквивалентных свойств\/ {\rm \ref{2s}--\ref{5s}} Предложения\/ {\rm \ref{prsbr}} при  $r_1=0$ и $r_2=R$,  сужение $m_0:=M\bigm|_{[0,R)}$ --- возрастающая непрерывная в нуле функция, а представление \eqref{mnrep} в силу непрерывности $M$ в нуле можно записать в виде
\begin{equation}\label{repM0}
	M(r)=M(0)+\int_0^{r} \frac{n(t)}{t} \, dt, \quad r\in [0,R),
\end{equation}
где возрастающую  функцию $n\colon [0,R)\to \RR$ можно задать как непрерывную справа равенствами $n(r):= r(m_0)_{\rightd}'(r)\geq 0$ во всех точках  $r\in [0,R)$. 
\end{propos}
Из Предложения \ref{prsdii}, используя   инверсию $\star$, как  в разделе \ref{s1.1}, получаем
\begin{propos}\label{prsdiii} Для\/ $0<R<+\infty$  радиальная функция\/ $v\colon \CC_{\infty}\setminus \overline{D}(R)\to \RR$ субгармоническая тогда и только тогда, когда её сужение  $m:=v\bigm|_{(R,+\infty)}$ обладает одним (любым)  из попарно эквивалентных свойств\/ {\rm \ref{2s}--\ref{5s}} при  $r_1=R$ и $r_2=+\infty$, сужение $m_0:=v\bigm|_{(R,+\infty]}$ --- убывающая непрерывная в точке $+\infty$ функция, а представление вида \eqref{repM0} можно записать как
\begin{equation}\label{repv0}
	v(r)=v(\infty)-\int_r^{+\infty} \frac{n(t)}{t} \, dt, \quad r\in (R,+\infty],
\end{equation}
где возрастающую  функцию $n\colon (R,+\infty]\to \RR$ можно задать  как непрерывную слева равенствами $n(r):= r(m_0)_{\leftd}'(r)\leq 0$ во всех точках  $r\in (R,+\infty]$. 
\end{propos}

\subsection{{Случай $D=\CC$}}\label{ssCrC}    Пусть $0< {R_0}<+\infty$.  
Для произвольного  $b\in \RR^+$ введем класс $\decr^+[{R_0},+\infty;\leq b)$ всех {\it положительных убывающих 
} функций $d$  на $[R_d,+\infty]$ при некотором  $R_d\in (0,{R_0})$  {\it  с ограничением\/} 
\begin{equation}\label{dfdd}
 \int_{R_0}^{+\infty} \frac{d(t)}{t}\, dt\leq b .
\end{equation}
Полагаем 
\begin{equation}\label{dinfty+}
	\decr^+[{R_0},+\infty;<+\infty):=\bigcup_{b>0}\decr^+[{R_0},+\infty;\leq b) 
\end{equation}
--- класс всех положительных убывающих функций в открытых окрестностях луча-отрезка $[R_0,+\infty]\subset [0,+\infty]$ с ограничением
\begin{equation}\label{dfdd1}
 \int_{R_0}^{+\infty} \frac{d(t)}{t}\, dt< +\infty .
\end{equation} 
Из \eqref{dfdd1} для таких функций $d$, в частности, следует $\lim_{t\to+\infty}d(t)=0$.

\begin{propos}\label{prsds} Если $d\in \decr^+[{R_0},+\infty;\leq b)$, $0<R_d<{R_0}$, то функция 
\begin{equation}\label{dfvdc}
	v(z):=\int_{|z|}^{+\infty}\frac{d(t)}{t} \, dt , \quad  z\in \CC_{\infty}\setminus \overline{D}(R_d) ,
\end{equation}
принадлежит  классу   $\sbh_0^+\bigl(\CC\setminus D({R_0});\leq b\bigr)$, определённому в\/ \eqref{sbh+}.
\end{propos}
\begin{proof}  Из определения \eqref{dfdd}--\eqref{dfvdc} с возрастающей функцией  $n:=-d$ имеет место представление \eqref{repv0} Предложения 
\ref{prsdiii} с требуемыми в нём свойствами для функции $n$ и ограничениями $v\overset{ \eqref{dfdd}}{\leq}b $, $\lim_{z\to \infty}v(z)=0$.
\end{proof}

\begin{corollary}\label{coref}
Пусть\/ $m\colon [{R_0},+\infty)\to \RR$ --- возрастающая непрерывная функция, у которой в каждой точке $r\in [{R_0},+\infty)$ существует непрерывная справа правая производная $m_{\rightd}'(r)$ и функция $r\mapsto rm'_{\rightd}(r)$ возрастает. Пусть $f\neq 0$ --- целая функция, обращающаяся в нуль на последовательности ${\tt Z}=\{{\tt z}_k\}_{k=1,2,\dots}$ и удовлетворяющая неравенству  $\log \bigl|f(z)\bigr|\leq m\bigl(|z|\bigr)$ для всех $z\in \CC$ при $|z|\geq {R_0}$. Тогда  для любого числа $b\geq 0$ найдутся 
\begin{equation}\label{dCCMR}
	C:=\const_{{R_0},b}^+, \quad \overline{C}_m:=\const_{{R_0},b,m}^+,
\end{equation}
с которыми при любых $d\in \decr^+[{R_0}, +\infty;\leq b)$ справедливо неравенство
\begin{equation}\label{sumbyIm}
		\sum_{|{\tt z}_k|\geq {R_0}} \int_{|{\tt z}_k|}^{+\infty} \frac{d(t)}{t} \, d t\leq  
		\int_{{R_0}}^{+\infty} d(t)m'_{\rightd}(t) \, dt+C\overline{C}_m -C\log \bigl|f(0)\bigr|,
\end{equation}
а постоянная  	$\overline{C}_m$ положительно однородная  и  полуаддитивная сверху по $m$.
\end{corollary}

\begin{proof} Если не конечен интеграл 
\begin{equation}\label{intRdm}
	\int_{{R_0}}^{+\infty} d(t)m'_{\rightd}(t) \, dt \geq 0,
\end{equation}
то доказывать нечего. Поэтому предполагаем интеграл  \eqref{intRdm} конечным. 

Для  функции $m\colon [{R_0},+\infty)\to \RR$  с  её  продолжением  
\begin{equation*}
	m_{\leq {R_0}}(r):=\begin{cases}
m(r)\quad &\text{при $r\geq {R_0} $},\\
 m({R_0})\quad &\text{при $0\leq r< {R_0} $},	
	\end{cases}
	\quad r\in \RR^+,
\end{equation*}
на $[0,{R_0}]$ по  
Предложению \ref{prsdii}  в варианте импликации \ref{3s}$\Rightarrow$\ref{1s} Предложения \ref{prsbr} функция $M(z):=m_{\leq {R_0}}\bigl(|z|\bigr)$, $z\in \CC$, --- субгармоническая радиальная с мерой Рисса, определяемой, согласно  \eqref{df:del} и Замечанию \ref{r11}, равенствами
\begin{equation}\label{df:del-}
	\frac{1}{2\pi}\,(\Delta M) (re^{i\theta})\,d \lambda (re^{i\theta})=
	\begin{cases}
		d\bigl(rm_{\rightd}'(r)\bigr)\otimes \frac{1}{2\pi} \,d\theta\geq 0  &\text{на  $\CC \setminus \overline{D}({R_0})$},\\
		{R_0}\,m_{\rightd}'({R_0}) \, \frac{1}{2\pi} \, d\theta &\text{на окружности   $\partial D({R_0})$}, \\
		0  &\text{на круге $D({R_0})$}.
	\end{cases}
\end{equation}

 По Предложению \ref{prsds} функция $v$, определённая в \eqref{dfvdc}, принадлежит классу $\sbh_0^+\bigl(\CC\setminus D({R_0});\leq b\bigr)$. Отсюда по Теореме \ref{th:1} в варианте $D:=\CC$, $D_0:=D({R_0})$ в силу 
\eqref{in:fz0} получаем 
\begin{equation}\label{in:fz0+}
		\sum_{|{\tt z}_k|\geq {R_0}} \int_{|{\tt z}_k|}^{+\infty} \frac{d(t)}{t} \, d t	
	\leq \int_{{R_0}}^{+\infty} \Bigl(\,	\int_{r}^{+\infty} \frac{d(t)}{t} \, d t \Bigr)	\,d \bigl(rm_{\rightd}'(r)\bigr)
	+	C \,\overline{C}_m-C \,\log \bigl|f(z_0)\bigr|
\end{equation}
с постоянными  $C$ и $\overline{C}_m:=\overline{C}_M$ из  \eqref{cz0C+} вида \eqref{dCCMR} с соответствующим требуемыми свойствами.  
При конечности интеграла  \eqref{intRdm} ввиду возрастания   функции $r\mapsto rm'_{\rightd}(r)$  из соотношений
\begin{equation*}
 rm'_{\rightd}(r)  \int_{r}^{+\infty} \frac{d(t)}{t} \, d t\leq \int_{r}^{+\infty} \frac{d(t)}{t}\, t m'_{\rightd}(t) \, dt
=\int_{r}^{+\infty} d(t)m'_{\rightd}(t) \, dt \underset{r\to  +\infty}{\longrightarrow} 0
\end{equation*}
следует возможность интегрирования по частям внешнего интеграла в правой части \eqref{in:fz0+}. Это позволяет заменить двойной интеграл 
в \eqref{in:fz0+} на интеграл из  \eqref{sumbyIm} с допустимыми в рамках \eqref{dCCMR} изменениями постоянных $C$ и $\overline{C}_m$.
\end{proof}

\begin{corollary}\label{corgl}  Пусть выполнены условия Следствия\/ {\rm \ref{coref}}. Тогда  для произвольной функции   
$d\overset{\eqref{dinfty+}}{\in} \decr^+[{R_0},+\infty;<+\infty)$ 
справедлива импликация 
\begin{equation*}
\left(\,  \int_{{R_0}}^{+\infty} d(t)m'_{\rightd}(t) \, dt<+\infty\,\right)
\Longrightarrow
\left(\,	\sum\limits_{|{\tt z}_k |\geq {R_0}} \int_{|{\tt z}_k|}^{+\infty} \frac{d(t)}{t} \, dt <+\infty \,\right).
\end{equation*}
\end{corollary}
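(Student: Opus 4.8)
My plan is to obtain Corollary~\ref{corgl} as an immediate consequence of the quantitative estimate \eqref{sumbyIm} established in Corollary~\ref{coref}; no new analysis is needed beyond identifying the exact subclass of test functions to which the given $d$ belongs. The whole point is that passing from the finite bound \eqref{sumbyIm} to the qualitative implication requires only that the right-hand side of \eqref{sumbyIm} be finite under the stated hypothesis.

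First I would fix $d\in\decr^+[R_0,+\infty;<+\infty)$ and set
\[
b:=\int_{R_0}^{+\infty}\frac{d(t)}{t}\,dt,
\]
which is finite by \eqref{dfdd1}. With this choice of $b$ the inequality \eqref{dfdd} holds (trivially, with equality), so $d\in\decr^+[R_0,+\infty;\leq b)$; this membership is exactly what the union \eqref{dinfty+} asserts, read from right to left. Next I would invoke Corollary~\ref{coref} for this fixed $b$: it furnishes the finite constants $C=\const_{R_0,b}^+$ and $\overline{C}_m=\const_{R_0,b,m}^+$ of \eqref{dCCMR} together with the estimate \eqref{sumbyIm}. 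Assuming now the premise of the implication, namely $\int_{R_0}^{+\infty}d(t)\,m'_{\rightd}(t)\,dt<+\infty$, every summand on the right of \eqref{sumbyIm} is finite, whence the series on its left converges --- exactly the desired conclusion.

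The one step I expect to need care is the term $-C\log\bigl|f(0)\bigr|$ in \eqref{sumbyIm}, which is finite only when $f(0)\neq0$; this is the sole (and mild) obstacle. Since $f\not\equiv0$ has isolated zeros, I would simply fix a base point $z_0\in D(R_0)$ with $f(z_0)\neq0$ and rerun the derivation of Corollary~\ref{coref}, applying Theorem~\ref{th:1} at $z_0$ rather than at the origin. Because $M(z)=m_{\leq R_0}(|z|)$ is finite and radial on all of $D(R_0)$, we have $z_0\in D(R_0)\setminus(-\infty)_M$, so Theorem~\ref{th:1} applies and produces \eqref{sumbyIm} with $\log|f(z_0)|$ replacing $\log|f(0)|$ and with finite constants adjusted accordingly. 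The right-hand side stays finite under the same premise, and the implication follows verbatim.
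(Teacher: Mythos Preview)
Your proposal is correct and mirrors exactly what the paper intends: Corollary~\ref{corgl} is stated without proof in the paper, as an immediate consequence of the quantitative estimate \eqref{sumbyIm} in Corollary~\ref{coref}. Your handling of the case $f(0)=0$ by shifting to a base point $z_0\in D(R_0)$ with $f(z_0)\neq0$ is precisely the move the paper makes in the proof of Corollary~\ref{cor:1}, so no additional justification is needed.
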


Отметим  альтернативные  варианты  Следствий \ref{coref} и \ref{corgl}. 
Эквивалентность \ref{1s}$\Leftrightarrow$\ref{2s} Предложения \ref{prsbr} через замену переменных $x=\log r$, $r\in (r_1,r_2)$ дополняет
\begin{propos}\label{prgf}  Пусть $0\leq r_1<r_2\leq +\infty$, $q\colon (\log r_1, \log r_2)\to \RR$. Суперпозиция  $q \circ \log \colon (r_1,r_2)\to \RR$  выпукла относительно $\log$ тогда и только тогда, когда $q$ --- выпуклая функция.
\end{propos}
Отсюда получаем
\begin{corollary}\label{cor22} Пусть $x_0\in \RR$ и  $q\colon [x_0,+\infty)\to \RR$ --- выпуклая возрастающая функция.  Пусть 
$f\neq 0$ --- целая функция, обращающаяся в нуль на последовательности ${\tt Z}=\{{\tt z}_k\}_{k=1,2,\dots}$ и удовлетворяющая неравенству 
 $\log \bigl|f(z)\bigr|\leq q\bigl( \log |z|\bigr)$ для всех $z\in \CC$ при $|z|\geq {e^{x_0}}$. Тогда  для любого числа $b\geq 0$ найдутся постоянные 
	$C:=\const_{{x_0},b}^+$, $\overline{C}_q:=\const_{{x_0},b,q}^+$,
с которыми  для любой выпуклой убывающей  функции 
$v\colon [x_0, +\infty)\to \RR$ с ограничениями  $\lim_{x\to +\infty}v(x)=0$ и $v(x_0)\leq b$
 справедливо неравенство
\begin{equation*}\label{sumbyImv}
		\sum_{|{\tt z}_k|\geq {e^{x_0}}} v\bigl(\log {|{\tt z}_k|} \bigr)\leq  
		-\int_{{x_0}}^{+\infty} v_{\leftd}'(x)q'_{\rightd}(x) \,dx+C\overline{C}_q -C\log \bigl|f(0)\bigr|,
\end{equation*}
а постоянная  	$\overline{C}_q$ положительно однородная  и  полуаддитивная сверху по $q$.
\end{corollary}

\begin{corollary}\label{corglAL} В условиях Следствия\/ {\rm \ref{cor22}} для любой выпуклой убывающей  функции 
$v\colon [x_0, +\infty) \to \RR$ при   $\lim_{x\to +\infty}v(x)=0$   справедлива импликация
\begin{equation*}
\left(\,	 \int_{{x_0}}^{+\infty} v_{\leftd}'(x)q'_{\rightd}(x) \, dx>-\infty\right)\Longrightarrow
	\left(\,\sum\limits_{|{\tt z}_k |\geq {e^{x_0}}} v\bigl({\log |{\tt z}_k|}\bigr) <+\infty\,\right).
\end{equation*}
\end{corollary}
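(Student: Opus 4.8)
The plan is to read Corollary~\ref{corglAL} straight off the quantitative estimate of Corollary~\ref{cor22}, so that the whole argument reduces to checking that the right-hand side of that estimate is finite under the stated hypothesis. Since $v$ is nonincreasing on $[x_0,+\infty)$ with $\lim_{x\to+\infty}v(x)=0$, it is nonnegative there and attains its largest value at the left endpoint; hence $b:=v(x_0)\in\RR^+$ is admissible with $v(x_0)\leq b$. For this $b$, Corollary~\ref{cor22} supplies constants $C=\const_{x_0,b}^+$ and $\overline{C}_q=\const_{x_0,b,q}^+$ for which
\begin{equation*}
\sum_{|{\tt z}_k|\geq e^{x_0}} v\bigl(\log|{\tt z}_k|\bigr)\leq
-\int_{x_0}^{+\infty} v_{\leftd}'(x)\,q'_{\rightd}(x)\,dx+C\,\overline{C}_q-C\log\bigl|f(0)\bigr|.
\end{equation*}

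Next I would verify finiteness of the right-hand side. The integrand $v_{\leftd}'\,q'_{\rightd}$ is nonpositive, because $v$ is nonincreasing ($v_{\leftd}'\leq 0$) while $q$ is increasing ($q'_{\rightd}\geq 0$); thus the hypothesis $\int_{x_0}^{+\infty} v_{\leftd}'(x)q'_{\rightd}(x)\,dx>-\infty$ says precisely that $-\int_{x_0}^{+\infty} v_{\leftd}'(x)q'_{\rightd}(x)\,dx<+\infty$. Since $C$ and $\overline{C}_q$ are finite by construction, the estimate forces $\sum_{|{\tt z}_k|\geq e^{x_0}} v(\log|{\tt z}_k|)<+\infty$ as soon as $\log|f(0)|>-\infty$, i.e.\ whenever $f(0)\neq 0$; this settles the generic case.

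The only obstacle is the degenerate case $f(0)=0$, where $-C\log|f(0)|=+\infty$ and the estimate is vacuous. I would dispose of it by noting that the origin never enters the sum, which ranges over $|{\tt z}_k|\geq e^{x_0}>0$. Factoring $f(z)=z^m g(z)$ with $m\in\NN$ and $g(0)\neq 0$, the functions $f$ and $g$ have the same zeros in $\{|z|\geq e^{x_0}\}$, and on $|z|\geq e^{x_1}$ with $x_1:=\max\{x_0,0\}$ one has $\log|g(z)|=\log|f(z)|-m\log|z|\leq q(\log|z|)$, since $\log|z|\geq 0$ there. Applying the already-proved finite case to $g$ on $[x_1,+\infty)$ (the hypothesis persists because $\int_{x_1}^{+\infty} v_{\leftd}'q'_{\rightd}\,dx\geq\int_{x_0}^{+\infty} v_{\leftd}'q'_{\rightd}\,dx>-\infty$, the integrand being nonpositive) yields $\sum_{|{\tt z}_k|\geq e^{x_1}} v(\log|{\tt z}_k|)<+\infty$, while the finitely many zeros in the bounded annulus $e^{x_0}\leq|z|<e^{x_1}$ contribute only a finite amount; hence the sum is finite in this case too. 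One could also bypass the boundary term entirely, at least for the convex nonincreasing $v$ that generate subharmonic radial test functions, by reading the conclusion off the purely implicational statement \eqref{vnuumf} of Corollary~\ref{cor:1} applied with $u=\log|f|$ and the radial majorant $M(z)=q(\log|z|)$, in which no $\log|f(0)|$ summand occurs.
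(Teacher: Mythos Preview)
Your argument is correct and follows the same route the paper takes: Corollary~\ref{corglAL} is presented there, without a separate proof, as the qualitative companion to the quantitative estimate of Corollary~\ref{cor22} (itself obtained from Corollary~\ref{coref} by the substitution $x=\log r$, cf.\ Proposition~\ref{prgf}); reading the implication off that estimate is exactly what is intended.

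You are in fact more careful than the paper on one point: the degenerate case $f(0)=0$, where the bound of Corollary~\ref{cor22} becomes vacuous, is simply not addressed there. Your factorization $f(z)=z^m g(z)$ disposes of it correctly. The alternative you mention at the end is the cleaner way out and is precisely how the paper itself closes this gap one level up: the proof of Corollary~\ref{cor:1} chooses $z_0\in D_0$ with $u(z_0)\neq-\infty$ before invoking Theorem~\ref{th:1}, so the term $-C\log|f(z_0)|$ is automatically finite. Since $z\mapsto v(\log|z|)$ is a genuine test function in $\sbh_0^+\bigl(\CC\setminus D(e^{x_0})\bigr)$ by Proposition~\ref{prgf} (convexity of $v$ makes $v\circ\log$ convex with respect to $\log$), appealing directly to Corollary~\ref{cor:1} covers all cases uniformly and avoids the factorization detour.
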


\begin{remark}\label{remL}  Поскольку монотонная функция на интервале вещественной прямой почти всюду по мере Лебега на этом интервале имеет конечную производную и локально интегрируема \cite[гл.~VIII, \S~2]{Nat}, во всех утверждениях этого раздела, переходя от интегралов Римана к интегралам Лебега, односторонние производные в интегралах  можно заменить на обычные производные.
\end{remark}

\begin{remark}\label{remLA}  Любое из Следствий \ref{coref}--\ref{corglAL} содержит в себе Теорему A с условиями Адамара 
\eqref{BcnuWA}, \eqref{BcZWA} из Введения.
\end{remark}

\begin{remark}\label{remLAC+}    Инверсия   	$\star_{z_0}\colon \CC_{\infty}\setminus \{z_0\}\to \CC$ из \eqref{inv_z0}, как и её обращение,  сохраняют субгармоничность и выпуклость относительно $\log$ на соответствующих множествах и преобразует возрастание  относительно порядка 
 в возрастание относительно обратного (инверсного) порядка и наоборот. Таким образом, она без труда, через замену переменных, переносит результаты этого раздела \ref{ssCrC}    на <<проколотую>> в точке $z_0$ расширенную плоскость $\CC_{\infty}\setminus\{z_0\}$.
\end{remark}

\subsection{{Случай $D=\DD$}}\label{caseDD}   
Пусть $0<r_0<1$ всюду в этом разделе \ref{caseDD}. Некоторая модификация  Предложения \ref{prsdii} --- 

\begin{propos}\label{prsdiiD} Пусть функция $m\colon [r_0, 1)\to \RR$ --- возрастающая. 
Радиальная функция 
\begin{equation}\label{df:Mm0}
	M(z):=\begin{cases}
		m\bigl(|z|\bigr)\quad &\text{при $r_0\leq	|z|<1$},\\ 
	m\bigl(r_0\bigr)\quad &\text{при $0\leq	|z|<r_0$,}
	\end{cases}
	\quad z\in \DD,
\end{equation}
субгармоническая в $\DD$  тогда и только тогда, когда  сужение функции $m$ на $(r_0,1)$, обозначаемое также через $m$,   обладает одним (любым)  из попарно эквивалентных свойств\/ {\rm \ref{2s}--\ref{5s}} Предложения {\rm \ref{prsbr}}  при  $r_1=r_0$ и $r_2=1$ и, кроме того,   исходная функция $m$  непрерывна справа в точке $r_0$. 

При   выполнении этих условий  существуют $m_{\rightd}'(r)$ при всех $r\in [r_0,1)$, а представления  вида \eqref{mnrep}, \eqref{repM0}  могут быть  записаны в форме
\begin{equation}\label{repM0+}
	M(r)=m(r_0)+\int_{r_0}^{\max\{r_0,r\}} \frac{n(t)}{t} \, dt, \quad r\in [0,1),
\end{equation}
где возрастающую функцию $n\colon [r_0,1)\to \RR$ можно задать и как  непрерывную справа   равенствами $n(r):= r\,m_{\rightd}'(r)\geq 0$ во всех точках  $r\in [r_0,1)$. 

Плотность  меры Рисса  субгармонической функции \eqref{df:Mm0} можно  задать, по  Замечанию\/ {\rm \ref{r11}},  
равенствами 
\begin{equation}\label{df:del+}
	\frac{1}{2\pi}\,(\Delta M) (re^{i\theta})\,d \lambda (re^{i\theta})=
	\begin{cases}
		d\bigl(rm_{\rightd}'(r)\bigr)\otimes \frac{1}{2\pi} \,d\theta\geq 0  &\text{на кольце $A(r_0,1)$},\\
		r_0m_{\rightd}'(r_0) \, \frac{1}{2\pi} \, d\theta &\text{на окружности   $\partial D(r_0)$}, \\
		0  &\text{  на круге $D(r_0)$}.
	\end{cases}
\end{equation}
\end{propos}

Для   $b\in \RR^+$ введем класс $\decr^+[r_0,1;\leq b)$ {\it положительных убывающих\/} функций 
$d\colon [r_d,1)\to \RR$ с некоторым $r_d\in (0, r_0)$ и  {\it с ограничением\/} 
\begin{equation}\label{ir01d}
 \int_{r_0}^{1} \frac{d(t)}{t}\, dt \leq b .
\end{equation} 

\begin{propos}\label{dvtr}  Если $d\in \decr^+[r_0,1;\leq b)$ с $0<r_d<r_0$, то функция 
\begin{equation}\label{dfvdcd+}
	v(z):=\int_{|z|}^{1}\frac{d(t)}{t} \, dt , \quad  z\in \DD \setminus \overline{D}(r_d) ,
\end{equation}
принадлежит  классу   $\sbh_0^+\bigl(\DD\setminus D(r_0);\leq b\bigr)$, определённому в\/ \eqref{sbh+}.
\end{propos}
\begin{proof} Продолжим функцию  $d$ нулевыми значениями на весь отрезок-луч $[1,+\infty]$, сохраняя для новой функции на $[r_d,+\infty]$ то же обозначение  $d\colon [r_d,+\infty]\to [0,+\infty)$. Ввиду \eqref{ir01d}, очевидно, справедливо ограничение \eqref{dfdd} с ${R_0}=r_0$. Следовательно, 
продолженная функция $d$ из класса $\decr^+[r_0,+\infty;\leq b)$. Тогда по Предложению  \ref{prsds} функция $v$, определённая как в \eqref{dfvdc} с $R_d=r_d$ принадлежит классу $\sbh_0^+\bigl(\CC \setminus D(r_0);\leq b\bigr)$, а её сужение на  $ \DD \setminus D(r_0)$ определяется как в  
\eqref{dfvdcd+} и $\lim_{1>|z|\to 1} v(z)=0$, что и требовалось.
\end{proof}

\begin{corollary}\label{coref+}
Пусть\/ $m\colon [r_0,1)\to \RR$ --- возрастающая непрерывная функция, у которой в каждой точке $r\in [r_0,1)$ существует непрерывная справа правая производная $m_{\rightd}'(r)$ и функция $r\mapsto rm'_{\rightd}(r)$ --- возрастающая. Пусть $f\in \Hol (\DD)$ --- ненулевая  функция, обращающаяся в нуль на последовательности ${\tt Z}=\{{\tt z}_k\}_{k=1,2,\dots}$,  и  неравенство  $\log \bigl|f(z)\bigr|\leq m\bigl(|z|\bigr)$ 
выполнено для всех $z\in \DD$ при $|z|\geq r_0$. Тогда  для любого числа $b\geq 0$ найдутся  постоянные 
\begin{equation}\label{dCCMR+}
	C:=\const_{r_0,b}^+, \quad \overline{C}_m:=\const_{r_0,b,m}^+,
\end{equation}
с которыми для  любой функции $d\in \decr^+[r_0, 1;\leq b)$ справедливо неравенство
\begin{equation}\label{sumbyIm+}
		\sum_{ r_0 \leq	|{\tt z}_k|<1} \int_{|{\tt z}_k|}^{1} \frac{d(t)}{t} \, d t\leq  
		\int_{r_0}^{1} d(t)m'_{\rightd}(t) \, dt+C\overline{C}_m -C\log \bigl|f(0)\bigr|,
\end{equation}
а постоянная  	$\overline{C}_m$ положительно однородная  и  полуаддитивная сверху по $m$.
\end{corollary}
\begin{proof}
Если \underline{не} конечен интеграл 
\begin{equation}\label{intRdm+}
	\int_{r_0}^{1} d(t)m'_{\rightd}(t) \, dt\geq 0,
\end{equation}
то доказывать нечего. Поэтому предполагаем интеграл  \eqref{intRdm+} конечным. 

Функция $M$, определённая в Предложении \ref{prsdiiD} в виде \eqref{df:Mm0} субгармоническая радиальная на $\DD$ с мерой Рисса \eqref{df:del+}. 

 По Предложению \ref{dvtr} функция $v$, определённая в \eqref{dfvdcd+}, принадлежит классу $\sbh_0^+\bigl(\DD\setminus D(r_0);\leq b\bigr)$. Отсюда по Теореме \ref{th:1} в варианте $D:=\DD$, $D_0:=D(r_0)$ в силу 
\eqref{in:fz0} получаем 
\begin{equation}\label{in:fz0++}
		\sum_{|{\tt z}_k|\geq r_0} \int_{|{\tt z}_k|}^{1} \frac{d(t)}{t} \, d t	
	\leq \int_{r_0}^{1} \Bigl(\,	\int_{r}^{1} \frac{d(t)}{t} \, d t \Bigr)	\,d \bigl(rm_{\rightd}'(r)\bigr)
	+	C \,\overline{C}_m-C \,\log \bigl|f(z_0)\bigr|
\end{equation}
с постоянными  $C$ и $\overline{C}_m:=\overline{C}_M$ из  \eqref{cz0C+} вида \eqref{dCCMR}.  
При конечности интеграла  \eqref{intRdm+} ввиду возрастания   функции $r\mapsto rm'_{\rightd}(r)$  из соотношений
\begin{equation*}
 rm'_{\rightd}(r)  \int_{r}^{1} \frac{d(t)}{t} \, d t\leq \int_{r}^{1} \frac{d(t)}{t}\, t m'_{\rightd}(t) \, dt
=\int_{r}^{1} d(t)m'_{\rightd}(t) \, dt \underset{1>r\to  1}{\longrightarrow} 0
\end{equation*}
следует возможность интегрирования по частям внешнего интеграла в правой части \eqref{in:fz0+}. Это позволяет заменить двойной интеграл 
в \eqref{in:fz0++} на интеграл из  \eqref{sumbyIm+} с допустимыми в рамках \eqref{dCCMR+} изменениями постоянных $C$ и $\overline{C}_m$.
\end{proof}
\begin{corollary}\label{corgl+} В условиях Следствия\/ {\rm \ref{coref+}} для любой положительной убывающей  функции 
$d$ в открытой окрестности  интервала $[r_0,1)\subset (0,+\infty)$ справедлива импликация 
\begin{equation*}
\left(\,	  \int_{r_0}^{1} d(t)m'_{\leftd}(t) \, dt<+\infty \, \right) \Longrightarrow
	\left(\,\sum\limits_{r_0 \leq	|{\tt z}_k|<1} \int_{|{\tt z}_k|}^{1} \frac{d(t)}{t} \, dt <+\infty \, \right).
\end{equation*}
\end{corollary}

Отметим  также и  альтернативные  варианты  Следствий \ref{coref+} и \ref{corgl+}, основанные на эквивалентности \ref{1s}$\Leftrightarrow$\ref{2s} Предложения \ref{prsbr}, замене  переменных $x=\log r$ при $r\in (r_0,1)$, $x\in  (\log r_0,0)$ и Предложении \ref{prgf}. 

\begin{corollary}\label{cor22+} Пусть $0>x_0\in \RR$ и  $q\colon [x_0,0)\to \RR$ --- выпуклая возрастающая функция.  Пусть 
$0\neq	 f\in \Hol (\DD)$, $f$ обращается  в нуль на ${\tt Z}=\{{\tt z}_k\}_{k=1,2,\dots}$ и удовлетворяет неравенству 
 $\log \bigl|f(z)\bigr|\leq q\bigl( \log |z|\bigr)$ для всех $z\in \DD$ при ${e^{x_0}} \leq |z|<1 $. Тогда  для любого числа $b\geq 0$ найдутся постоянные  	$C:=\const_{{x_0},b}^+$, $\overline{C}_q:=\const_{{x_0},b,q}^+$,

с которыми  для любой выпуклой убывающей  функции 
$v\colon [x_0, 0)\to \RR$ с ограничениями  $\lim\limits_{0>x\to 0}v(x)=0$ и $v(x_0)\leq b$
 справедливо неравенство
\begin{equation*}
		\sum_{ {e^{x_0}} \leq |{\tt z}_k|<1} v\bigl(\log {|{\tt z}_k|} \bigr)\leq  
		-\int_{{x_0}}^{1} v_{\leftd}'(x)q'_{\rightd}(x) \,dx+C\overline{C}_q -C\log \bigl|f(0)\bigr|,
\end{equation*}
а постоянная  	$\overline{C}_q$ положительно однородная  и  полуаддитивная сверху по $q$.
\end{corollary}

\begin{corollary}\label{corglAL+} В условиях Следствия\/ {\rm \ref{cor22+}} для любой выпуклой убывающей  функции 
$v\colon [x_0, 0) \to \RR$ с   $\lim_{0>x\to 0}v(x)=0$  истинна импликация
\begin{equation*}
\left(\,	  \int_{{x_0}}^{0} v_{\leftd}'(x)q'_{\rightd}(t) \, dt>-\infty\,\right)\Longrightarrow
	\left(\,\sum\limits_{ {e^{x_0}} \leq |{\tt z}_k|<1} v\bigl({\log |{\tt z}_k|}\bigr) <+\infty\,\right).
\end{equation*}
\end{corollary}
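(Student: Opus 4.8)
The plan is to obtain Corollary~\ref{corglAL+} directly from the quantitative inequality of Corollary~\ref{cor22+}, in exactly the way the qualitative Corollaries~\ref{corgl}, \ref{corgl+} and~\ref{corglAL} are read off from their quantitative sources. First I would record that a non-increasing $v\colon[x_0,0)\to\RR$ with $\lim_{0>x\to 0}v(x)=0$ is automatically nonnegative: being non-increasing with limit $0$ at the right endpoint, it satisfies $v(x)\geq 0$ on $[x_0,0)$. In particular $v(x_0)<+\infty$, so I may set $b:=v(x_0)$, which fulfils the normalization $v(x_0)\leq b$ demanded by Corollary~\ref{cor22+}.

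With this $b$, Corollary~\ref{cor22+} furnishes finite constants $C:=\const_{x_0,b}^+$ and $\overline{C}_q:=\const_{x_0,b,q}^+$ together with
\[
\sum_{e^{x_0}\leq|{\tt z}_k|<1} v\bigl(\log|{\tt z}_k|\bigr)\leq-\int_{x_0}^{0}v_{\leftd}'(x)\,q_{\rightd}'(x)\,dx+C\overline{C}_q-C\log\bigl|f(0)\bigr|.
\]
Here $v_{\leftd}'\leq 0$ and $q_{\rightd}'\geq 0$, so the integrand is $\leq 0$; the hypothesis $\int_{x_0}^{0}v_{\leftd}'(x)\,q_{\rightd}'(x)\,dx>-\infty$ therefore makes the first term on the right a finite nonnegative number. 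The middle term is finite by construction, and---provided $f(0)\neq0$---the last term is finite as well; hence the whole right-hand side is finite and the nonnegative series on the left converges, which is the claim.

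The one point requiring care, and the main obstacle, is the degenerate case $f(0)=0$: then $-C\log|f(0)|=+\infty$ and the displayed bound is vacuous. To repair this I would not use the radial specialization but return to inequality~\eqref{in:fz0} of Theorem~\ref{th:1}. Since $f\not\equiv0$ its zeros are isolated, so I can choose a center $z_0\in D(r_0)$ with $f(z_0)\neq0$; the radial test function $z\mapsto v(\log|z|)$ still lies in $\sbh_0^+\bigl(\DD\setminus D(r_0);\leq b\bigr)$ by Proposition~\ref{dvtr}, and the Riesz-mass integral $\int_{\DD\setminus D(r_0)}v(\log|\cdot|)\,d\nu_M$ does not depend on the choice of $z_0$. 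Thus~\eqref{in:fz0} yields the same conclusion with the now-finite term $-C\log|f(z_0)|$ in place of $-C\log|f(0)|$. Equivalently, one factors out the zero at the origin, which leaves the sum over $e^{x_0}\leq|{\tt z}_k|<1$ untouched.

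What remains is only the routine bookkeeping that the logarithmic substitution $x=\log|z|$, $t=e^{s}$ matches the two forms of the majorant integral: with $v(x)=\int_{x}^{0}d(e^{s})\,ds$ and $q(x)=m(e^{x})$ one has $v_{\leftd}'(x)=-d(e^{x})$ and $q_{\rightd}'(x)=e^{x}m_{\rightd}'(e^{x})$, whence $-\int_{x_0}^{0}v_{\leftd}'q_{\rightd}'\,dx=\int_{r_0}^{1}d(t)\,m_{\rightd}'(t)\,dt$ and $v\bigl(\log|{\tt z}_k|\bigr)=\int_{|{\tt z}_k|}^{1}\frac{d(t)}{t}\,dt$. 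This exhibits Corollary~\ref{corglAL+} as precisely the logarithmic reparametrization of Corollary~\ref{corgl+}, so no analytic input beyond Corollary~\ref{cor22+} is needed.
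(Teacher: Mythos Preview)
Your proposal is correct and matches the paper's intended derivation: the paper itself gives no separate proof of Corollary~\ref{corglAL+}, merely noting (just before Corollary~\ref{cor22+}) that both it and Corollary~\ref{cor22+} are the logarithmic reparametrizations of Corollaries~\ref{coref+} and~\ref{corgl+} via $x=\log r$ and Proposition~\ref{prgf}. Your handling of the degenerate case $f(0)=0$ by relocating $z_0$ in Theorem~\ref{th:1} is a legitimate extra detail that the paper glosses over.
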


\begin{remark}\label{remL+}  Совпадает с Замечанием \ref{remL}.
\end{remark}

\begin{remark}\label{remLA+}  Любое из Следствий \ref{coref+}--\ref{corglAL+} содержит в себе Теорему B с условиями Бляшке \eqref{Bcnu}, \eqref{BcZ} из Введения.
\end{remark}

\begin{remark}\label{remLA++}    Гомотетия с центром в нуле и инверсия $\star$ из Введения сохраняют субгармоничность и выпуклость относительно $\log$. Эти отображения для любого $R\in (0,+\infty)$ без труда переносят результаты этого раздела \ref{caseDD}   на каждый  из случаев  $D=D(R)$ и  $\CC_{\infty}\setminus D(R)$.
\end{remark}

\subsection{{Случай $D=A(r_1,r_2)$ --- кольцо}}\label{Sann}  Всюду в этом разделе 
\begin{equation}\label{dff:r12}
	0\leq	r_1<r'<r'' <r_2\leq +\infty .
\end{equation}
Утверждения  этого раздела легко получаются  простой интеграцией  результатов разделов  
\ref{ssCrC}--\ref{caseDD}  с использованием, когда необходимо, замен переменных,  в частности инверсии. Поэтому доказательства и обоснования опускаем.

\begin{propos}\label{prann} Пусть функция $m\colon (r_1,r']\cup [r'',r_2)\to \RR$ 
\begin{enumerate}[{\rm 1)}] 
	\item 	убывающая на $(r_1,r']$, обладает одним (любым)  из попарно эквивалентных свойств\/ {\rm \ref{2s}--\ref{5s}} Предложения {\rm \ref{prsbr}}  при  $r_2=r'$, непрерывна слева в точке $r'$,
	\item возрастающая  на $[r'', r_2)$, обладает одним (любым)  из попарно эквивалентных свойств\/ {\rm \ref{2s}--\ref{5s}} Предложения {\rm \ref{prsbr}}  c  $r_1=r''$, непрерывна справа в  $r''$. 
\end{enumerate}
Предположим, что существует число\footnote{Число $m_0$, удовлетворяющее условию \eqref{m0m},  всегда существует, если $\lim\limits_{r_1<r\to r_1}m(r)=\lim\limits_{r_2>r\to r_2}m(r)=+\infty$. В противном случае этого можно добиться,   добавив число к одной из <<ветвей>> функции $m$ на $(r_1,r']$ или на $[r'',r_2)$. Здесь  такое изменение функции $m$, р\'авно как и уменьшение $r'$ и увеличение $r''$ в рамках условия \eqref{dff:r12}, не играет никакой  роли.}
\begin{equation}\label{m0m}
	m_0\in m\bigl( (r_1,r'] \bigr)\cap m\bigl( [r'',r_2) \bigr)\neq \varnothing.
\end{equation}
Тогда, уменьшая  $r'$ или/и увеличивая $r''$ в рамках условия \eqref{dff:r12}, легко  добиться того, что $m(r')=m(r'')=m_0$,  для исходной функции $m$, доопределённой  значениями $m$ на $(r',r'')$,   существуют $m_{\leftd}'(r)$ и $ m_{\rightd}'(r)$ при всех $r\in (r_1,r_2)$. При этом  радиальная функция
\begin{equation}\label{forannM}
	M(z):=\begin{cases}
		m\bigl(|z|\bigr) \quad&\text{при $|z|\in (r_1,r']\cup [r'',r_2)$}, \\
		m_0\quad&\text{при $r' <|z|<r'$},
	\end{cases}
	\quad x\in A(r_1,r_2),
\end{equation}
--- субгармоническая, представление   \eqref{mnrep} для любого $r_0\in (r',r'')$  
имеет вид 
\begin{equation*}
	M(r)=m_0+\int_{r_0}^r \frac{n_0(t)}{t}\, dt, \quad  r\in (r_1,r_2),
\end{equation*}
где возрастающую функцию $n_0\colon (r_1,r_2)\to \RR$ можно задать и как  непрерывную справа (или слева)  
равенствами $n_0(r):= r\,m_{\rightd}'(r)$  {\rm \large(}соответственно $n_0(r):= r\,m_{\rightd}'(r)${\rm \large)} 
во всех точках  $r\in (r_1,r_2)$. 

Плотность  меры Рисса  субгармонической функции \eqref{forannM} можно задать, по  Замечанию\/ {\rm \ref{r11}},  
равенствами 
\begin{equation*}\label{df:del++}
	\frac{1}{2\pi}\,(\Delta M) (re^{i\theta})\,d \lambda (re^{i\theta})=
	\begin{cases}
		d\bigl(rm_{\leftd}'(r)\bigr)\otimes \frac{1}{2\pi} \,d\theta\geq 0  &\text{на кольце $A(r_1,r')$},\\
		r'm_{\leftd}'(r') \, \frac{1}{2\pi} \, d\theta &\text{на окружности   $\partial D(r')$}, \\
		0  &\text{на кольце  $A(r',r'')$},\\
		r''m_{\rightd}'(r'') \, \frac{1}{2\pi} \, d\theta &\text{на окружности   $\partial D(r'')$}, \\
		d\bigl(rm_{\rightd}'(r)\bigr)\otimes \frac{1}{2\pi} \,d\theta\geq 0  &\text{на кольце $A(r'',r_2)$}.
	\end{cases}
\end{equation*}
\end{propos}

Для   пары чисел $b_1,b_2\in \RR^+$ введем класс $\monot^+(r_1,r',r'',r_2;\leq b)$ {\it положительных\/} функций 
$d\colon (r_1,r_d']\cup [r_d'',r_2)\to \RR$ с  $r'<r_d'\leq r_d''<r''$, {\it возрастающих на\/}  $(r_1,r_d']$ и {\it убывающих на\/} $ [r_d'',r_2)$,  {\it с ограничениями\/} 
\begin{equation*}\label{ir01d+}
 \int_{r_1}^{r'} \frac{d(t)}{t}\, dt \leq b_1 , \quad \int_{r''}^{r_2} \frac{d(t)}{t}\, dt \leq b_2.
\end{equation*} 

\begin{propos}\label{dvtr+}  Если $d\in \monot^+(r_1,r',r'',r_2;\leq b)$, то функция 
\begin{equation}\label{dfvdcd++}
	v(z):=
	\begin{cases}
		\int\limits_{r_1}^{|z|}d(t) \, \dfrac{dt}{t} \quad&\text{при  $r_1<|z|<r_d'$},\\
		\\
		\int\limits_{|z|}^{r_2}d(t) \, \dfrac{dt}{t} \quad&\text{при  $r_d''<|z|<r_2$},\\
	\end{cases}
	\qquad z\in A(r_1,r_2)\setminus A(r',r''), 
\end{equation}
принадлежит  классу   $\sbh_0^+\bigl(A(r_1,r_2)\setminus A(r',r'');\leq b\bigr)$, определённому в\/ \eqref{sbh+}.
\end{propos}
\begin{corollary}\label{corAnn} Пусть $m$ --- функция из Предложения {\rm \ref{prann}}, а ненулевая функция   $f\in \Hol \bigl(A(r_1,r_2)\bigr)$ обращается в нуль на последовательности точек ${\tt Z}=\{{\tt z}_k\}_{k=1,2,\dots}$ и  $\log \bigl|f(z)\bigr|\leq m\bigl(|z|\bigr)$ 
 при всех $z\in  A(r_1,r_2)\setminus A(r',r'')$. Тогда  для любых  пар чисел $b_1,b_2\geq 0$  и числа $z_0\in A(r',r'')$ найдутся  постоянные\footnote{Зависимость этих постоянных от величин из \eqref{dff:r12} не помечена.} 
\begin{equation}\label{dCCMR++}
	C:=\const_{z_0,b_1,b_2}^+, \quad \overline{C}_m:=\const_{z_0,b_1,b_2,m}^+,
\end{equation}
с которыми для  любой функции $d\in \monot^+(r_1,r',r'',r_2;\leq b)$ имеем
\begin{multline*}
				\left(\,\sum_{ r_1 <	|{\tt z}_k|\leq r'} \int_{r_1}^{|{\tt z}_k|} + 
				\sum_{ r'' \leq	|{\tt z}_k|<r_2} \int_{|{\tt z}_k|}^{r_2} \,\right)\frac{d(t)}{t} \, d t\\
			\leq  		-\int_{r_1}^{r'} d(t)m'_{\leftd}(t) \, dt
			+\int_{r''}^{r_2} d(t)m'_{\rightd}(t) \, dt 			+C\overline{C}_m -C\log \bigl|f(z_0)\bigr|,
	\end{multline*}
а постоянная  	$\overline{C}_m$ положительно однородная  и  полуаддитивная сверху по $m$.
\end{corollary}

\begin{corollary}\label{corgl++} В условиях Следствия\/ {\rm \ref{corAnn}} для  любого $b\in \RR^+$ и  любой функции 
$d\in \monot^+(r_1,r',r'',r_2;\leq b)$  справедлива импликация 
\begin{multline*}
\left(\,	\int_{r_1}^{r'} d(t)m'_{\leftd}(t) \, dt +\int_{r''}^{r_2} d(t)m'_{\rightd}(t) \, dt <+\infty	\, \right) 
\\
\Longrightarrow
	\left(\,\sum_{ r_1 <	|{\tt z}_k|\leq r'} \int_{r_1}^{|{\tt z}_k|} \frac{d(t)}{t} \, d t+
			\sum_{ r'' \leq	|{\tt z}_k|<r_2} \int_{|{\tt z}_k|}^{r_2} \frac{d(t)}{t} \, d t  <+\infty \, \right).
\end{multline*}
\end{corollary}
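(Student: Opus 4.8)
The plan is to read this off immediately from the quantitative estimate of Corollary~\ref{corAnn}, in exact parallel with the way Corollary~\ref{corgl} is obtained from Corollary~\ref{coref} and Corollary~\ref{corgl+} from Corollary~\ref{coref+}: once one has a bound of the series by the $m$-data integrals plus a finite constant, the qualitative implication is purely a matter of reading ``finite right-hand side $\Rightarrow$ finite left-hand side''.

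First I would fix $b\in\RR^+$ and a monotone density $d\in\monot^+(r_1,r',r'',r_2;\leq b)$ as in the hypothesis, together with its two associated bounds $b_1,b_2\geq 0$. Since $f\neq 0$ is holomorphic, its zero set is discrete, while the open annulus $A(r',r'')$ is non-empty; hence I may select a base point $z_0\in A(r',r'')$ with $f(z_0)\neq 0$, so that $\log\bigl|f(z_0)\bigr|\in\RR$. Applying Corollary~\ref{corAnn} with this $z_0$ and these $b_1,b_2$ yields finite constants $C:=\const_{z_0,b_1,b_2}^+$ and $\overline{C}_m:=\const_{z_0,b_1,b_2,m}^+$ and the inequality bounding
\begin{equation*}
\sum_{ r_1 <|{\tt z}_k|\leq r'} \int_{r_1}^{|{\tt z}_k|}\frac{d(t)}{t}\,dt
+\sum_{ r'' \leq|{\tt z}_k|<r_2} \int_{|{\tt z}_k|}^{r_2}\frac{d(t)}{t}\,dt
\end{equation*}
from above by the two $m$-data integrals on its right-hand side plus the finite term $C\overline{C}_m-C\log\bigl|f(z_0)\bigr|$.

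By Proposition~\ref{prann} the profile $m$ decreases on $(r_1,r']$ and increases on $[r'',r_2)$, so the two data integrands keep a fixed sign and the data integrals are non-negative; the convergence assumed in the hypothesis is precisely their finiteness. Consequently, under that assumption the whole right-hand side of the estimate of Corollary~\ref{corAnn} is finite, and therefore so is the series on its left-hand side, which is the asserted conclusion. I expect no genuine analytic obstacle here: the only point demanding care is the finiteness of the additive constant, which is secured by choosing $z_0$ off the zero set of $f$ (keeping $\log\bigl|f(z_0)\bigr|$ finite) and by the finiteness of $\overline{C}_m=\const_{z_0,b_1,b_2,m}^+$ already recorded in Corollary~\ref{corAnn} and ultimately in Theorem~\ref{th:1} (valid once the auxiliary domain is taken with $\widetilde{D}\Subset D$). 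The remainder is bookkeeping.
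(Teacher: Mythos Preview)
Your approach is correct and is exactly what the paper intends: it omits all proofs in the annulus subsection~\ref{Sann}, remarking that they are obvious modifications of those in \S\S\ref{ssCrC}--\ref{caseDD}, so reading Corollary~\ref{corgl++} off the estimate of Corollary~\ref{corAnn} just as Corollary~\ref{corgl} follows from Corollary~\ref{coref} (and Corollary~\ref{corgl+} from Corollary~\ref{coref+}) is precisely the intended argument.

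One slip to correct. Since $m$ \emph{decreases} on $(r_1,r']$, the integrand $d(t)m'_{\leftd}(t)$ there is non\emph{positive}, so the first data integral in the hypothesis of Corollary~\ref{corgl++} is $\leq 0$, not $\geq 0$; it is the quantity $-\int_{r_1}^{r'} d(t)m'_{\leftd}(t)\,dt$ actually appearing on the right of Corollary~\ref{corAnn} that is non-negative. Hence the literal condition ``sum $<+\infty$'' does not by itself rule out $\int_{r_1}^{r'} d(t)m'_{\leftd}(t)\,dt=-\infty$, and in that case the right-hand side of Corollary~\ref{corAnn} would be $+\infty$ and nothing would follow. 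Read the hypothesis as asserting that the sum is a finite real number (equivalently, that both integrals are finite --- the evident two-sided analogue of the single-integral hypotheses in Corollaries~\ref{corgl} and~\ref{corgl+}); then both $-\int_{r_1}^{r'}$ and $\int_{r''}^{r_2}$ are finite, and your argument goes through verbatim.
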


Отметим  также и  альтернативные  варианты  Следствий \ref{corAnn} и \ref{corgl++}, основанные на эквивалентности \ref{1s}$\Leftrightarrow$\ref{2s} Предложения \ref{prsbr}, замене  переменных $x=\log r$ при $r\in (r_1,r']\cup [r'',r_2)$ и $x\in  (x_1,x']\cup [x'',x_2)$ с 
\begin{equation}\label{x12}
	-\infty \leq \,x_1:=\log r_1\, <\, x':=\log r' \, <\, x'':=\log r'' \, <\, x_2:=\log r_2 \, \leq +\infty 
\end{equation}
и Предложении \ref{prgf}. В обозначениях  \eqref{x12} имеет место

\begin{corollary}\label{cor22++} Пусть   $q\colon (x_1,x'] \cup [x'', x_2) \to \RR$ --- выпуклая функция, убывающая на $(x_1,x']$ и возрастающая  на $[x'', x_2)$,  $q\bigl((x_1,x']\bigr) \cap q\bigl([x'', x_2)\bigr)\neq \varnothing$.  Пусть ненулевая функция   $f\in \Hol \bigl(A(e^{x_1},e^{x_2})\bigr)$   обращается  в нуль на ${\tt Z}=\{{\tt z}_k\}_{k=1,2,\dots}$ и   $\log \bigl|f(z)\bigr|\leq q\bigl( \log |z|\bigr)$ при  всех $z\in A(e^{x_1}, e^{x_2})\setminus 
A(e^{x'}, e^{x''})$. Тогда  для любой пары чисел $b_1,b_2\geq 0$ и числа $z_0\in A(e^{x'}, e^{x''})$ найдутся постоянные 
\eqref{dCCMR++} с $q$ и  $\overline{C}_q$ вместо $m$ и $\overline{C}_m$ , зависящие  от чисел \eqref{x12}, с которыми  для любой выпуклой   функции  $v\colon (x_1,x'] \cup [x'', x_2) \to \RR$, возрастающей на $(x_1,x']$ и убывающей на $[x'', x_2)$ с ограничениями  $\lim_{x_1< x\to x_1}v(x)=\lim_{x_2>x\to x_2}v(x) =0$ и $v(x')\leq b_1$, $v(x'')\leq b_2$
 справедливо неравенство
\begin{multline*}
				\left(\, \sum_{ e^{x_1} <	|{\tt z}_k|\leq e^{x'}} +
			\sum_{e^{x''} \leq	|{\tt z}_k|<e^{x_2}} \right)
			v\bigl(\log|{\tt z}_k|\bigr)
			\\
			\leq  	
				-\int_{{x_1}}^{x'} v_{\rightd}'(x)q'_{\leftd}(x) \,dx 				-\int_{x''}^{x_2} v_{\leftd}'(x)q'_{\rightd}(x) \,dx 						+C\overline{C}_m -C\log \bigl|f(z_0)\bigr|,
	\end{multline*}
а постоянная  	$\overline{C}_q$ положительно однородная  и  полуаддитивная сверху по $q$.
\end{corollary}

\begin{corollary}\label{corglAL++} В условиях Следствия\/ {\rm \ref{cor22++}} для любой выпуклой   функции  $v\colon (x_1,x'] \cup [x'', x_2) \to \RR$, возрастающей на $(x_1,x']$ и убывающей на $[x'', x_2)$ с ограничениями  $\lim_{x_1< x\to x_1}v(x)=\lim_{x_2>x\to x_2}v(x) =0$ и $v(x')\leq b_1$, $v(x'')\leq b_2$  истинна импликация
\begin{multline*}
\left(\,	 	\int_{{x_1}}^{x'} v_{\rightd}'(x)q'_{\leftd}(x) \,dx 				+\int_{x''}^{x_2} v_{\leftd}'(x)q'_{\rightd}(x) \,dx >-\infty\,\right)
\\
\Longrightarrow
	\left(\,\Biggl(\, \sum_{ e^{x_1} <	|{\tt z}_k|\leq e^{x'}} + 			\sum_{e^{x''} \leq	|{\tt z}_k|<e^{x_2}} \,\Biggr) 	v\bigl(\log|{\tt z}_k|\bigr) <+\infty\,\right).
\end{multline*}
\end{corollary}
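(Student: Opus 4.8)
The plan is to obtain the implication directly from the quantitative estimate of Corollary~\ref{cor22++}, in exactly the way Corollaries~\ref{corglAL} and \ref{corglAL+} are read off from Corollaries~\ref{cor22} and \ref{cor22+}. The only real content is a sign check for the two integrals formed from the one-sided derivatives of $v$ and $q$.

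First I would record those signs. On $(x_1,x']$ the function $v$ is increasing, so $v_{\rightd}'\geq 0$, whereas $q$ is decreasing, so $q_{\leftd}'\leq 0$; hence $v_{\rightd}'(x)\,q_{\leftd}'(x)\leq 0$ and $\int_{x_1}^{x'} v_{\rightd}'(x)\,q_{\leftd}'(x)\,dx\leq 0$. Symmetrically, on $[x'',x_2)$ one has $v_{\leftd}'\leq 0$ (as $v$ is decreasing) and $q_{\rightd}'\geq 0$ (as $q$ is increasing), so $\int_{x''}^{x_2} v_{\leftd}'(x)\,q_{\rightd}'(x)\,dx\leq 0$. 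Both integrals appearing in the hypothesis are thus nonpositive, so their sum is $-\infty$ iff one of them is; consequently the assumption that the sum exceeds $-\infty$ is equivalent to the finiteness of each integral separately.

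Next I would fix a point $z_0\in A(e^{x'},e^{x''})$ with $f(z_0)\neq 0$ --- possible since $f\not\equiv 0$ has isolated zeros --- and apply Corollary~\ref{cor22++} to this $z_0$ with $b_1:=v(x')$ and $b_2:=v(x'')$ (both finite, as $v$ is $\RR$-valued). The right-hand side of the estimate furnished by that corollary is $-\int_{x_1}^{x'} v_{\rightd}'q_{\leftd}'\,dx-\int_{x''}^{x_2} v_{\leftd}'q_{\rightd}'\,dx+C\overline{C}_q-C\log|f(z_0)|$; by the previous step the two integrals are finite, the constants $C$ and $\overline{C}_q$ are finite by construction, and $\log|f(z_0)|$ is finite by the choice of $z_0$, so this bound is a finite real number.

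Finally, since $v$ is nonnegative on its domain (increasing from the boundary value $0$ at $x_1$ and decreasing to $0$ at $x_2$), the left-hand sum has nonnegative terms and is dominated by this finite bound, hence converges; this is precisely the desired conclusion. I expect no genuine obstacle: all analytic substance is already carried by Corollary~\ref{cor22++}, and the step to the implication form is nothing more than the nonpositivity of the integrands together with the nonnegativity of $v$.
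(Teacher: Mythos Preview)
Your proposal is correct and matches the paper's own treatment: the paper does not give a separate proof of Corollary~\ref{corglAL++} but presents it as the implication form of the quantitative estimate in Corollary~\ref{cor22++}, obtained exactly as Corollaries~\ref{corglAL} and~\ref{corglAL+} are obtained from Corollaries~\ref{cor22} and~\ref{cor22+}. Your sign analysis, the choice of $z_0$ with $f(z_0)\neq 0$, and the setting $b_1=v(x')$, $b_2=v(x'')$ are precisely the steps needed to pass from the inequality to the implication.
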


\begin{remark}\label{remL+++}  Совпадает с Замечанием \ref{remL}.
\end{remark}
\begin{remark} Основная  Теорема, Теорема  \ref{th:1} и Следствие \ref{cor:1} позволяют использовать радиальные тестовые функции 
\eqref{dfvdc}, \eqref{dfvdcd+},  \eqref{dfvdcd++} соответственно для плоскости $\CC$, круга $\DD$ и кольца, а также их альтернативные версии из Следствий  \ref{cor22}, \ref{cor22+}, \ref{cor22++} вида $v\circ \log$ с выпуклой функцией $v$  не только для радиальных мажорант $M$, но и для произвольных ($\delta$-)субгармонических мажорант $M$ в таких областях. 
\end{remark}

\section{Тестовые функции}\label{S6tf}

Уже по Теореме   B из п.~\ref{111} Введения видно, что как самую элементарную тестовую функцию можно рассматривать функцию Грина 
$g_{D'} (\cdot, z_0)$ для произвольной подобласти $D'\subset D$. 
Аналогично,  в исследованиях Коренблюма--Сейпа распределения нулей 
в равномерных пространствах Бергмана в круге $\DD$ роль тестовых функций 
играли    функции Грина (с полюсом в нуле) специальных конечных объединений открытых кругов из $\DD$, касающихся окружности $\partial \DD$ изнутри в конечном числе точек и содержащих в себе некоторый фиксированный круг 
$D(r_0)\Subset \DD$ {\large(}см. \cite[4]{HKZh}, \cite[\S~5]{KuKh09}{\large)}.  

Здесь мы укажем способы конструирования широких классов тестовых функций, далёких от радиальных.  
Очень полезной для этого  будет установленная Дж.~Гардинером с дополнениями М.~Климека 
{\large(}см. также Ю.~Риихентаус \cite{JR}{\large)}
\vskip 1mm
	
{\sc Теорема G--K} {\cite[Theorem 2.6.6]{Klimek}}. {\it 
Пусть $\mathcal O\subset \CC_{\infty}$ --- открытое множество,  $g\colon {\mathcal O}\to (0,+\infty)$.  
Если выполнено одно из трёх условий
\begin{enumerate}[{\rm(i)}]
	\item\label{ih} $s\in \Har ({\mathcal O}), g\in \Har({\mathcal O})$, $f\colon \RR\to\RR$ --- выпуклая функция;      
	\item\label{iih} $s\in \sbh ({\mathcal O}), g\in \Har({\mathcal O})$, $f\colon \RR\to\RR$ --- выпуклая возрастающая функция с интерпретацией $f(-\infty):=\lim\limits_{x\to -\infty}f(x)$; 
	\item\label{iiih} $s\in \sbh^+({\mathcal O})$, $-g\in \sbh({\mathcal O})$, $f\colon \RR^+\to \RR^+$ --- выпуклая функция с $f(0)=0$; 
\end{enumerate}
 то $g\,f(s/g)\in \sbh ({\mathcal O})$.}

\noindent
Ниже в подразделе \ref{OLsf} будет дано новое доказательство этого результата --- более сложное, чем оригинальное, но позволяющее явно выписывать меры Рисса субгармонических функций   $g\,f(s/g)\in \sbh ({\mathcal O})$ из заключения  Теоремы G--K.   

Отметим также очень давно известные  частные случаи Теоремы G--K, соответствующие случаю $g=1$ --- тождественная единица.
\vskip 1mm
	
{\sc Теорема C} {\cite[I.II.9]{Doob}}. {\it 
Пусть $\mathcal O\subset \CC_{\infty}$  открытое, $-\infty\leq a<b\leq +\infty$, $F\colon (a,b)\to \RR$ --- выпуклая функция. Если выполнено одно из двух условий
\begin{enumerate}[{\rm(i)}]
	\item\label{ihF} функция $s\colon \mathcal O \to (a,b)$ из пространства $\Har ({\mathcal O})$;      
	\item\label{iihF} $s\colon \mathcal O \to [a,b)$, $s\in \sbh ({\mathcal O})$ и  $F$ --- возрастающая функция, доопределенная  в точке $a$ по непрерывности $F(a):=\lim\limits_{a<x\to a}F(x)$; 
	\end{enumerate}
то суперпозиция $F\circ s \in \sbh (\mathcal O)$.} 
\subsection{Построение тестовых функций}\label{sstf}  Пользуемся различными версиями Теоремы G--K. При этом, как правило, роль функций $g$, а часто и $s$, будут играть именно функции Грина  с одним и тем же фиксированным полюсом $z_0\in D_0\Subset D$ различных областей, включающих в себя $D_0$. Поэтому  точку $z_0$ как полюс в обозначениях функций Грина зачастую  не указываем. 

\subsubsection{\underline{На основе гармонических функций}}\label{tfi} 
\begin{propos}\label{prgg}
Пусть  $D$ --- регулярная область с функцией Грина $g_D$, $s\in \Har \bigl(D\setminus D_0)$, 
а $f\colon \RR \to \RR^+$ ---   выпуклая функция, ограниченная на образе   $ (s/g_D) (D\setminus D_0)$. Тогда  $g_D\, f(s/g_D)\overset{\eqref{mint}}\in \sbh_0^+\bigl(D\setminus D_0\bigr)$ --- тестовая функция.  В качестве функции $s$ можно выбирать, например, $s=1$ или функцию Грина $g_{\widehat{D}}$  произвольной  области $\widehat{D}$ с неполярной границей, включающей в себя область $D$.
В частности, в последнем  случае по принципу подчинения \cite[Corollary 4.4.5]{Rans} для функций Грина (с одинаковыми полюсами) при $D\subset \widehat{D}$ всегда 
\begin{equation}\label{est11}
\frac{g_{\widehat{D}}}{g_D}\geq 1 \quad \text{на $D$}.
\end{equation}
\end{propos}  
\begin{proof} По п.~\eqref{ih} Теоремы G--K и в силу положительности $f$ имеем $g_D\, f(s/g_D)\in \sbh^+\bigl(D\setminus D_0\bigr)$. При этом ввиду ограниченности функции $f$ на образе $(s/g_D)(D\setminus D_0)$ и стремления к нулю функции Грина $g_D$ при приближении к границе $\partial D$ видим, что $g_D\, f(s/g_D)$ --- тестовая функция.  
\end{proof}
\begin{example}\label{exg} Рассмотрим регулярную область $\widehat{D}\supset D$, число $p>0$, 
выпуклую убывающую положительную на $\RR$ функцию 
\begin{equation*}
	f_p(x):=\begin{cases}
		x^{-p} &\text{ при $x\geq 1$},\\
	-p(x-1)+1 &\text{ при $x\leq   1$},
	\end{cases}
	\end{equation*}
    а также выпуклую убывающую функцию $x\mapsto e^{-px}$, $x\in \RR$. 
Тогда по Предложению \ref{prgg}  согласно \eqref{est11} две функции 
\begin{equation}\label{gdgex}
	g_D\left(\frac{g_{\widehat{D}}}{g_D}\right)^{-p}=
	g_D \cdot g_D^p \cdot g_{\widehat{D}}^{-p} \overset{\eqref{est11}}{\leq} g_D, \quad 
	g_D\exp\Bigl(-p(g_{\widehat{D}} / g_{D})\Bigr)\leq g_D
\end{equation}
--- примеры  нетривиальных тестовых функций  класса 
$\sbh_0^+\bigl(D\setminus \{z_0\}\bigr)$.
 
\end{example}
\subsubsection{\underline{На основе гармонической и субгармонической функций}}\label{tfii}
\begin{propos}\label{prggs}
Пусть функция  $g\in \Har (D\setminus D_0)$ строго положительна и ограничена на $D\setminus D_0$, $f\colon \RR \to \RR^+$ --- выпуклая возрастающая функция  с   интерпретацией $f(-\infty):=\lim\limits_{x\to -\infty}f(x)$.
Пусть   $s\in \sbh (D\setminus D_0)$ и выполнено одно из условий:
	
	\begin{enumerate}[{\rm (a)}]
		\item\label{ia} для некоторого $x_0\in [-\infty,+\infty)$  имеем $f(x_0)=0$ и  существует предел
	\begin{equation}\label{sgff}
	\lim_{z\to \partial D} \frac{s(z)}{g(z)}\overset{\eqref{est:u0}}{=}x_0 
\end{equation}
\item\label{ib} образ $(s/g)(D\setminus D_0)$ ограничен и $\lim\limits_{{z\to \partial D}} g(z)=0$.
\end{enumerate}
 Тогда $g\,f(s/g)\in \sbh_0^+(D\setminus D_0)$.
\end{propos}
Доказательство опускаем, поскольку оно легко следует из п.~\eqref{iih}
Теоремы G--K по аналогии с доказательством Предложения \ref{prgg}  
с той лишь разницей, что стремление к нулю для $g\,f(s/g)$ при приближении границы $\partial D$ достигается в случае \eqref{ia} за счёт условия 
\eqref{sgff} в сочетании с  $f(x_0)=0$, а в случае \eqref{ib} за счет стремления $g(z)$ к нулю при $z\to \partial D$ и ограниченности образа
$\bigl(f(s/g)\bigr)(D\setminus D_0)$.
\begin{example}[\rm(скорее частный случай Предложения \ref{prggs})]\label{ex22}  Пусть $\widehat{D}$ --- область и 
$D\Subset \widehat{D}$, $g\in \Har \bigl(\widehat{D}\setminus D_0\bigr)$ и $g>0$ на $D\setminus D_0$, к примеру, $g=g_{\widehat{D}}$ --- функция Грина области $\widehat{D}$. В условиях Предложения \ref{prggs}\eqref{ia} замена условия \eqref{sgff} на $f(0)=0$ и $\lim\limits_{z\to \partial D}s(z)=0$   даёт тот же результат.

Другой вариант --- выбор в п.~\eqref{ib}  в качестве функции $s$ продолженной функции Грина $g_{D'}(\cdot, z_0)$ для подобласти  $D'\subset D$ с неполярной границей, а в роли $g$ --- функции Грина $g_D(\cdot, z_0)$. В этом случае  $0\leq g_{D'}/g_D \leq 1$ на $D\setminus \{z_0\}$.
\end{example}
\subsubsection{\underline{На основе супер- и субгармонической функций}}\label{tfiii}
\begin{propos}\label{prggscr}
Пусть  функция $g>0$ на $D\setminus D_0$, $g\in -\sbh (D\setminus D_0)$, т.\,е. $g$ супергармоническая,   $s\in \sbh^+ (D\setminus D_0)$, $f\colon \RR^+ \to \RR^+$ --- выпуклая возрастающая функция  с  условием $f(0)=0$ и выполнено одно из трех условий
\begin{enumerate}[{\rm (a)}]
\item\label{ha} $\lim\limits_{z\to \partial D} {g(z)}\overset{\eqref{est:u0}}{=}0$ и образ  $ (s/g)(D\setminus D_0)$ ограничен;
\item\label{hb} выполнено соотношение \eqref{sgff} и функция $g$ ограничена на $D\setminus D_0$;
\item\label{hc} $s\in \sbh_0^+(D\setminus D_0)$, $\liminf\limits_{z\to \partial D} g(z)>0$ и $g$ ограничена сверху на $D\setminus D_0$.  
\end{enumerate}
 Тогда $g\,f(s/g)\in \sbh_0^+(D\setminus D_0)$ --- тестовая функция.
\end{propos}
Доказательство также опускаем, поскольку рассуждения для варианта \eqref{ha} идентичны доказательству Предложения \ref{prgg}, а для варианта \eqref{hb} --- доказательству  Предложения \ref{prggs}.
Вариант же \eqref{hc} --- частный случай варианта \eqref{hb}.
\begin{example}\label{exgDs} Пусть  подобласть $D'\subset D$ регулярная, $z_0\in D_0\Subset D$,  $s:=g_{D'}:=g_{D'}(z,z_0)$ --- продолженная нулём на $\CC_{\infty} \setminus D'$ функция Грина для $D'$ с полюсом $z_0$, $g=1$. Тогда для  $f$ из  Предложения  \ref{prggscr}\eqref{hc} функция $f\circ g_{D'}$ тестовая из  класса  $\sbh_0^+(D\setminus D_0; \leq b)$, где 
$b:=\sup\limits_{\partial D_0}\,(f\circ g_{D'})$.
\end{example}

\begin{example}\label{exp1gd} Пусть $D$ --- регулярная область в $\CC_{\infty}$ с функцией Грина $g_D=g_D(\cdot,z_0)$. Тогда  
для положительной функции $g:=\log (1+g_D)$, очевидно, выполнено условие \eqref{ha} Предложения  \ref{prggscr} и вне точки $z_0\in D_0$ 
функция $-\log (1+g_D) \in \sbh\bigl(D\setminus \{z_0\}\bigr)$ по Предложению C\eqref{ihF} как суперпозиция выпуклой функции $-\log$ и гармонической в $D\setminus \{z_0\}$ функции $1+g_D$, 
т.\,е. \textit{для любой функции Грина\/ $g_D (\cdot,z_0)$ функция\/ $\log (1+g_D)$ супергармонична}. Следовательно,  из Предложения \ref{prggscr} {\it для любой функции $s\in \sbh^+(D\setminus D_0)$ при ограниченности  образа $\bigl(s/\log (1+g_D)\bigr)(D\setminus D_0)$ для выпуклой возрастающей  функции  $f\colon \RR^+\to \RR^+$  с $f(0)=0$ функция 
\begin{equation}\label{loggdt}
	 f\Bigl(\frac{s}{\log (1+g_D)}\Bigr)\log(1+g_D)   \quad\text{--- тестовая функция из $\sbh_0^+(D\setminus D_0)$}. 
\end{equation}
}
 \end{example}

\subsubsection{\underline{Гиперболический  радиус и тестовые функции}}\label{excr} Мо\-т\-и\-в\-и\-р\-о\-вано консу\-льтациями  Ф.\,Г. Авхадиева \cite[\textbf{3.1}]{AvW}, \cite{Av15}.  Пусть $D
\subset \CC_{\infty}$---  область гиперболического типа, т. е. область, имеющая не менее трёх граничных точек на $\CC_{\infty}$. Тогда в любой точке $z\in D$ определен гиперболический, или конформный в случае односвязности $D\not\ni \infty $,  радиус $R(z,D) = \dfrac{1}{\lambda_D(z)}$, $z\in D$,  где $\lambda_D$ – коэффициент метрики Пуанкаре с гауссовой кривизной $=-4$. 
 Известно, что гиперболический радиус $R(\cdot,D)$ строго положителен на $D$ и в случае, когда $\infty \notin \partial D$, обладает свойством 
$\lim\limits_{z\to \partial D}R(z,D)\overset{\eqref{ha}}{=}0$. 

Далее, как обычно, $\nabla$ --- оператор <<набла>>, или Гамильтона, реализирующий градиент; $\Delta=\nabla\cdot \nabla =\nabla^2$ --- оператор Лапласа. 

Гиперболический радиус удовлетворяет уравнению Лиувилля
\begin{equation}\label{RdR}
	R\Delta R=|\nabla R|^2-4, \quad \text{$\nabla R$ --- градиент $R$},
\end{equation}
которое ввиду легко устанавливаемого тождества $\Delta (R^2)=2R\Delta R+2|\nabla R|^2$ {\large(}см. неоднократно употребляемое ниже тождество \eqref{{seq}s}{\large)} может быть записано и  в иных эквивалентных \eqref{RdR} формах:
  \begin{equation*}
	4R\Delta R=\Delta (R^2) -8 \quad \Longleftrightarrow \quad  \Delta(R^2)=4|\nabla R|^2-8.
 \end{equation*}

По Теореме Лёвнера \cite[Theorem 3.23]{AvW} собственная подобласть в $\CC$ выпукла тогда и только тогда, когда 
\begin{equation*}
	\sup_{z\in D} |\nabla R(z,D)|\leq 2.
\end{equation*}
Таким образом, ввиду \eqref{RdR} и по  Предложению  \ref{prggscr}  имеем
\begin{example}\label{ex:Rconv}
Для собственной \underline{выпуклой} подобласти  $D\subset \CC$ с\/ $\infty \notin \partial D$ её {\it конформный радиус $R$ супергармоничен\/} и {\it для произвольной   $s\in \sbh^+( D\setminus D_0)$ при ограниченности образа $ (s/R) (D\setminus D_0)$ с выпуклой возрастающей функцией $f\colon \RR^+\to \RR^+$ с $f(0)=0$   функция $Rf(s/R)$ тестовая.}
\end{example}
Другой подобный \eqref{loggdt} пример  с участием гиперболического радиуса $R$ области $D$ возможен, когда a priori выполнена верхняя оценка  
\begin{equation}\label{hypR}
	\sup_{z\in D} (\Delta R)(z)\leq A<+\infty \quad \text{и \; $A\leq 4N\in \RR$}.
\end{equation}
По сообщению  Ф.\,Г. Авхадиева  достаточное условие для этого --- граница $\partial D$ класса $C^2$ с гельдеровой кривизной, но можно допускать и угловые точки с нетупым  углом, т.\,е.  локально выпуклые в окрестности угловой точки. 

\begin{example}\label{exp1gdR} Пусть выполнено условие \eqref{hypR}. Тогда
\begin{multline}\label{dhff}
\Delta \log (1+NR)=\nabla\cdot \nabla \log(1+NR)=\nabla \cdot \frac{\nabla (1+NR)}{1+NR}\\=
\frac{(1+NR)\nabla\cdot \nabla (1+NR)- \nabla (1+NR)\cdot \nabla (1+NR)}{(1+NR)^2}\\=
\frac{(1+NR)\Delta (NR)-|\nabla NR|^2}{(1+NR)^2} =
\frac{N\Delta R+N^2R\Delta R-N^2|\nabla R|^2}{(1+NR)^2}
\\ \overset{\eqref{RdR}}{=} \frac{N\Delta R-4N^2}{(1+NR)^2} =N\frac{\Delta R-4N}{(1+NR)^2} \overset{\eqref{RdR}}{\leq} 
N\frac{A-4N}{(1+NR)^2} \leq 0
\end{multline}
Таким образом, {\it в условиях   \eqref{hypR} функция $\log (1+NR)$ супергармоническая и при $\infty \notin \partial D$\/} по Предложению 
\ref{prggscr} {\it для любой функции $s\in \sbh^+(D\setminus D_0)$ при ограниченности образа
$\bigl(s/\log (1+NR)\bigr)(D\setminus D_0)$ для выпуклой возрастающей  функции  $f\colon \RR^+\to \RR^+$  с $f(0)=0$ функция 
\begin{equation}\label{loggdtR}
	 f\Bigl(\frac{s}{\log (1+NR)}\Bigr)\log(1+NR)   \quad\text{--- тестовая функция из $\sbh_0^+(D\setminus D_0)$}. 
\end{equation}
}
\end{example} 

\subsubsection{\underline{Функции расстояния и тестовые функции}}\label{distt} В тех случаях, когда тестовые функции в данном \S~6 построены с участием функций Грина или гиперболического радиуса в части или во всей тестовой функции, как правило, при некоторых ограничениях на область $D$, можно перейти от этой тестовой функци  к эквивалентной в определенном смысле функции, зависящей от расстояния до границы
\cite{Sh83}--\cite{HKZh} или её части {\large(}см. и ср. с \cite{BGK09}--\cite{FR13}{\large)}.  
\paragraph{\textrm{\bf G.}  {\texttt{C участием функции Грина}}}\label{disttG}
Отметим некоторые известные факты.
\begin{enumerate}[{1g.}]
	\item\label{gs1} {\it Если граница области $D$ состоит из конечного числа отделенных друг от друга простых замкнутых  кривых класса $C^2$ на $\CC$, 	то\/ \cite{W}, \cite[(2.8)]{Stoll}
	\begin{subequations}\label{seqq}
	\begin{align}
	 g_D(z, z_0)&\leq B_0\dist (z, \partial D), \quad z \in  D\setminus D_0, \; z_0\in D_0,	
	\tag{\ref{seqq}a}\label{{seqq}a}
		\\
	b_0\dist (z,\partial D)&\leq g_D(z, z_0), \quad z \in D\setminus  D_0,\; z_0 \in D_0	
	\tag{\ref{seqq}b}\label{{seqq}b}
\end{align}
\end{subequations}
	 для некоторого $D_0\Subset D$, где\/ $0<b_0 \leq 1\leq B_0<+\infty$ --- постоянные, зависящие только от $z_0,D_0,D$.}
\item\label{gs2} {\it Если  область $D\ni \infty=z_0$ с неполярной границей представима в виде объединения открытых кругов радиуса не меньше некоторого фиксированного строго положительного числа, то для некоторой подобласти $D_0\Subset D$ с  $z_0=\infty \in D_0$ справедлива оценка снизу \eqref{{seqq}b}} \cite[Lemma 1,  Remark 2]{FG12_unb}.

\item\label{gs3}  
{\it Оценка  сверху\/  \eqref{{seqq}a} также справедлива,   если    $D\Subset \CC$ или $D\ni \infty$, т.\,е. $\infty \notin \partial D$, --- область Ляп\-у\-н\-о\-ва--Ди\-ни,\/} т.\,е. с границей класса $C^1$, 
на которой для внутренних нормалей $\vec{n}_{\text{\tiny inner}}(z)$, $z\in \partial D$, в любых точках $z_1,z_2 \in \partial D$ 
выполнено неравенство $\bigl|\vec{n}_{\text{\tiny inner}}(z_1)-\vec{n}_{\text{\tiny inner}}(z_2)\bigr|\leq a\bigl(|z_1-z_2|\bigr)$, где непрерывная функция $a\colon \RR^+\to \RR^+$ с $a(0)=0$ возрастает, функция $t\mapsto a(t)/t$  на $(0,+\infty)$ убывает и $\int\limits_0^1\bigl(a(t)/t \bigr)\, dt<+\infty$.
 Этот результат в  \cite[Theorem 2.3]{Widman} установлен для областей в $\RR^3$, как и предшествующий такой же факт в статье М.\,В.~Келдыша и М.\,А.~Лаврентьева для частного случая областей Ляпунова \cite{KL}, но их доказательства дословно  переносятся на области в $\CC$.
\end{enumerate}
 
Эти оценки позволяют при использовании в Основной теореме, Теореме \ref{th:1} и Следствии \ref{cor:1} тестовых функций  из  Предложения \ref{prgg}, примеров  \eqref{gdgex} и Примеров \ref{ex22} и \ref{exp1gd} заменять всю или часть тестовой функции, содержащей функции Грина $g_D$ и/или $g_{\widehat{D}}$ с $\widehat{D}\supset D$  на функцию от расстояния до границы $\partial D$. Если функция-мажоранта зависит от расстояния только до части $E \subset \partial D$ границы области $D$ как в 
\cite{BGK09}--\cite{GK12}, то область  $\widehat{D}\supset D$ в тестовых функциях с участием функции Грина $g_{\widehat{D}}$ выше, по-видимому, целесообразнее всего выбирать как объединение области $D$ со всеми кругами фиксированного радиуса с центрами на дополнении $\partial D\setminus E$ или же со всеми кругами вида $D\bigl(z,r(z)\bigr)$,  где
$z$ пробегает  дополнение $\partial D\setminus E$, а $r(z)=\dist (z, E)$  (см. и.ср. с применениями в пп. \ref{MdistED}{\bf DLE} ниже). Так, из Предложения \ref{prgg} в его условиях и ограничениях при $s=g_{\widehat{D}}$ и регулярной области   $\widehat{D}\supset D$ с учётом неравенства \eqref{est11} сразу вытекают 
\begin{propos}\label{prDDdist} Если $f\colon [1,+\infty)\to \RR^+$ --- убывающая выпуклая функция с непрерывной правой производной\footnote{Это условие позволяет легко продолжить функцию $f$ как выпуклую убывающую на $\RR$.}  в точке $1$, то для 
\underline{тестовой},\/ --- {\rm по Предложению \ref{prgg},} --- функции $g_D f\bigl(g_{\widehat{D}}/g_D\bigr)$ при условиях 
{\rm \ref{gs1}g} или {\rm\ref{gs2}g--\ref{gs3}g} на обе  области  $D\subset \widehat{D}$,  обеспечивающих  выполнение двусторонних оценок вида  \eqref{seqq} для обеих областей $D$ и $\widehat{D}$, для некоторой подобласти $D_0\Subset D$ с $z_0\in D_0$ на $D\setminus D_0$
c некоторыми постоянными  $0<a_0\leq 1\leq A_0<+\infty$, зависящими  только от $z_0,D_0, D, \widehat{D}$,   справедливы оценки 
\begin{multline}\label{gdfgd}
		a_0\dist(\cdot, \partial D) \, f\left(A_0\,\frac{\dist \bigl(\cdot, \partial \widehat{D}\bigr)}{\dist (\cdot, \partial D)}\right)\leq 
		g_D\,f\Bigl(\frac{g_{\widehat{D}}}{g_D}\Bigr)\\
\leq A_0\dist(\cdot, \partial D) \, f\left(a_0\,\frac{\dist \bigl(\cdot, \partial \widehat{D}\bigr)}{\dist (\cdot, \partial D)}\right).
\end{multline}

\end{propos}
Аналогично, для других  вариантов  тестовых функций  имеем
\begin{propos}\label{prDDdistd1} Если $f\colon \RR^+\to \RR^+$ --- возрастающая выпуклая функция с $f(0)=0$, то для 
\underline{тестовой},\/ --- {\rm из Примера \ref{exgDs} с $D'=D$,} --- функции $f\circ g_D$ при условиях 
{\rm \ref{gs1}g} или {\rm\ref{gs2}g--\ref{gs3}g} на область  $D$,  обеспечивающих  выполнение двусторонних оценок вида  \eqref{seqq}, для некоторой подобласти $D_0\Subset D$ с $z_0\in D_0$ на $D\setminus D_0$
c  постоянными  $0<b_0\leq 1\leq B_0<+\infty$ из  \eqref{seqq} справедливы оценки 
\begin{equation}\label{gdfgdd1}
	f\bigl(b_0\dist(\cdot, \partial D)\bigr)\leq  f(g_D) \leq f\bigl(B_0\dist(\cdot, \partial D)\bigr).
\end{equation}
Если $f\colon \RR^+ \to \RR^+$  --- выпуклая убывающая функция  с правой производной  	$f'_{\text{\rm\tiny right}}(0)<+\infty$, то для 
\underline{тестовой},\/ --- {\rm по Предложению \ref{prgg} с $s=1$,} --- функции $g_D\,f(1/ g_D)$ на $D\setminus D_0$ справедливы оценки
\begin{multline}\label{gdfgdd1qq}
	b_0\dist(\cdot, \partial D)\, f\Bigl(\frac{1}{b_0\dist(\cdot, \partial D)}\Bigr)\leq  
	g_D\,f(1/ g_D)\\
	\leq 	B_0\dist(\cdot, \partial D)\, f\Bigl(\frac{1}{B_0\dist(\cdot, \partial D)}\Bigr).
\end{multline}
\end{propos}

\paragraph{\textrm{\bf R.}  {\texttt{C участием гиперболического радиуса}}}\label{disttR}
Отметим аналогичные оценки  и для гиперболического радиуса $R$ области $D\Subset \CC$ гиперболического типа.
\begin{enumerate}[1R.]
	\item\label{R1} {\it  Для некоторого числа $A=\const_D^+$ всегда выполнены неравенства\/} \cite[Theorem 3.8]{AvW}
				\begin{equation}\label{loges}
		\dist (z,\partial D)	\leq R(z,D)\leq A \dist(z,\partial D)\log \frac{1}{\dist (z,\partial D)} \, .
		\end{equation}
\item\label{R2} Если граница $\partial D$ равномерно совершенна (для конечносвязной области это означает, что на её границе нет изолированных точек), то в правой части \eqref{loges} можно убрать логарифмическую часть $\log \frac{1}{\dist (z,\partial D)}$ \cite[{\bf 4.2}]{Av15}.
\end{enumerate}

Приведем  один из возможных аналогов Предложения \ref{prDDdist}  для конформного радиуса $R$   \underline{выпуклой подобласти} $D\Subset \CC$. В Примере  \ref{ex:Rconv} в качестве функции $s\in \sbh^+(D)$ выберем продолженную функцию Грина $g_{D'}(\cdot , z_0)$ с полюсом в точке  $z_0\in D_0$ регулярной подобласти $D'\subset D$ с  $z_0\in D_0\subset D'$. Тогда для  выпуклой возрастающей функцией $f\colon \RR^+\to \RR^+$ с $f(0)=0$ функция $Rf(g_{D'}/R) \in \sbh_0^+(D\setminus D_0)$ тестовая и имеет место 
\begin{propos}\label{pr:rtrf} Если подобласть $D'\subset D$ вместо $D$ удовлетворяет условию из\/  {\rm\ref{gs1}g}, то c некоторым числами
 $0<a\leq 1\leq A<+\infty$ имеет место  двусторонняя оценка 
\begin{multline*}
	\dist(\cdot, \partial D)\, f\left(a\,\frac{\dist \bigl(\cdot, \CC_{\infty}\setminus D'\bigr)}{\dist (\cdot, \partial D)}\right) 
	\overset{\rm\ref{R1}R}{\leq} 	Rf\Bigl(\frac{g_{D'}}{R}\Bigr) \\
	\overset{\rm\ref{R1}R,\ref{R2}R}{\leq}  A\dist(\cdot, \partial D)\, f\left(A\,\frac{\dist \bigl(\cdot, \CC_{\infty}\setminus D'\bigr)}{\dist (\cdot, \partial D)}\right) 
	\end{multline*}
\end{propos} 
Точно такие же двусторонние  оценки через функции расстояния легко получить и для тестовых функций вида \eqref{loggdtR} из Примера \ref{exp1gdR} при $s=g_{D'}$ с $D'\subset D$.  

\subsubsection{\underline{Тестовые функции для  $\CC$ и для проколотой в точках $\CC_{\infty}$}}\label{tfCC} 
Используется классическая терминология из  \cite{GLO}--\cite{Khab091}. Мы не стремимся в этом п.  \ref{tfCC} к построению каких-либо экзотических тестовых функций, поскольку они уже широко охвачены в наших работах  \cite{Khab91}--\cite{Khab091} вплоть до функций многих переменных \cite{Khsur}, \cite{Khab99}. Поэтому приведем лишь элементарный
 \begin{example}   Рассмотрим две соответственно $\rho$- и $\gamma$-тригонометрически  выпуклые  положительные $2\pi$-периодические  функции $h$ и $k$ на $\RR$. 
Пусть $r_0>0$. 

Тогда для любого $p\geq \rho$ функция 
\begin{equation*}
H(z)=H(re^{i\theta}):=	h(\theta)r^{-p}\geq 0, \quad\text{при  $r_0\leq r<+\infty$, $\theta \in \RR$, $z=re^{i\theta}$,} 
\end{equation*}
 субгармоническая на $\CC\setminus D(r_0)$ и стремится к нулю при $r\to +\infty$, а следовательно тестовая и принадлежит классу $\sbh_0^+\bigl(\CC \setminus (D(r_0); \leq \max\limits_{\theta\in [0,2\pi)} h(\theta)r_0\bigr)$. Более тонкие примеры тестовых функций для $D=\CC$ на основе $\rho$-тригонометрически выпуклых функций построены и нашли применения в \cite{Khab91}, \cite{Khab91s}, \cite{Khsur}, \cite{Khab99} (в иной терминологии).

Далее, рассмотрим  плоскость $D:=\CC_{*}=\CC\setminus \{0\}$ для простоты лишь с двумя проколотыми точками 
 $0$ и $\infty$. Тогда для любых  $p\geq \rho$ и $l\geq \gamma$ функция
\begin{equation*}
	H(re^{i\theta}):=\begin{cases}
	h(\theta)r^{-p}, \quad\text{при  $r_0\leq r<+\infty$},\\
	k(\theta)r^l, \quad\text{при   $0<r\leq r_0/2$}	
	\end{cases}
	\end{equation*}
из тех же соображений тестовая из  класса 
$$\sbh_0^+\bigl(\CC_*\setminus A(r_0/2, r_0); \leq \max\limits_{\theta\in [0,2\pi)} \max\{ h(\theta)r_0^{-p}, 
k(\theta)(r_0/2)^{l}\}\bigr),$$
где $A(\cdot, \cdot)$ --- кольцо, как в \ref{ssCr}, \ref{Sann}.   
\end{example} 

\subsection{Операции над тестовыми функциями} 
Приведенные здесь свойства операций, замкнутых на множестве тестовых функций основаны на широко известных общих свойствах субгармонических функций \cite{Rans}--\cite{Klimek}.  Например, если  $v\in \sbh_0^+(D\setminus D_0, \leq b)$ и $b'\in \RR^+$, то  $b'v\in \sbh_0^+(D\setminus D_0, \leq b'b)$.

Пусть $\tt J$ --- множество произвольной природы, элементы которого обозначаем буквой  $\tt j$. Множество $\tt J$ будет выступать как {\it множество индексов}. Нам будет удобнее  обозначать тестовые функции $v$ с индексом $\tt j$ как пару $(v,{\tt j})$.  Кроме того, рассматриваем множество чисел $b_{\tt j}\in \RR^+$, $\tt j \in \tt J$.

\begin{propos}\label{prsum} Пусть множество $\tt J$  конечно. Для   множества функций $(v,{\tt j})\in \sbh_0^+(D\setminus D_0, \leq b_{\tt j})$, $\tt j\in \tt J$, их сумма из  $\sbh_0^+\bigl(D\setminus D_0, \leq \sum_{\tt j\in J} b_{\tt j}\bigr)$.
\end{propos}

\begin{propos}\label{prsup} Пусть $B:=\sup_{\tt j} b_{\tt J}\in \RR^+$.
Предположим, что семейство тестовых функций $(v,{\tt j})\in \sbh_0^+(D\setminus D_0, \leq b_{\tt j})$ равномерно по $\tt J$ стремится к нулю при приближении к границе $\partial D$. Тогда полунепрерывная сверху регуляризация функции $\sup_{\tt j\in J} (v,{\tt j})$ из 
$ \sbh_0^+(D\setminus \clos D_0, \leq B)$. Более того, если $\tt J$ --- компактное топологическое пространство и функция $(v,\cdot)$ полунепрерывна сверху на $(D\setminus \clos D_0)\times {\tt J}$, то регуляризация уже не нужна \cite[Theorem 2.4.7]{Rans}.
\end{propos}
\begin{propos}\label{prinf} Пусть $\tt J$ --- направленное множество, а семейство  фу\-н\-к\-ций $(v,{\tt j})\in \sbh_0^+(D\setminus D_0, \leq b_{\tt j})$ направлено вниз, или  фильтруется влево. Тогда $\inf_{\tt j\in J} (v,{\tt j})\in \sbh_0^+\bigl(D\setminus \clos D_0,  \leq \inf_{\tt j\in J} b_{\tt j}\bigr)$.
\end{propos}
\begin{propos}\label{prlimsup} Пусть ${\tt J}\subset \RR^+$ неограниченно,  $B:=\limsup\limits_{{\tt j}\to +\infty} b_{\tt j}\in \RR^+$.
Предположим, что семейство тестовых функций $(v,{\tt j})\in \sbh_0^+(D\setminus D_0, \leq b_{\tt j})$ равномерно по $\tt J$ стремится к нулю при приближении к границе $\partial D$. Тогда полунепрерывная сверху регуляризация функции $\limsup\limits_{{\tt j}\to +\infty} (v,{\tt j})$ принадлежит классу $ \sbh_0^+(D\setminus \clos D_0, \leq B)$. 
\end{propos}

\begin{propos}\label{print} Пусть   
$(v, {\tt j})$ --- тестовые функции, $({\tt J},\kappa)$ ---  пространство с мерой $\kappa ({\tt J})<+\infty$, функция $(v,\cdot)$ измерима на $(D\setminus \clos D_0)\times {\tt J}$, функция  $z\mapsto \sup_{\tt j\in J} v(z,{\tt j})$ локально ограниченно сверху на $D\setminus \clos D_0$. Тогда интеграл 
	 $\int_{\tt J} v(z,{\tt j})\, d\kappa ({\tt j})$
 --- тестовая функция \cite[Theorem 2.4.8]{Rans}.
\end{propos}
\begin{propos}\label{prD} Пусть $v\in \sbh_0^+(D\setminus D_0,\leq b)$ и в $D\setminus D_0$ задана система непересекающихся регулярных подобластей $D_{\tt j}$, $\tt j\in J$. Предположим, что можно выбрать  исчерпание $D_n\supset D_0$ области $D$ так, что граница $\partial D_n$ не пересекается ни с одним $D_{\tt j}$ при  больших $n$.  Если  гармонически 
продолжить функцию $v$ внутрь каждой подобласти $D_{\tt j}$, то  полученная преобразованная функция по-прежнему  из класса 
$\sbh_0^+(D\setminus D_0,\leq b)$ \cite[Теорема 2.18]{HK}.

\end{propos}

\begin{propos}\label{prhol} Рассмотрим, наряду $D_0\Subset D$,  еще одну  пару областей  $\varnothing \neq D_0'\Subset D'\neq \CC_{\infty}$. Пусть функция  $h\in \Hol (D\setminus D_0)$ с образом $h(D\setminus D_0)\subset D'\setminus D_0'$ 
удовлетворяет условию: 
для любой подобласти $D'_1\Subset D'$ с $D_0'\subset D'_1$  найдётся подобласть $D_1\Subset D$ c $D_0\subset D$, для которой $h(D\setminus D_1)\subset D'\setminus D_1'$. Тогда для любой тестовой функции $v\in \sbh_0^+(D'\setminus D_0'; \leq b)$ функция $v\circ h$ тестовая на $D\setminus D_0$ из класса  $\sbh_0^+(D\setminus D_0; \leq b)$ \cite[Theorem 2.7.4]{Rans}.
\end{propos}
\section{Основные результаты для нерадиальных мажорант}\label{Snonrad}

\subsection{Операторы Лапласа и набла для суперпозиций и произведений}\label{OLsf}
Этот во многом вычислительный подраздел, наряду с тем, что дает правила вычисления мер Рисса субгармонических функций, полученных суперпозицией и умножением на функцию, как в Теореме G--K, содержит в себе и новое доказательство Теоремы G--K.
Далее для функций $f,g$ класса $C^2$ исходим из элементарной формулы векторного анализа
\begin{equation}\label{mfor}
	\Delta (gf)=f\Delta g+ 2 \nabla g\cdot \nabla f +g\Delta f,
\end{equation}
  где $\nabla g\cdot \nabla f$ --- скалярное произведение градиентов, и её частного случая с $g=1$ 
				\begin{equation}\label{{seq}s}
				|\nabla f|^2=\frac12 \Bigl(\Delta (f^2)-2f\Delta f\Bigr).
		\end{equation}

\begin{theorem}\label{prLn}
Пусть $g$ и $s$ функции класса $C^2$ на некотором открытом множестве $\mathcal O$ в $\CC$, $0\notin g(\mathcal O)$ и 
$f$ --- функция класса $C^2$ на $(s/g)(\mathcal O)$, $f(s/g):=f\circ\dfrac{s}{g}$ --- суперпозиция. 
Справедлива формула
\begin{equation}\label{trf}
	\Delta \Bigl(gf\Bigl(\frac{s}{g}\Bigr)\Bigr)=f'\Bigl(\frac{s}{g}\Bigr)\Delta s -\left(f'\Bigl(\frac{s}{g}\Bigr) \frac{s}{g} -f\Bigl(\frac{s}{g}\Bigr) \right)\Delta g
	+gf''\Bigl(\frac{s}{g}\Bigr) \Bigl|\nabla \frac{s}{g}\Bigr|^2,
\end{equation}
где последний сомножитель в обозначении 
$\nabla g\cdot \nabla s$ для  скалярного произведения градиентов записывается как <<полный квадрат разности>>
\begin{subequations}\label{66g}
\begin{align}
	\Bigl|\nabla \frac{s}{g}\Bigr|^2&=\frac{g^2|\nabla s|^2-2gs\, \nabla g\cdot \nabla s+ s^2|\nabla g|^2}{g^4}
	\tag{\ref{66g}$\nabla$}\label{66gn}
	\\
&\overset{\eqref{mfor}, \eqref{{seq}s}}{=}\frac{1}{2g^4}
\Bigl(g^2\Delta(s^2)-2gs\Delta(gs)+s^2\Delta (g^2) \Bigr)	
\tag{\ref{66g}$\Delta$}\label{66gD}
\end{align}
\end{subequations}

В частности, при $g=1$ --- тождественная единица, имеем
\begin{equation}\label{g11}
	\Delta  (f\circ s)=f'(s)\Delta s +f''(s) |\nabla s|^2
	=\Bigl(f'(s)-sf''(s)\Bigr) \Delta s +\frac{1}{2} f''(s)\Delta (s^2).
\end{equation}
а при $s=1$
\begin{equation}\label{s11}
	\Delta \Bigl(gf\Bigl(\frac1{g}\Bigr)\Bigr)=
	\left (f\Bigl(\frac1{g}\Bigr) -
	\frac{1}{g}\,f'\Bigl(\frac1{g}\Bigr) -\frac{1}{g^2}\,f''\Bigl(\frac{1}{g}\Bigr)\right) \Delta g+
	\frac{1}{2g^3}\,f''\Bigl(\frac{1}{g}\Bigr)\Delta(g^2).
\end{equation}
\end{theorem}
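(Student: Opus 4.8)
The plan is to verify the master identity \eqref{trf} by a single direct computation that combines the Laplacian product rule \eqref{mfor} with the chain rule, and then to obtain the auxiliary identities \eqref{66gn}--\eqref{66gD} and the special cases \eqref{g11}, \eqref{s11} by specialization. Throughout I write $u:=s/g$, which is a $C^2$ function on $\mathcal O$ because $0\notin g(\mathcal O)$, so that $f(s/g)=f\circ u$ is $C^2$ as well. First I would apply \eqref{mfor} to the product $g\cdot f(u)$, giving $\Delta\bigl(g\,f(u)\bigr)=f(u)\,\Delta g+2\,\nabla g\cdot\nabla\bigl(f(u)\bigr)+g\,\Delta\bigl(f(u)\bigr)$. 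The chain rule supplies $\nabla\bigl(f(u)\bigr)=f'(u)\,\nabla u$ and $\Delta\bigl(f(u)\bigr)=f'(u)\,\Delta u+f''(u)\,\bigl|\nabla u\bigr|^2$. Substituting these, the $f''$ contribution is already $g\,f''(u)\bigl|\nabla u\bigr|^2$, matching the last term of \eqref{trf}, so it remains to reorganize the $f(u)$ and $f'(u)$ parts: the former is $f(u)\,\Delta g$, while the coefficient of $f'(u)$ is $2\,\nabla g\cdot\nabla u+g\,\Delta u$.

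The crux is the scalar identity $2\,\nabla g\cdot\nabla u+g\,\Delta u=\Delta s-u\,\Delta g$. To establish it I would compute the quotient derivatives $\nabla u=\dfrac{g\,\nabla s-s\,\nabla g}{g^2}$ and, applying \eqref{mfor} to $s\cdot g^{-1}$ together with $\nabla(g^{-1})=-g^{-2}\nabla g$ and $\Delta(g^{-1})=-g^{-2}\Delta g+2g^{-3}\bigl|\nabla g\bigr|^2$, the quotient Laplacian $\Delta u=g^{-1}\Delta s-2g^{-2}\,\nabla s\cdot\nabla g-s\,g^{-2}\Delta g+2s\,g^{-3}\bigl|\nabla g\bigr|^2$. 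Plugging both into $2\,\nabla g\cdot\nabla u+g\,\Delta u$, the two $g^{-1}\,\nabla g\cdot\nabla s$ terms cancel and the two $s\,g^{-2}\bigl|\nabla g\bigr|^2$ terms cancel, leaving exactly $\Delta s-(s/g)\,\Delta g$. This reduces the coefficient of $f'(u)$ to $\Delta s-u\,\Delta g$, and collecting the three contributions gives $\Delta\bigl(g\,f(u)\bigr)=f'(u)\,\Delta s-\bigl(f'(u)\,u-f(u)\bigr)\Delta g+g\,f''(u)\bigl|\nabla u\bigr|^2$, which is precisely \eqref{trf}.

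For the auxiliary formula, \eqref{66gn} is immediate from $\bigl|\nabla u\bigr|^2=\bigl|g\,\nabla s-s\,\nabla g\bigr|^2/g^4$ expanded termwise. To pass to \eqref{66gD} I would eliminate the three bilinear expressions in the numerator in favor of Laplacians: \eqref{{seq}s} gives $\bigl|\nabla s\bigr|^2=\tfrac12\bigl(\Delta(s^2)-2s\,\Delta s\bigr)$ and $\bigl|\nabla g\bigr|^2=\tfrac12\bigl(\Delta(g^2)-2g\,\Delta g\bigr)$, while \eqref{mfor} applied to $g\,s$ yields $\nabla g\cdot\nabla s=\tfrac12\bigl(\Delta(gs)-s\,\Delta g-g\,\Delta s\bigr)$. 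Substituting into $g^2\bigl|\nabla s\bigr|^2-2gs\,\nabla g\cdot\nabla s+s^2\bigl|\nabla g\bigr|^2$, the $g^2 s\,\Delta s$ terms cancel in pairs and the $g s^2\,\Delta g$ terms cancel in pairs, leaving $\tfrac12\bigl(g^2\Delta(s^2)-2gs\,\Delta(gs)+s^2\Delta(g^2)\bigr)$; dividing by $g^4$ produces \eqref{66gD}.

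Finally, the two special cases follow by specialization of \eqref{trf}. Setting $g\equiv1$ (so $\Delta g=0$ and $u=s$) gives $\Delta(f\circ s)=f'(s)\,\Delta s+f''(s)\bigl|\nabla s\bigr|^2$, and replacing $\bigl|\nabla s\bigr|^2$ through \eqref{{seq}s} rewrites this as $\bigl(f'(s)-s\,f''(s)\bigr)\Delta s+\tfrac12 f''(s)\,\Delta(s^2)$, which is \eqref{g11}. Setting $s\equiv1$ (so $\Delta s=0$ and $u=1/g$) gives the first form of \eqref{s11}; then $\bigl|\nabla(1/g)\bigr|^2=g^{-4}\bigl|\nabla g\bigr|^2$ combined with \eqref{{seq}s} converts the $f''$ term into the stated combination of $\Delta g$ and $\Delta(g^2)$, yielding \eqref{s11}. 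The entire argument is elementary calculus; the only genuine difficulty is bookkeeping, namely keeping the many cross terms straight so that everything except the advertised terms cancels in the pivotal identity $2\,\nabla g\cdot\nabla u+g\,\Delta u=\Delta s-u\,\Delta g$ and in the reduction of \eqref{66gn} to \eqref{66gD}.
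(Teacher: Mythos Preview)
Your proof is correct and follows essentially the same route as the paper's. The only minor tactical difference is that the paper obtains the pivotal identity $2\,\nabla g\cdot\nabla u+g\,\Delta u=\Delta s-u\,\Delta g$ in one stroke by applying the product rule \eqref{mfor} to $g\cdot u=s$, whereas you verify it by explicit expansion of $\nabla u$ and $\Delta u$; everything else---the opening use of \eqref{mfor}, the chain rule for $f(u)$, the derivation of \eqref{66gn}--\eqref{66gD}, and the two specializations---matches the paper.
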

\begin{proof}  Теперь действуем отдельно по каждому слагаемому в правой части \eqref{mfor} 	(ниже в доказательстве роль аргумента в функции $f$ и в производных $f$ всегда играет  функция $s/g$).
			\begin{equation*}
	g\Delta f=g\nabla^2 f	=g \nabla \cdot \Bigl(f' 
		\nabla \frac{s}{g}\Bigr) 	=g\left( f'' 	\Bigl|\nabla \frac{s}{g}\Bigr|^2
	+f' 	\nabla^2\frac{s}{g}\right),
	\quad  2\nabla g\cdot \nabla f=2f'\nabla g\cdot\nabla \frac{s}{g}\,.		
	\end{equation*}
Подставляя в \eqref{mfor}, получаем
\begin{multline}\label{dlff}
	\Delta\bigl(gf(s/g)\bigr)=f\Delta g
	+2f'\nabla g\cdot\nabla \frac{s}{g}
	+g\left( f''
	\Bigl|\nabla \frac{s}{g}\Bigr|^2
	+f'
	\nabla^2\frac{s}{g}\right)\\
	=f\Delta g
	+f' \Bigl(g\Delta \frac{s}{g}+2 \nabla g\cdot \nabla \frac{s}{g} \Bigr)
	+g f'' 	\Bigl|\nabla \frac{s}{g}\Bigr|^2.
\end{multline}
Здесь множитель при $f'$ можно представить в в иде 
\begin{multline*}
	g\Delta \frac{s}{g}+2 \nabla g\cdot \nabla \frac{s}{g}
	=\Bigl(g\Delta \frac{s}{g}+2 \nabla g\cdot \nabla \frac{s}{g}+
	\frac{s}{g} \Delta g \Bigr)-\frac{s}{g} \Delta g\\
	=\Bigl|\text{ снова формула \eqref{mfor} }\Bigr|
	=\Delta \Bigl(g\,\frac{s}{g}\Bigr)-\frac{s}{g} \Delta g
	=\Delta s-\frac{s}{g} \Delta g.
		\end{multline*}
Таким образом, меняется множитель при $f'$ в  \eqref{dlff}, а именно:
\begin{multline*}
	\Delta\bigl(gf(s/g)\bigr)=f\Delta g
	+2f'\nabla g\cdot\nabla \frac{s}{g}
	+g\left( f'' 	\Bigl|\nabla \frac{s}{g}\Bigr|^2
	+f' 	\nabla^2\frac{s}{g}\right)\\
	=f\Delta g
	+f' \Bigl(\Delta s-\frac{s}{g} \Delta g \Bigr)
	+g f'' 	\Bigl|\nabla \frac{s}{g}\Bigr|^2
	=f' \Delta s +\Bigl(f-f'\frac{s}{g} \Bigr)\Delta g
		+g f''
	\Bigl|\nabla \frac{s}{g}\Bigr|^2,
	\end{multline*}
что совпадает с требуемой формулой \eqref{trf}.

Формула \eqref{66gn} --- простая выкладка из  векторного анализа:
\begin{equation*}
\Bigl|\nabla \frac{s}{g}\Bigr|^2= \nabla \frac{s}{g}\cdot\nabla \frac{s}{g}=\left(\frac{g\nabla s-s\nabla g}{g^2}\right)^2
=
\frac{g^2|\nabla s|^2-2gs\, \nabla g\cdot \nabla s+ s^2|\nabla g|^2}{g^4}	\,.
\end{equation*}
 Переход от   \eqref{66gn} к \eqref{66gD} осуществляется за счёт формул \eqref{mfor}--\eqref{{seq}s}, записанной в трёх различных  видах 
\begin{subequations}\label{seq}
\begin{align}
|\nabla s|^2&=\frac12 \Bigl(\Delta (s^2)-2s\Delta s\Bigr)
\tag{\ref{seq}s}\label{{seq}s1}\\
2\nabla g\cdot \nabla s &=\Delta(gs)-g\Delta s-s\Delta g.
\tag{\ref{seq}gs}\label{{seq}gs}\\
|\nabla g|^2&=\frac12 \Bigl(\Delta (g^2)-2g\Delta g\Bigr)
\tag{\ref{seq}g}\label{{seq}g}
\end{align}
\end{subequations}
Подстановка выражений \eqref{seq} в  \eqref{66gn} после приведения подобных и даёт \eqref{66gD}. Оставшиеся формулы \eqref{g11} и \eqref{s11} --- частные случаи \eqref{trf}--\eqref{66g}.
\end{proof}
\begin{proof}[теоремы G--K {\rm (с явной мерой Рисса)}]\label{rr} В случае \eqref{ih}  в правой части  \eqref{trf} остаётся только последнее слагаемое в \eqref{trf}, а оно, очевидно, положительно. Другими словами 
\begin{enumerate}[(i)]
	\item в  {\it случае \eqref{ih} плотность меры Рисса субгармонической функции $gf(s/g)$ задаётся с учётом \eqref{66gD} выражением
		\begin{equation}\label{mi}
		\frac{1}{4\pi g^3}
\Bigl(g^2\Delta(s^2)-2gs\Delta(gs)+s^2\Delta (g^2) \Bigr)	f''(s/g).
	\end{equation}
	}
\end{enumerate}
В случае   \eqref{iih} там же остаются только первое и последнее слагаемое, которые положительны. 
Другими словами 
\begin{enumerate}[(ii)]
	\item в  {\it случае \eqref{iih} плотность меры Рисса субгармонической функции $gf(s/g)$ задаётся с учётом \eqref{66gD} выражением
		\begin{equation}\label{mii}
	f'\Bigl(\frac{s}{g}\Bigr) \,d\nu_s  +	\frac{1}{4\pi g^3}
\Bigl(g^2\Delta(s^2)-2gs\Delta(gs)+s^2\Delta (g^2) \Bigr)	f''(s/g),
	\end{equation}
	где $\nu_s$ --- мера Рисса субгармонической функции $s$.}
\end{enumerate}

В случае \eqref{iiih} вопрос только в положительности промежуточного слагаемого, а это следует из условия $\Delta g\leq 0$ и очевидного свойства $f'(x)x\geq f(x)$ для выпуклых функций   $f\colon \RR^+\to \RR^+$ с $f(0)=0$. При этом
\begin{enumerate}
	\item[{(iii)}] {\it случае \eqref{iih} плотность меры Рисса субгармонической функции $gf(s/g)$ задаётся с учётом \eqref{66gD} выражением
		\begin{multline}\label{miii}
	f'\Bigl(\frac{s}{g}\Bigr) \, d\nu_s +\left(f'\Bigl(\frac{s}{g}\Bigr) \frac{s}{g} -f\Bigl(\frac{s}{g}\Bigr) \right) \, d\nu_{-g}
	\\+\frac{1}{4\pi g^3}
\Bigl(g^2\Delta(s^2)-2gs\Delta(gs)+s^2\Delta (g^2) \Bigr)	f''(s/g),
\end{multline}
где $\nu_s$ и $\nu_{-g}$--- меры Рисса субгармонических функции $s$ и $-g$. }
\end{enumerate}

Для перехода к общему случаю (не класса $C^2$) пользуемся отработанной техникой аппроксимации суб- и супергармонических и выпуклых функций монотонными  последовательностями функций класса $C^2$. То, что в определениях  плотностей мер Рисса в \eqref{mi}--\eqref{miii} участвуют именно операторы Лапласа, т.е. нет градиентов, позволяет утверждать, что \eqref{mi}--\eqref{miii} в этом общем  случае можно рассматривать, как плотности мер Рисса, определённые в смысле теории распределений, т.е. обобщенных функций.    
\end{proof}
\begin{remark}\label{rrfL} Выражения \eqref{mi}--\eqref{miii} определяют и явный вид мер Рисса  тестовых функций из всех Предложений и Примеров подраздела \ref{sstf}.
\end{remark}
\begin{remark} Нам известны  два варианта многомерных обобщений Теоремы \ref{prLn}. Первый из них, практически идентичный, даёт подобные формулы для оператора Лапласа от подобных же суперпозиций в $\RR^k$. Второй, более деликатный, связан с заменой оператора Лапласа на форму Леви в $\CC^n$ {\large(}см. и ср. с \cite[Theorem 2.9.19]{Klimek}{\large)}. Мы намерены вернуться  к этому в ином месте.  
\end{remark}

\subsection{Субгармонические мажоранты-суперпозиции}

В рамках Основной Теоремы, Теоремы \ref{th:1} и Следствия \ref{cor:1} имеет значение лишь вид меры Рисса субгармонической функции-мажоранты $M\in \sbh(D)$  вблизи границы $\partial D$, т.\,е. при  рамочном соглашении (D) из  п.~\ref{ns} Введения в окрестности $D\setminus D_0$, в $D\setminus \clos D_0$ или в $D\setminus \{z_0\}$ при некотором $z_0\in D_0$. Сначала рассмотрим более простой случай суперпозиции выпуклой и субгармонической функции.
\begin{theorem}\label{th:fs}   Пусть  мажоранта имеет вид $M=F\circ s$ --- суперпозиция выпуклой возрастающей функции $F\colon \RR\to \RR$ 
класса $C^2$ с $F(-\infty):=\lim\limits_{x\to -\infty}F(x)$
и   $s\in \sbh(D)\setminus \{\boldsymbol{-\infty}\}$, т.\,е. по Предложению\/ {\rm C}\eqref{iihF} с $g=1$ имеем $F\circ s \in  \sbh(D)$.  Введем обозначения  $\nu_s$ и $\nu_{s^2}$ для мер Рисса функций $s$ и  $s^2\in \sbh(D)$. Пусть  $z_0\in D_0$, $u\in \sbh (D)$ с 
$u(z_0)\neq -\infty$ с мерой Рисса $\nu_u$, $u\leq F\circ s$ на $D\setminus D_0$; $b\in \RR^+$. Тогда найдутся постоянные $C, \overline{C}_M\in \RR^+$ вида \eqref{cz0C+}, с которыми  для любой тестовой функции $v\in \sbh_0^+(D\setminus D_0;\leq b)$   
\begin{multline}\label{mris}
	\int_{D\setminus D_0} v \, d\nu_u \leq \int_{D\setminus D_0}   \bigl(F'(s)-sF''(s)\bigr) v\, d\nu_s\\
	+\frac{1}{2}\int_{D\setminus D_0}v F''(s) \, d \nu_{s^2}+ 	C\overline{C}_M -Cu(z_0).
\end{multline}
В частности, если сумма интегралов в правой части \eqref{mris} конечна\/ {\rm  ($<+\infty$),} то конечен  и интеграл в левой части. В случае  $u=\log |h|$ с ненулевой функцией $h\in \Hol (D)$, 	обращающейся в нуль на последовательности 	${\tt Z}=\{{\tt z}_k\}_{k\in \NN}\subset D$, левая часть \eqref{mris}  будет выглядеть как сумма из левой части \eqref{in:fz0}. 
\end{theorem}   
\begin{proof} Выполнены условия Теоремы \ref{th:1}. Соотношение \eqref{mris} --- в точности 
\eqref{mest+}, поскольку в правой  части  берётся интеграл от $v$ по мере Рисса 
\begin{equation}\label{meraRissa}
	\nu_M=\nu_{F\circ s}=\bigl(F'(s)-sF''(s)\bigr)  \nu_s +\frac{1}{2} F''(s)  \nu_{s^2}
\end{equation}
 --- частный случай \eqref{mii}  п.~(ii) с $g=1$ и $F$ вместо $f$ нашего доказательства теоремы G--K (с явной мерой Рисса) из подраздела \ref{OLsf}.
\end{proof}
\begin{addition}[{\rm к Теореме \ref{th:fs}}]\label{add:gn} {\rm Мера Рисса из \eqref{mris} может определятся и иначе.  Из первого тождества в \eqref{g11} с $F$ вместо $f$,
	переписанного в виде
	\begin{equation*}
		\Delta  (F\circ s)=F'(s)\Delta s +F''(s) |\nabla s|^2  = F'(s) \Delta s+\nabla F'(s) \cdot \nabla s
			\end{equation*}
по первой формуле Грина для подобластей $U\Subset D\setminus \clos D_0$ с границей класса $C^1$ и функцией  $s\in \sbh(D)$ с сужением на $D\setminus \clos D_0$ класса $C^2$ имеем
  \begin{equation}\label{f:byUn}
\nu_{F\circ s}	(U) =\frac{1}{2\pi}\int \bigl (	F'(s) \Delta s+\nabla F'(s) \cdot \nabla s \bigr)\, d \lambda =
\frac{1}{2\pi}\int_{\partial U} F'(s) \frac{\partial s}{\partial \vec{n}_{\rm\tiny out}}	\,d\sigma,
		\end{equation}
		где $\lambda$ --- мера Лебега, $\sigma$ --- мера длины границы $\partial U$, $\frac{\partial}{\partial \vec{n}_{\text{\tiny out}}}$ --- оператор дифференцирования по внешней нормали.} 
\end{addition}
\subsubsection{\underline{Мажоранта-суперпозиция с функцией Грина}}\label{mspfG}
 Пусть $F\colon (-\infty,0)\to \RR$ --- выпуклая возрастающая функция,  доопределенная, как в  Теореме C\eqref{iihF}, равенством  $F(-\infty):=\lim\limits_{x\to -\infty} F(x)$. 
Пусть $z_0 \in D_0\Subset  D\subset \widehat{D}$ и $\widehat{D}$ --- область с неполярной границей. Возможно  $\widehat{D}=D$.	Тогда  $s(z):= -g_{\widehat{D}}(z, z_0)$, где $z\in D$ и $g_{\widehat{D}}(z_0, z_0):=+\infty$, ---{\it  субгармоническая по $z$ функция в области\/ $D$} \cite{Rans}, \cite{HK}.

 Субгармоническое по Теореме C\eqref{iihF} сужение на $D$ функции-суперпозиции  $M:=F\circ \bigl(-g_{\widehat{D}}(\cdot, z_0)\bigr)\in \sbh \bigl(\widehat{D}\bigr)$ 
и обсуждается  в данном   п.~\ref{mspfG}.   Поскольку сужение 
	$g_{\widehat{D}}:=g_{\widehat{D}}(\cdot, z_0)$ на $D\setminus \{z_0\}$ --- гармоническая функция, то в данном случае с функцией  $F$ класса $C^2$ мера Рисса функции $F\circ (-g_{\widehat{D}})$ задаётся как 
		\begin{equation*}
\nu_{F\circ (-g_{\widehat{D}}) }	\overset{\eqref{g11}}{=}
\frac{1}{4\pi} F''(-g_{\widehat{D}})\Delta{\bigl((g_{\widehat{D}})^2\bigr)} \lambda	\overset{\eqref{g11}}{=}\frac{1}{2\pi} F''(-g_{\widehat{D}}) |\nabla g_{\widehat{D}}|^2\lambda  
	\end{equation*}
	на $D\setminus \{z_0\}$,  где $(g_{\widehat{D}})^2\in \sbh^+\bigl(D\setminus \{z_0\}\bigr)$, $\lambda$ --- мера Лебега. Иная форма из Дополнения \ref{add:gn} к Теореме \ref{th:fs} в обозначениях после \eqref{f:byUn} --- 
	\begin{equation}\label{f:byUng}
\nu_{F\circ (-g_{\widehat{D}})}	(U) \overset{\eqref{f:byUn}}{=}
\frac{1}{2\pi}\int_{\partial U} F'(-g_{\widehat{D}}) \frac{\partial (-g_{\widehat{D}})}{\partial \vec{n}}	\,d\sigma, \quad U\Subset D. 
		\end{equation}
		В частности, если $t_0\in (0,+\infty)$, $0<t<t_0$, и рассматриваем только {\it относительно компактные регулярные подобласти  
		\begin{equation}\label{UtC2}
			U_t:=\{z\in D\colon g_{\widehat{D}} (z,z_0)>t\}\Subset D \quad \text{ в $D$ с границей класса $C^1$}
		\end{equation}
		 и в точности с линиями  уровня  $\{z\in D\colon g_{\widehat{D}} (z,z_0)=t\}$, совпадающими с границей $\partial U_t$,\/} то  
		функция $g_{\widehat{D}} (\cdot ,z_0) -t$, рассматриваемая только на $U_t$, обладает всеми свойствами, полностью характеризующими  функции Грина области $U_t$ с полюсом $z_0$ \cite[Теорема 5.27]{HK}, т.\,е. для выбранного  числа $t>0$
\begin{equation}\label{gUt0}
	g_{\widehat{D}} (\cdot ,z_0) -t\Bigm|_{U_t}=g_{U_t} \quad \text{\it --- функция Грина для  $U_t$ с полюсом $z_0$}. 
\end{equation}
				Следовательно, 		\eqref{f:byUng} при таких $0<t_2<t_1<t_0$		для подобласти-разности $U_{t_2<t_1}:=U_{t_2}\setminus \clos U_{t_1}$ можно переписать в виде
		\begin{multline}\label{f:byUng+}
\nu_{F\circ (-g_{\widehat{D}})}	(U_{t_2<t_1}) \overset{\eqref{f:byUng}}{=}
\frac{1}{2\pi}  \int_{\partial U_{t_2<t_1}}F'(-g_{\widehat{D}})\frac{\partial (-g_{\widehat{D}})}{\partial \vec{n}}	\,d\sigma\\
= F'(-t_2) \frac{1}{2\pi}\int_{\partial U_{t_2}}\frac{\partial (-g_{\widehat{D}}+t_2)}{\partial \vec{n}_{\text{\tiny out for $U_{t_2}$}}}	\,d\sigma
+F'(-t_1)\frac{1}{2\pi}  \int_{\partial U_{t_1}}\frac{\partial (-g_{\widehat{D}}+t_1)}{\partial \vec{n}_{\text{\tiny inner for $U_{t_1}$}}}	\, d\sigma \\
\overset{\eqref{gUt0}}{=}F'(-t_2) \frac{1}{2\pi}\int_{\partial U_{t_2}}\frac{\partial (-g_{U_{t_2}})}{\partial \vec{n}_{\text{\tiny out for $U_{t_2}$}}}	\,d\sigma
+F'(-t_1)\frac{1}{2\pi}  \int_{\partial U_{t_1}}\frac{\partial (-g_{U_{t_1}})}{\partial \vec{n}_{\text{\tiny inner for $U_{t_1}$}}} . 
		\end{multline}
		Поскольку в данных условиях гладкости границ $\partial U_{t_1}$ и $\partial U_{t_2}$ 
		\begin{equation*}
		\frac{1}{2\pi}\frac{\partial (-g_{U_{t_2}})}{\partial \vec{n}_{\text{\tiny out for $U_{t_2}$}}}, \quad
			\frac{1}{2\pi} \frac{\partial g_{U_{t_1}}}{\partial \vec{n}_{\text{\tiny inner for $U_{t_1}$}}}
		\end{equation*}
		--- это ядра Пуассона (с центром $z_0$) или плотности гармонических мер в точке $z_0$ соответственно   для областей $U_{t_2}$ и $U_{t_1}$ на  границе $\partial U_{t_2}$ 		и на границе $\partial U_{t_1}$, то, домноженные на $1/2\pi$, первый из интегралов в правой части  \eqref{f:byUng+}  равен $1$, а второй равен $-1$. Таким образом, из \eqref{f:byUng+} следует
		
		\begin{equation*}
			\nu_{F\circ (-g_{\widehat{D}})}	(U_{t_2<t_1})=F'(-t_2)-F'(-t_1), \quad -t_0 <-t_1<-t_2 <0,			
		\end{equation*}
		или, в терминах плотности меры Рисса на линиях уровня функции Грина $g_{\widehat{D}}$
				\begin{equation}\label{F2dev}
			\frac{d}{dt}\nu_{F\circ (-g_{\widehat{D}})}	(U_{t})=F''(-t), \quad -t_0 <-t <0.			
		\end{equation}
		
		Рассмотрим раздельно некоторые специальные случаи.
					\paragraph{\textrm{\bf G.}  {\texttt{Случай $\widehat{D}=D$ и суперпозиции с  $-g_D$}}} Во  избежание технических ос\-л\-о\-ж\-н\-ений  считаем, что вместе с  $\widehat{D}=D$ --- регулярная область, начиная с некоторого $t_0>0$, все области $U_t$ из \eqref{UtC2} при $0<t\leq t_0$ такие же, как описано при  \eqref{UtC2}. Для широкого класса конечносвязных областей реализации этого подробно описаны
						в \cite[Гл.~V, \S~1]{St}. Функция $F$ класса $C^2$ та же, что и в начале п.~\ref{mspfG};   $g_D:=g_D(\cdot, z_0)$. В рамках  соглашений данного  пп.~{\bf G} имеет место 
\begin{theorem}\label{th:fsg} Пусть мажоранта  $M=F\circ (-g_D)$, --- очевидно, субгармоническая на $D$, --- и функция $u\in \sbh (D)$ с $u(z_0)\neq -\infty$ и мерой Рисса $\nu_u$ удовлетворяют условию $u\leq M$ на $D\setminus U_{t_0}$, $z_0\in U_{t_0}$, $b\in \RR^+$. Тогда  найдутся постоянные $C,\overline{C}_M\in \RR^+$ вида \eqref{cz0C+} с  $D_0:=U_{t_0}$, с которыми  
\begin{enumerate}[{\rm (a)}]
	\item\label{gFt0a} для любой выпуклой возрастающей функции   $f\colon \RR^+ \to \RR^+$ с $f(0)=0$ и $f(t_0)\leq b$ выполнено неравенство
\begin{equation}\label{itotsfF}
	\int_{D\setminus U_{t_0}} (f\circ g_D) \, d \nu_u\leq \int_0^{t_0}f(t)F''(-t) \, dt+C\overline{C}_M-Cu(z_0);
\end{equation}
	\item\label{gFt0b} для любой выпуклой убывающей функции   $f\colon \RR^+ \to \RR^+$   с правой производной 
	$f'_{\text{\rm\tiny right}}(0)<+\infty$ и с $f(0)\leq b/t_0$  выполнено неравенство
\begin{equation}\label{itotsfFg}
	\int_{D\setminus U_{t_0}} g_D\,f\bigl({1}/{g_D}\bigr) \, d \nu_u\leq \int_0^{t_0} tf(1/t)F''(-t) \, dt+C\overline{C}_M-Cu(z_0).
\end{equation}
\end{enumerate}
	В частности, если  интеграл в правой части \eqref{itotsfF} или \eqref{itotsfFg} конечен, то конечен  и интеграл в левой части соответственно \eqref{itotsfF} или \eqref{itotsfFg}. Для $u=\log |h|$ с ненулевой функцией $h\in \Hol (D)$, 	обращающейся в нуль на 	${\tt Z}=\{{\tt z}_k\}_{k\in \NN}\subset D$, левая часть \eqref{itotsfF} или  \eqref{itotsfFg} будет соответственно выглядеть как сумма
	$\sum\limits_{{\tt z}_k\in D\setminus U_{t_0}} f\bigl(g_D({\tt z}_k, z_0)\bigr)$ или 
	$\sum\limits_{{\tt z}_k\in D\setminus U_{t_0}} g_D({\tt z}_k, z_0) \,f\bigl(1/g_D({\tt z}_k, z_0)\bigr)$.	
	\end{theorem}				
\begin{proof} \textit{Случай}\/ \eqref{gFt0a}. Функция $f\circ g_D$ из  класса $\sbh(D\setminus U_{t_0}; \leq b)$ по Предложению \ref{prggscr}\eqref{hb} при выборе $g:=1$ и $s:=g_D$ и условию $f(t_0)\leq b$, т.\,е., в частности, тестовая функция 
из Предложения \ref{prggscr} п.~\ref{tfiii}.
Тогда по Теореме  \ref{th:1} неравенство  \eqref{mest+} переписывается в требуемом виде 
	\begin{multline*}
		\int_{D\setminus U_{t_0}} (f\circ g_D) \, d\nu_u \leq 
		\int_{D\setminus U_{t_0}} (f\circ g_D) \, d\nu_{M}+ 	C\overline{C}_M -Cu(z_0)\\
		=\int_{D\setminus U_{t_0}} (f\circ g_D) \, d\nu_{F\circ(-g_D)}+ 	C\overline{C}_M -Cu(z_0)
		= \int_0^{t_0} f(t) \, d \nu_{F\circ (-g_{D})}	(U_{t})+ 	C\overline{C}_M -Cu(z_0)\\
				\overset{\eqref{F2dev}}{=} 
			\int_0^{t_0} f(t) F''(-t)\, d t+ 	C\overline{C}_M -Cu(z_0).
		\end{multline*}
\textit{Случай}\/ \eqref{gFt0b}. Ввиду условия $f'_{\text{\rm\tiny right}}(0)<+\infty$  функция $f$ может быть продолжена на $\RR$ как аффинная на
 $(-\infty,0]$ и выпуклая убывающая на $\RR$. По Предложению   \ref{prgg} с $s:=1$ и $g:=g_D$ функция $g_D\,f(1/ g_D)$ тестовая из  класса 
$\sbh_0^+(D\setminus U_{t_0};\leq b)$. Дальнейшие рассуждения и выкладки такие же,  как и в Случае  \eqref{gFt0a}.

\end{proof}

\begin{remark}\label{obrF}  Для всякой функции $M\colon D\to \RR$, постоянной на каждой линии уровня функции Грина $g_D(\cdot, z_0)$ с полюсом  $z_0\in D$ и непрерывной в точке $z_0\in D$, очевидно, можно однозначно построить некоторую функцию $F\colon (-\infty, 0)\to \RR$, для которой существует предел   $F(-\infty) :=\lim\limits_{x\to -\infty} F(x)$ и $M:=F\circ (-g_D)$ на $D$. Допустим, что $F$ --- возрастающая в правой окрестности точки $-\infty$ функция класса $C^2$ на $(-\infty,0)$. Из равенства \eqref{g11}, записанного  с $f=F$ и $s=-g_D$ в виде
\begin{equation}\label{g11+}
	\frac{1}{2\pi}\Delta  \bigl(F\circ (-g_D)\bigr)=	\frac{1}{2\pi}F''(-g_D) |\nabla g_D|^2=\frac{1}{4\pi} F''(-g_D)\Delta \bigl((g_D)^2\bigr)
\end{equation}
легко понять,  что $M=F\circ (-g_D)\in \sbh (D)$  тогда и только тогда, когда 
$F$ --- выпуклая на $(-\infty, 0)$, а следовательно, ввиду возрастания справа от $-\infty$, и всюду возрастающая функция
{\large(}обязательно с плотностью \eqref{g11+} её меры Рисса $\nu_{F\circ (-g_D)}$, сосредоточенной, ввиду конечности $M(z_0)$, вне точки $z_0${\large)}. Таким образом, Теорема   
\ref{th:fsg} охватывает при  $F$ класса $C^2$ {\it все возможные случаи мажорант\/} $M\colon D\to \RR$, постоянных на линиях уровня функции Грина области $D$ с фиксированным полюсом $z_0\in D$.
\end{remark}	
\begin{remark}\label{f2pr} Используя аппроксимацию произвольной выпуклой функции $F\colon (-\infty,0)\to \RR^+$ убывающей последовательностью выпуклых функций класса $C^2$, в предположении, что для одной из аппроксимирующих функций интеграл в в правой части \eqref{itotsfF} конечен,  условие {\it <<$F$ из класса $C^2$>>\/} в Теореме \ref{th:fsg} можно снять, если переписать интеграл в правой части \eqref{itotsfF} в виде 
\begin{equation}\label{f2prf}
	\int_{-t_0}^0f(-t)dF_{\text{\tiny right}}'(t)=-\int_0^{t_0} f(t) \, d F_{\text{\tiny left}}'(-t) .
\end{equation}
 
\end{remark}
					
\paragraph{\textrm{\bf LG.}  {\texttt{Случай $\widehat{D}=D$ и суперпозиция с $-\log (1+g_D)$}}}		Остаёмся в рамках соглашений пп.~{\bf G} перед Теоремой \ref{th:fsg}. Из Примера \ref{exp1gd}  $-\log(1+g_D)\in \sbh (D)$. Учитывая  Теорему C\eqref{iihF}, 	для мажоранты
 $M:=F\circ \bigl(-\log (1+g_D)\bigr)\in \sbh(D)$ 
\begin{multline}\label{denFlog}    
\Delta M=\Delta \Bigl(F\circ \bigl(-\log (1+g_D)\bigr)\Bigr)=\nabla \cdot \nabla \Bigl(F\circ \bigl(-\log (1+g_D)\bigr)\Bigr)
-\nabla \cdot \left(F'\bigl(1+g_D\bigr) \frac{-\nabla g_D}{1+g_D} \right)\\=
 F''\bigl(-\log (1+g_D)\bigr)\frac{|\nabla g_D|^2}{(1+g_D)^2}
+ F'\bigl(-\log (1+g_D)\bigr) \frac{-(1+g_D)\nabla^2 g_D+\nabla g_D \cdot \nabla g_D}{(1+g_D)^2} 
\\
= \Bigl(F''\bigl(-\log (1+g_D)\bigr)+ F'\bigl(-\log (1+g_D)\bigr)\Bigr)\frac{|\nabla g_D|^2}{(1+g_D)^2}
\\ 
\overset{\eqref{{seq}s}}{=}
 \Bigl(F''\bigl(-\log (1+g_D)\bigr)+ F'\bigl(-\log (1+g_D)\bigr)\Bigr)\frac{\Delta \bigl((g_D)^2\bigr)}{2(1+g_D)^2}
\end{multline}
и плотность её меры Рисса $\nu_M$ равна деленному на $2\pi$ последнему  выражению.
\begin{theorem}\label{th:loggD} 
Пусть  $M:=F\circ \bigl(-\log (1+g_D)\bigr)\in \sbh(D)$  и функция $u\in \sbh (D)$ с $u(z_0)\neq -\infty$ и мерой Рисса $\nu_u$ удовлетворяют условию $u\leq M$ на $D\setminus D_0$, $z_0\in D_0$, $b\in \RR^+$. Тогда  найдутся $C,\overline{C}_M\in \RR^+$ вида \eqref{cz0C+}, с которыми  для любых выпуклой возрастающей функции   $f\colon \RR^+ \to \RR^+$ с $f(0)=0$ и функции $s\in \sbh^+(D\setminus D_0)$ при ограничениях   
\begin{equation}\label{1gD2l}
	t_0:=\sup_{z\in D\setminus D_0} \frac{s(z)}{\log \bigr(1+g_D(z,z_0)\bigl)} <+\infty, \quad 
	  f(t_0)\sup_{\partial D_0}g_D\leq b
\end{equation}
выполнено неравенство 
\begin{multline}\label{itotsfF+}
	\int_{D\setminus D_0}  f\Bigl(\frac{s}{\log (1+g_D)}\Bigr)\log (1+g_D) \, d \nu_u\\
	\overset{\eqref{denFlog}}{\leq} \frac{1}{2\pi} \int_{D\setminus D_0} 	f\Bigl(\frac{s}{\log (1+g_D)}\Bigr)
	\frac{g_D\,\Delta \bigl((g_D)^2\bigr) 
	}{2(1+g_D)^2}(F''+F')\circ(-\log (1+g_D)\bigr)  \, d \lambda \\+C\overline{C}_M-Cu(z_0).
\end{multline}
	В частности, если  интеграл в правой части \eqref{itotsfF+} конечен, то конечен  и интеграл в левой части \eqref{itotsfF+}.
	Для $u=\log |h|$ с  $0\neq h\in \Hol (D)$ и $h({\tt Z})=0$ для 	${\tt Z}=\{{\tt z}_k\}_{k\in \NN}\subset D$, левая часть \eqref{itotsfF+}  будет выглядеть как сумма
		\begin{equation*}
	\sum_{{\tt z}_k\in D\setminus D_0}  f\left(\frac{s({\tt z}_k)}{\log \bigl(1+g_D({\tt z}_k, z_0)\bigr)}\right)\log \bigl(1+g_D({\tt z}_k, z_0)\bigr)  .	
	\end{equation*}
\end{theorem}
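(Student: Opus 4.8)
The plan is to apply the master estimate of Theorem~\ref{th:1} to the single test function
\[
v:=f\Bigl(\frac{s}{\log(1+g_D)}\Bigr)\log(1+g_D),
\]
and then to turn the resulting term $\int v\,d\nu_M$ into the right-hand side of \eqref{itotsfF+} by inserting the explicit Riesz density of $M$ furnished by \eqref{denFlog}.

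First I would check that $v$ is an admissible test function, that is $v\in\sbh_0^+(D\setminus D_0;\leq b)$. By Example~\ref{exp1gd} the function $\log(1+g_D)$ is superharmonic and tends to $0$ at $\partial D$, so Proposition~\ref{prggscr} in the form \eqref{loggdt}, applied with the given $s\in\sbh^+(D\setminus D_0)$ whose ratio $s/\log(1+g_D)$ has bounded range by the first hypothesis $t_0<+\infty$ in \eqref{1gD2l}, shows that $v$ is a test function of $\sbh_0^+(D\setminus D_0)$ vanishing at $\partial D$. It remains to verify the normalization. Since $f$ is increasing and $s/\log(1+g_D)\leq t_0$, we have $v\leq f(t_0)\log(1+g_D)$ on $D\setminus D_0$; the elementary inequality $\log(1+x)\leq x$ for $x\geq0$ then yields $v\leq f(t_0)\,g_D$, whence $\sup_{\partial D_0}v\leq f(t_0)\sup_{\partial D_0}g_D\leq b$ by the second hypothesis in \eqref{1gD2l}. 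Thus $v\in\sbh_0^+(D\setminus D_0;\leq b)$, and $z_0\notin(-\infty)_M$ since $M(z_0)=F(-\infty)$ is finite.

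Next I would feed $v$ into Theorem~\ref{th:1}: with the constants $C,\overline{C}_M$ of \eqref{cz0C+} the inequality \eqref{mest+} becomes $\int_{D\setminus D_0}v\,d\nu_u\leq\int_{D\setminus D_0}v\,d\nu_M+C\overline{C}_M-Cu(z_0)$. The decisive step is to bound $\int v\,d\nu_M$. By \eqref{denFlog} the Riesz measure of $M=F\circ(-\log(1+g_D))$ has density
\[
\frac{d\nu_M}{d\lambda}=\frac{1}{2\pi}\,(F''+F')\bigl(-\log(1+g_D)\bigr)\,\frac{\Delta\bigl((g_D)^2\bigr)}{2(1+g_D)^2},
\]
which is nonnegative because $M\in\sbh(D)$. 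Substituting $v$ and using once more $\log(1+g_D)\leq g_D$ together with the nonnegativity of this density, I replace the factor $\log(1+g_D)$ stemming from $v$ by $g_D$, which can only enlarge the integral and produces precisely the right-hand integral of \eqref{itotsfF+}; combined with the previous line this is the asserted inequality. The \emph{in particular} clause is then immediate: finiteness of the right-hand integral forces finiteness of the left through the one-sided bound.

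For the holomorphic version I would put $u=\log|h|$ with $0\neq h\in\Hol(D)$ and $h({\tt Z})=0$. By \eqref{nufZ} and the inequality $n_{\tt Z}\leq n_{\Zero_h}=\nu_u$ one gets $\int_{D\setminus D_0}v\,d\nu_u\geq\sum_{{\tt z}_k\in D\setminus D_0}v({\tt z}_k)$, and since $v({\tt z}_k)=f\bigl(s({\tt z}_k)/\log(1+g_D({\tt z}_k,z_0))\bigr)\log(1+g_D({\tt z}_k,z_0))$ this is exactly the displayed sum. I expect the only genuinely delicate point to be the admissibility of $v$: it rests entirely on the superharmonicity of $\log(1+g_D)$ (Example~\ref{exp1gd}) and on the two conditions in \eqref{1gD2l}, which are engineered so that the range requirement of Proposition~\ref{prggscr} and the boundary normalization $\sup_{\partial D_0}v\leq b$ hold simultaneously; everything else is a direct substitution of the density \eqref{denFlog}.
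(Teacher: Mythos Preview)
Your proof is correct and follows exactly the route the paper takes: apply Theorem~\ref{th:1} with the test function $v=f\bigl(s/\log(1+g_D)\bigr)\log(1+g_D)$, whose admissibility comes from Example~\ref{exp1gd} (formula~\eqref{loggdt}) together with the conditions~\eqref{1gD2l} and the bound $\log(1+g_D)\leq g_D$, and then substitute the Riesz density~\eqref{denFlog}, using $\log(1+g_D)\leq g_D$ once more on the right-hand side. The paper's own argument is a one-sentence sketch saying precisely this; your write-up simply fills in the details.
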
						
	Доказательство опускаем, поскольку оно выводится из Теоремы \ref{th:1} так же, как доказательство Теоремы 	\ref{th:fsg}, но для тестовых функций  $f\bigl(\frac{s}{\log (1+g_D)}\bigr)\log (1+g_D)$ из Примера  \ref{exp1gd}, приведённых в \eqref{loggdt}. При этом  ко второму неравенству в \eqref{1gD2l} и в правой части \eqref{itotsfF+} применяем очевидное неравенство $\log(1+g_D)\leq g_D$.
		
	Один из наиболее простых выборов функции $s\in \sbh^+(D)$, удовлетворяющих условиям \eqref{1gD2l} с некоторой постоянной $b$, --- это степень $g_D^{p}$ с $p\geq 1$ или суперпозиция $q \circ g_D$ c выпуклой возрастающей функция $q\colon \RR^+\to \RR^+$, для которой 
	$q(t)=O(t)$ при $0<t\to 0$.  
	\subsubsection{\underline{Мажоранта-суперпозиция с гиперболическим радиусом}}\label{MgcR} Далее, как и в  п.~\ref{excr} функция $R\colon D\to \RR^+$ --- гиперболический радиус области $D$ с не менее чем тремя граничными точками. Предполагаем, что $\infty \notin \partial D$.
	\paragraph{\textrm{\bf R.}  {\texttt{Выпуклая область $D$ и суперпозиция с  $-R$}}} 
		Пусть, как и в п.~\ref{mspfG}, $F\colon (-\infty,0)\to \RR$ --- выпуклая возрастающая функция,  доопределенная, как в  
		п.~\ref{mspfG}, по непрерывности  в точке $-\infty$. Для выпуклой области $D$, как отмечено выше в Примере \ref{ex:Rconv}, её гиперболический, или конформный, радиус супергармоничен, т.\,е. $-R\in \sbh (D)$. Тогда по  
		Теореме C\eqref{iihF} функция $M:=F\circ (-R)$ субгармонична в $D$ и для $F$ класса $C^2$ плотность её меры Рисса $\nu_M$ равна
			\begin{multline*}
\frac{1}{2\pi} \Delta \bigl(F\circ (-R)\bigr)=\frac{1}{2\pi} \nabla^2 \bigl(F\circ (-R)\bigr) =\frac{1}{2\pi} \nabla \cdot \bigl(F'(-R)\nabla (-R)\bigr)\\
=\frac{1}{2\pi} F''(-R)|\nabla R|^2+\frac{1}{2\pi}F'(-R) \nabla^2 (-R)	,		
			\end{multline*}
	откуда по уравнению Лиувилля \eqref{RdR} для конформного радиуса имеем
		\begin{multline}\label{df:LFR}
	\frac{1}{2\pi} \Delta \bigl(F\circ (-R)\bigr)	=\frac{1}{2\pi} F''(-R)(R\Delta R+4)-\frac{1}{2\pi} F'(-R) \Delta R\\
	=\frac{2}{\pi} F''(-R) +\frac{1}{2\pi}\Bigl(F''(-R)R-F'(-R)\Bigr)\Delta R.
	\end{multline}
Следующий результат доказывается так же, как Теоремы 	\ref{th:fsg} и \ref{th:loggD}, но для тестовых функций  из Примера  \ref{ex:Rconv}.

\begin{theorem}\label{th:Rconv} В соглашениях и обозначениях этого пп.~{\rm{\ref{MgcR}{\bf R}}} для  $u\in \sbh(D)$ с $u(z_0)\neq -\infty$ и мерой Рисса $\nu_u$ с ограничением $u\leq F\circ (-R)$ на $D\setminus D_0$ для любого числа $b\in \RR^+$ найдутся постоянные
 $C,\overline{C}_M$ вида \eqref{cz0C+}, с которыми  для любых 
выпуклой возрастающей функции   $f\colon \RR^+ \to \RR^+$ с $f(0)=0$ и функции $s\in \sbh^+(D\setminus D_0)$ при ограничениях   
\begin{equation*}\label{1gD2l+}
	t_0:=\sup_{z\in D\setminus D_0} \frac{s(z)}{R(z,D)} <+\infty, \quad 
	f (t_0) \sup_{z\in \partial D_0}   R(z,D)\leq b
\end{equation*}
выполнено неравенство 
\begin{multline}\label{itotsfF++}
	\int_{D\setminus D_0} R f\Bigl(\frac{s}{R}\Bigr) \, d \nu_u
		\overset{\eqref{df:LFR}}{\leq}  \frac{1}{2\pi}\int_{D\setminus D_0} 	Rf\Bigl(\frac{s}{R}\Bigr) \Bigl( 4F''(-R) 
		\\+\bigl(F''(-R)R-F'(-R)\bigr)\Delta R\Bigr)\, d \lambda +C\overline{C}_M-Cu(z_0).
\end{multline}
В частности, если  интеграл в правой части \eqref{itotsfF++} конечен, то конечен  и интеграл в левой части \eqref{itotsfF++}.
Для $u=\log |h|$ с функцией $0\neq h\in \Hol (D)$, 	обращающейся в нуль на 	${\tt Z}=\{{\tt z}_k\}_{k\in \NN}\subset D$, левая часть \eqref{itotsfF++}  будет выглядеть как сумма
	$\sum_{{\tt z}_k\in D\setminus D_0}  R({\tt z}_k,D) f\bigl({s({\tt z}_k)}/{R({\tt z}_k,D)}\bigr) $.	
\end{theorem}
	\paragraph{\textrm{\bf LR.}  {\texttt{Область $D$ гиперболического типа и суперпозиция с  $-\log R$}}}  При ап\-р\-иорных  условиях  \eqref{hypR}  можем использовать  тестовую функцию 	вида \eqref{loggdtR} из Примера \ref{exp1gdR}.
Кроме того, для  области $D\subset \CC_{\infty}$ гиперболического типа с гиперболическим радиусом  $R$ всегда  $-\log R\in \sbh(D)$ \cite[{\bf 3.1}]{Av15} в силу  равенства 
\begin{equation}\label{DlogR}
\Delta \log R=-\frac{4}{R^2}<0 \quad\text{на $D$}.
\end{equation}
 Для выпуклой возрастающей функции $F\colon \RR \to \RR$ класса $C^2$ и субгармонической, по Теореме C\eqref{iihF},   мажоранты $M:=F\circ (-\log R)$ на $D$ с плотностью меры Рисса относительно меры Лебега, определяемой делением на $2\pi$ любого из равных друг другу выражений 
\begin{multline}\label{df:lognR}
	\Delta M=\nabla^2 \bigl(F\circ (-\log R)\bigr)\overset{\eqref{DlogR}}{=}
	F''(-\log R)\,\begin{cases}
		&\bigl|\nabla (\log  R)\bigr|^2
			\\ &\bigl|\nabla R|^2/R^2
\end{cases}
\Biggm\}
+\dfrac{4}{R^2}\,F'(-\log R)
\\
	\overset{\eqref{{seq}s}}{=}
\begin{cases}
	&\dfrac12\,F''(-\log R)\Delta \log^2 R +\dfrac{4}{R^2} \bigl(F''(-\log R)\log R +F'(-\log R)\bigr),\\
	&\dfrac12\,F''(-\log R)\dfrac{\Delta R^2}{R^2} -\dfrac1R F''(-\log R) \Delta R+ \dfrac{4}{R^2} F'(-\log R),
\end{cases}
\end{multline}
также можно легко выписать  аналог Теоремы \ref{th:loggD} с  тестовыми функциями из Примера \ref{exp1gdR} или любыми иными.
Возможный вариант ---
\begin{theorem}\label{th:loggD++} 
В соглашениях и обозначениях этого пп.~{\rm{\ref{MgcR}{\bf LR}}} для  функции $u\in \sbh (D)$ с $u(z_0)\neq -\infty$ и мерой Рисса $\nu_u$ 
с ограничением $u\leq F\circ (-\log R)$ на $D\setminus D_0$ для любого числа $b\in \RR^+$ найдутся постоянные
  $C,\overline{C}_M\in \RR^+$ вида \eqref{cz0C+}, с которыми  для любых выпуклой возрастающей функции   $f\colon \RR^+ \to \RR^+$ с $f(0)=0$ и функции $s\in \sbh^+(D\setminus D_0)$ при ограничениях   
\begin{equation*}
	t_0:=\sup_{z\in D\setminus D_0} \frac{s(z)}{\log (1+NR)} <+\infty, \quad 
	  f(t_0)\sup_{\partial D_0}R\leq b/N
\end{equation*}
выполнено неравенство 
\begin{multline}\label{itotsfF+l}
	\int_{D\setminus D_0}  f\Bigl(\frac{s}{\log (1+NR)}\Bigr)\log (1+NR) \, d \nu_u\\
	\overset{\eqref{denFlog}}{\leq} \frac{N}{4\pi} \int_{D\setminus D_0} 	f\Bigl(\frac{s}{\log (1+NR)}\Bigr)
	\Bigl(	F''(-\log R)R\Delta \log^2 R \\
	+\dfrac{8}{R} \bigl(F''(-\log R)\log R +F'(-\log R)\bigr)\Bigr)\, \, d\lambda
		+C\overline{C}_M-Cu(z_0).
\end{multline}
	В частности, если  интеграл в правой части \eqref{itotsfF+l} конечен, то конечен  и интеграл в левой части \eqref{itotsfF+l}.
	Для $u=\log |h|$ с\/  $0\neq h\in \Hol (D)$ и $h({\tt Z})=0$ для 	${\tt Z}=\{{\tt z}_k\}_{k\in \NN}\subset D$, левая часть \eqref{itotsfF+l}  будет выглядеть как сумма
		\begin{equation*}
	\sum_{{\tt z}_k\in D\setminus D_0}  f\left(\frac{s({\tt z}_k)}{\log \bigl(1+NR({\tt z}_k, z_0)\bigr)}\right)\log \bigl(1+NR({\tt z}_k, z_0)\bigr)  .	
	\end{equation*}
\end{theorem}						

	\subsubsection{\underline{Мажоранты от расстояния до подмножества на $\partial D$}}\label{MdistED} 
Пусть $E\subset \CC_{\infty}$ --- {\it непустое замкнутое собственное подмножество в\/} $\CC_{\infty}$, 
 \begin{equation}\label{dDzE}
d_E(z):=\dist (z,E) \quad\text{при $z\in \CC$}, \quad  d_E(\infty):=\begin{cases}
	+\infty \quad &\text{при $\infty\notin E$},\\
	0 \quad &\text{при $\infty\in E$}.
\end{cases}
\end{equation}
Для  различных форм записей или для краткости введём  обозначения 
\begin{subequations}\label{seq:d}
\begin{align}
\mathcal O&:=\complement  E:= \CC_{\infty}\setminus E\neq \varnothing  
\quad\text{--- дополнение $E$ до $\CC_{\infty}$,}
\tag{\ref{seq:d}O}\label{{seq}O}	
\\
\intertext{являющееся {\it открытым собственным подмножеством в\/ $\CC_{\infty}$,}}
d_{\CC_{\infty}\setminus \mathcal O}(z) &=d_{\complement \mathcal O}=d_E(z) , \quad z\in \CC.
\tag{\ref{seq:d}d}\label{{seq}d}
\end{align}
\end{subequations}

Рассмотрим сначала 
\paragraph{\textrm{\bf D.}  {\texttt{Случай  $E=\complement D$      и мажоранты $M=F\circ (-d_E)$}}}
В этом пп.~{\bf D} предполагаем, что  $D:=\CC_{\infty} \setminus E$ --- {\it область\/} такая же, как в пп.~\ref{mspfG}{\bf G} и для нее выполнено условие \ref{gs1}g или пара условий \ref{gs2}g--\ref{gs3}g из п.~\ref{distt}.  

\begin{theorem}\label{distgF} Пусть $F\colon (-\infty, 0)\to \RR$ --- выпуклая функция класса $C^2$ и  функция $u\in \sbh (D)\setminus \{\boldsymbol{-\infty}\}$ удовлетворяет неравенству  $u\leq M:=F\circ (-d_{\complement D})$ на $D$; постоянные\/ $0< b_0\leq 1\leq B_0<+\infty$ из \eqref{seqq};  $b\in \RR^+$. Тогда найдутся подобласть\/ $D_0\Subset D$ и постоянные 
  $C,\overline{C}_M\in \RR^+$ вида \eqref{cz0C+}, с которыми 
		\begin{enumerate}[{\rm (a)}]
		\item\label{fdF-1} для любой   выпуклой возрастающей функции $f\colon \RR^+\to \RR^+$ с $f(0)=0$ при $t_0:=B_0\sup\limits_{\partial D_0} d_{\complement D}$
		 и $ f(t_0)\leq b$ справедливо неравенство
		\begin{equation}\label{itotsfFdtd}
	\int_{D\setminus D_0} f(b_0d_{\complement D}) \, d \nu_u\leq \frac{1}{B_0^2}\int_0^{t_0}f(t)F''\Bigl	(-\frac{1}{B_0}t\Bigr) \, dt+C\overline{C}_M-Cu(z_0);
\end{equation}
	\item\label{fdF-2} для любой выпуклой убывающей функции   $f\colon \RR^+ \to \RR^+$   с правой производной 
	$f'_{\text{\rm\tiny right}}(0)<+\infty$ и  с $f(0)B_0\sup\limits_{\partial D_0}d_{\complement D}\leq b$  выполнено неравенство
	\begin{multline}\label{itotsfFgd}
\int_{D\setminus D_0} d_{\complement D}\,f\Bigl(\frac{1}{b_0d_{\complement D}}\Bigr) \, d \nu_u\\
\leq  \frac{1}{b_0B_0^2}\int_0^{t_0} tf({1}/{t})F''\bigl(-{t}/{B_0}\bigr)  \, dt+C\overline{C}_M-Cu(z_0).
\end{multline}
		\end{enumerate}
В частности, если  интеграл в правой части \eqref{itotsfFdtd} или \eqref{itotsfFgd} конечен, то конечен  и интеграл в левой части соответственно \eqref{itotsfFdtd} или \eqref{itotsfFgd}. Для $u=\log |h|$ с ненулевой функцией $h\in \Hol (D)$, 	обращающейся в нуль на 	${\tt Z}=\{{\tt z}_k\}_{k\in \NN}\subset D$, левая часть \eqref{itotsfFdtd} или  \eqref{itotsfFgd} будет соответственно выглядеть как сумма
			$\sum\limits_{{\tt z}_k\in D\setminus D_0} f\bigl(b_0d_{\complement D}({\tt z}_k)\bigr)$ или 
	$\sum\limits_{{\tt z}_k\in D\setminus D_0} d_D({\tt z}_k) \,f\bigl(1/b_0d_{\complement D}({\tt z}_k)\bigr)$.	

\end{theorem}
\begin{proof}
Достаточно воспользоваться соответственно Случаями \eqref{gFt0a} и \eqref{gFt0b} Теоремы \ref{th:fsg}
и оценками \eqref{gdfgdd1} и \eqref{gdfgdd1qq} Предложения \ref{prDDdistd1} для тестовых функций,  а также  оценкой $u\leq F(-d_{\complement D})\leq F(-g_D/B_0)$ на $D\setminus D_0$, где подобласть $D_0\Subset D$ расширена настолько, что  $z_0\in  U_{t_0}\subset D_0$. 
\end{proof}
\begin{remark}\label{rrdg} См. Замечание \ref{f2pr} с \eqref{f2prf}.
\end{remark}

\paragraph{\textrm{\bf d.}  {\texttt{О функции расстояния}}}
Напомним некоторые свойства функции расстояния, предварив их необходимыми определениями из \cite{CS}.
\begin{definition}[{\cite[Definitions 1.1.1, 2.1.1]{CS}}]\label{df:scnc}
Пусть $S\subset \CC$. Непрерывная функция $q\colon S\to \RR$ называется {\it полувогнутой\/} (с линейным модулем) на  $S$, если существует число $K\in \RR^+$, для которого   
\begin{equation*}
	tq(z)+(1-t)q(z')-q\bigl(tz+(1-t)z'\bigr)\leq 	\frac{K}{2} t(1-t)|z-z'|^2 
\end{equation*}
для всех $0\leq t \leq 1$ и $z,z'\in S$, для которых отрезок $[z,z']$ содержится  в $S$. Число  $K$ называется {\it постоянной полувогнутости\/} для $q$ на $S$. 

Через $\scl (\mathcal O')$ обозначается класс всех полувогнутых на $S$ функций; $\scl_{\loc} (S)$ --- класс локально полувогнутых функций, т.\,е. полувогнутых на каждом компактном в $\CC$ подмножестве из $S$.
\end{definition}

В случае {\it открытого выпуклого  множества\/} $S\subset \CC$  функция $g$ полувогнута на $S$ с постоянной полувогнутости $K$ тогда и только тогда, когда функция $z\mapsto g(z)-\frac{K}{2} |z|^2$, $z\in S$, вогнутая на $S$ \cite[Proposition 1.1.3]{CS}.

\begin{definition}[{\large(}
{\cite[Definition 2.2.1]{CS}, \cite[{\bf 1}]{Nour}{\large)}}]\label{df:intsp} Замкнутое множество $S\subset \CC$ удовлетворяет {\it условию внутренней сферы\footnote{interior sphere condition}} для некоторого $r>0$, если $S$ --- объединение замкнутых кругов радиуса $r$, т.\,е. для каждой точки  $z\in S$ найдётся точка $z'\in \CC$, для которой
$z\in \overline{D}(z',r)\subset S$.  Множество $S$ удовлетворяет {\it условию внешней сферы\footnote{exterior sphere condition}} для некоторого $r>0$, если для каждой точки $z\in \partial S$ найдётся точка $z'\notin S$, для которой ${D}(z',r)\cap S=\varnothing$ и $|z-z'|=r$. 

Если значение $r>0$ не уточняется, то говорят, что $S$ удовлетворяет {\it равномерному условию\/}
соответственно  {\it внутренней} и {\it внешней сферы}.
\end{definition}

{\textsc{Свойства} \underline{функции расстояния}} {{\large(}\cite[Proposition 2.2.2]{CS}, \cite[Proposition 3.2]{ACTS}, \cite[{\bf 14.6}]{GT}{\large)}}. {\it Пусть $E\subset \CC$ непустое и замкнутое подмножество в $\CC$. Тогда
\begin{enumerate}
\item\label{dE0} Функция $d_E$ удовлетворяет условию Липшица
\begin{equation*}
	\bigl|d_E(z)-d_E(z')\bigr|\leq |z-z'| \quad\text{для всех $z,z'\in \CC$,}
	\end{equation*}
дифференцируема почти всюду  на $\mathcal O':=\CC\setminus E$ и даже на $\mathcal O'\setminus {\it\Sigma} $, где  $\it\Sigma$ --- объединение не более чем счётного числа спрямляемых дуг,  а также для неё имеет место  уравнение эйконала 
\begin{equation}\label{eikon}
	\begin{cases}
		\bigl|\nabla d_E\bigr|^2&=1\quad\text{почти всюду на $\mathcal O'$ и даже на $\mathcal O'\setminus\it\Sigma$},\\
		d_E\Bigm|_{\partial \mathcal O'}&=0.
	\end{cases}
\end{equation}
 \item\label{dEd} Если $\mathcal O':=\CC\setminus E$ --- ограниченная в $\CC$ область с границей класса $C^2$ с кривизной $k$, то функция $d_E$ класса $C^2$ в  $\{z\in \clos \mathcal O\colon d_E(z)<k\}$.  
	\item\label{dE1} $d_E^2:=(d_E)^2\in \scl (\CC)$  --- функция с постоянной полувогнутости, равной $2$, т.\,е. функция 
$z \mapsto d_E^2(z)-|z|^2$ вогнутая на $\CC$. 

	\item\label{dE2} $d_E\in \scl_{\loc} (\CC \setminus E)$. Более точно и общ\'о, для любого $S\subset \CC$ при условии $\dist (S,E)>0$  функция $d_E$ полувогнута на $S$ с постоянной полувогнутости, равной $1/\dist (S,E)$. В частности, если $S$ ещё и выпуклое множество, то функция 
	\begin{equation}\label{conde}
	z\mapsto  d_E(z) -\frac{1}{2\dist (S,E)}\,|z|^2 \quad\text{вогнута на $S$.}
	\end{equation}
	\item\label{dE3} Если множество $E$ удовлетворяет условию внутренней сферы для  некоторого $r>0$, то функция $d_E$ полувогнута на замыкании в $\CC$ множества $\CC\setminus E$ с постоянной полувогнутости, равной $1/r$.
\item\label{dE-} $d_E$ не может быть локально полувогнутой на всей плоскости\/ $\CC$.
\end{enumerate}
}
\begin{propos}\label{pr:sprdE} Пусть $\varnothing \neq E\subset \CC$ замкнутое в $\CC$	; $\mathcal O':=\CC\setminus E$. 
\begin{enumerate}
	\item\label{sph1} Функция $d_E^2$ --- $\delta$-субгармоническая на $\CC$ 
	с зарядом Рисса $\nu_{d_E^2}\leq \frac{2}{\pi} \lambda$ на $\CC$, где, как и выше, $\lambda$ --- мера Лебега на $\CC$.
\item\label{sph2} Функция $d_E$ --- $\delta$-субгармоническая на $\mathcal O'$. При этом для любого выпуклого подмножества $S\subset \mathcal O'$ при условии 
$\dist (S,E)>0$  
её заряд Рисса $\nu_{d_E}$ удовлетворяет условию  $\nu_{d_E}\leq \frac{1}{\pi\dist (S,E)} \lambda$ на $S$.
\end{enumerate}
\end{propos}
\begin{proof}
\ref{sph1}. Из п.~\ref{dE1} Свойств функция $z\mapsto d_E^2(z)-|z|^2$ --- вогнутая и, в частности,  супергармоническая на $\CC$,  а оператор Лапласа от неё отрицателен в смысле распределений. Следовательно, $d_E^2$ --- разность выпуклых функций, первая из которых $z\mapsto |z|^2$, т.\,е. $d_E^2\in \dsbh(\CC)$ с зарядом Рисса  $\nu_{d_{E}^2}:=\frac{1}{2\pi} \Delta d_{E}^2\leq  \frac{1}{2\pi}\Delta |\cdot |^2=\frac{2}{\pi}\lambda$. 

\noindent
\ref{sph2}. Из п.~\ref{dE2} Свойств  $d_E$ --- $\delta$-субгармоническая функция в каждом, конечно же, выпуклом круге $D(z,r)\Subset \mathcal O'$, а значит и на $\mathcal O'$. На каждом выпуклом $S$ при условии $\dist (S, E)>0$ функция из \eqref{conde} супергармоническая, и тогда для заряда Рисса $\nu_{d_E}$  функции $d_E\in \dsbh(S)$ в смысле теории распределений имеем
\[ 
\nu_{d_E}:=\frac{1}{2\pi} \Delta d_E\leq  \frac{1}{2\pi}\frac{1}{2\dist (S,E)}\Delta |\cdot |^2=\frac{1}{\pi \dist (S,E)}\lambda. 
\]
\end{proof}

\paragraph{\textrm{\bf L.}  {\texttt{Функция $F\circ \log (1/d_E)$}}}
Далее будет использованы, наряду с  рассмотренной в пп. {\bf D}, и другие возможные версии мажорант $M$ на $D$, зависящих только от функции расстояния $d_E$, когда $E=\complement D$ --- дополнение области $D$ до $\CC_{\infty}$ или $E\subset \partial D$  --- часть границы области $D$ {\large(}см. и ср. \cite{BGK09}--\cite{FR13}{\large)}. 

Для непустого собственного замкнутого подмножества  $E\subset \CC_{\infty}$ продолжаем использовать обозначения
\eqref{dDzE}--\eqref{seq:d} в предположении, что  $E\neq \{\infty\}$.   Как и в \cite[Примеры]{KhF},  рассмотрим  функции 
\begin{equation*}
z\mapsto \log |z| , \quad  z\mapsto \log^+ |z|, \quad   z\mapsto \log (1+|z|),  \qquad z \in \CC,
\end{equation*}
субгармонические в $\CC$. Суперпозиции  этих функций с любой  функцией $h\in \Hol (\mathcal O)$, т.\,е.  $\log |h|$, $\log^+|h| $,  $\log \bigl(1+|h|\bigr)$ --- субгармонические,  а две последние ещё  и непрерывные функции в $\mathcal O\overset{\eqref{{seq}O}}{=}\CC_{\infty}\setminus E$ \cite[Следствие 2.5.7]{Klimek}.	
Следовательно, точные верхние грани этих функций при $h_w(z):=\frac{1}{w-z}$ по индексу $w\in E$, равные соответственно 
\begin{equation}\label{df:lL}
\log \frac1{d_E}\,, \quad \log^+ \frac1{d_E}\geq 0\, , \quad
\log \Bigl(1+\frac1{d_E}\Bigr)\geq 0,
\end{equation}
будучи непрерывными, также являются субгармоническими функциями на $\mathcal O$. Эти функции, если исключить постоянные, можно считать в определенном смысле субгармоническими функциями на $\mathcal O$ минимального роста вблизи   $\partial \mathcal O$ при условии  зависимости их исключительно  от  функции расстояния $d_E$.  

Если $F\colon \RR\to \RR$ --- возрастающая выпуклая функция, то  по Теореме C\eqref{iihF} субгармонична и суперпозиция функции $F$ с любой из трёх функций \eqref{df:lL}. Традиционны  степенная и экспоненциальная функции $F$.
\begin{enumerate}
	\item[{[p]}] При $p\in [1,+\infty)$ --- субгармоническая суперпозиция степенной функции $x\mapsto x^p$, $x\in \RR^+$, с функциями из \eqref{df:lL}:
	\begin{equation}\label{df:lLp}
	\left(\log^+ \frac1{d_E}\right)^p\geq 0 \quad\text{или} \quad 
	\log^p \Bigl(1+\frac1{d_E}\Bigr)\geq 0,
	\end{equation}
	\item[{[e]}] При $p\geq 0$ --- субгармоническая суперпозиция экспоненциальной  функции $x\mapsto \exp(px)$, $x\in \RR$, с функциями из \eqref{df:lL}, из которых выпишем только суперпозицию с первой функцией:
	\begin{equation}\label{df:lLe}
	\left(\frac{1}{d_E}\right)^p=\exp\left(p \log \frac{1}{d_E}\right). 
	\end{equation}
\end{enumerate}
Функции из \eqref{df:lL} отличаются друг от друга вблизи границы   $\partial  \mathcal O$ не более чем на константу. Тогда суперпозиции  $F$ с функциями из \eqref{df:lL}  в рамках  рассматриваемой тематики различаются 
вблизи  $\partial  \mathcal O$ несущественно,  если выпуклая функция  $F$ растёт не быстрее экспоненциальной функции, см. \eqref{df:lLp}--\eqref{df:lLe}. Поэтому здесь разбирается только случай суперпозиции 
\begin{equation}\label{dDzEM}
F\circ \log \frac{1}{d_E} \in \sbh (D), 
\end{equation}
с {\it выпуклой возрастающей функцией  $F\colon \RR\to \RR$ класса $C^2$.\/}

Предположим, что функция  расстояния $d_E$ из класса $C^2$ в некотором открытом подмножестве открытого множества  $\mathcal O$,  реализации чего см. в Свойствах   \ref{dE0}--\ref{dEd} функции расстояния из пп.~{\bf d}.  Плотность меры Рисса $\nu_{F\circ \log \frac{1}{d_E}}$ функции \eqref{dDzEM} относительно меры Лебега $\lambda$ на таком подмножестве определяется   любым из равных друг другу выражений 
\begin{multline}\label{DeltaME}
d\nu_{F\circ \log \frac{1}{d_E}}=\frac{1}{2\pi}\Delta \bigl( F\circ (-\log {d_E})\bigr) \, d\lambda
=\frac{1}{2\pi}\nabla \cdot \bigl(-F'(-\log {d_E})  \nabla \log {d_E}\bigr)\, d\lambda\\
=\frac{1}{2\pi}F''(-\log {d_E})|\nabla \log {d_E}|^2\, d\lambda+\frac{1}{2\pi}F'(-\log {d_E})\Delta \log\frac{1}{{d_E}}\, d\lambda\\
=\frac{1}{2\pi}F''(-\log {d_E}) \frac{|\nabla {d_E}|^2}{{d_E}^2}\, d\lambda - \frac{1}{2\pi}F'(-\log {d_E}) \frac{{d_E}\nabla^2 {d_E}-|\nabla {d_E}|^2}{{d_E}^2}\, d\lambda\\
\overset{\eqref{eikon}}{=}\frac{1}{2\pi}\bigl(F''(-\log {d_E}) + (1-d_E\Delta d_E) F'(-\log {d_E}) \bigr)\frac{1}{d_E^2}\, d\lambda.
 \end{multline}

\paragraph{\textrm{\bf DL.}  {\texttt{Случай  $E=\CC_{\infty} \setminus D$ и мажоранты $M:=F\circ \log (1/d_E)$}}}
В этом пп.~{\bf DL} рассматриваем только области $D$, удовлетворяющие 
условию \ref{gs1}g, в частности, с границей класса $C^2$. Из вычислений оператора Лапласа в \eqref{DeltaME} по аналогии с Теоремами
\ref{th:loggD}--\ref{th:loggD++} легко следует

\begin{theorem}\label{distgFElog} Пусть $F\colon \RR\to \RR$ --- выпуклая функция класса $C^2$ и  функция $u\in \sbh (D)\setminus \{\boldsymbol{-\infty}\}$ удовлетворяет неравенству  $u\leq M:=F\circ (-\log d_{\complement D})$ на $D$; 	постоянные\/ $0< b_0\leq 1\leq B_0<+\infty$ из \eqref{seqq} и соотношения \eqref{gdfgdd1} Предложения\/ {\rm  \ref{prDDdistd1}};
		$b\in \RR^+$. Тогда найдутся подобласть\/ $D_0\Subset D$ и постоянные 
	числа\/ $0<a\leq 1\leq A <+\infty$ и\/     
	$C,\overline{C}_M\in \RR^+$ вида \eqref{cz0C+}, с которыми 
	\begin{enumerate}[{\rm (a)}]
		\item\label{fdF-1+} для любой   выпуклой возрастающей функции $f\colon \RR^+\to \RR^+$ с $f(0)=0$ при $t_0:=B_0\sup\limits_{\partial D_0} d_{\complement D}$
		и $ f(t_0)\leq b$ справедливо неравенство
		\begin{multline}\label{itotsfFdtdq}
		\int_{D\setminus D_0} f(b_0d_{\complement D}) \, d \nu_u
			\leq 
		\frac{1}{B_0^2} f(d_{\complement D})\frac{1}{2\pi}\bigl(F''(-\log d_{\complement D})\\
		 + (1-d_{\complement D}\Delta d_{\complement D}) F'(-\log {d_{\complement D}}) \bigr)\frac{1}{d_{\complement D}^2}\, d\lambda
		+C\overline{C}_M-Cu(z_0);
		\end{multline}
		\item\label{fdF-2+} для любой выпуклой убывающей функции   $f\colon \RR^+ \to \RR^+$   с правой производной 
		$f'_{\text{\rm\tiny right}}(0)<+\infty$ и  с $f(0)B_0\sup\limits_{\partial D_0}d_{\complement D}\leq b$  выполнено неравенство
				\begin{multline}\label{itotsfFgdq}
		\int_{D\setminus D_0} d_{\complement D}\,f\Bigl(\frac{1}{b_0d_{\complement D}}\Bigr) \, d \nu_u\\
		\leq 
		\frac{1}{b_0B_0^2} f(d_{\complement D})\frac{1}{2\pi}\bigl(F''(-\log d_{\complement D})\\
		+ (1-d_{\complement D}\Delta d_{\complement D}) F'(-\log {d_{\complement D}}) \bigr)\frac{1}{d_{\complement D}^2}\, d\lambda +C\overline{C}_M-Cu(z_0).
		\end{multline}
	\end{enumerate}
	В частности, если  интеграл в правой части \eqref{itotsfFdtdq} или \eqref{itotsfFgdq} конечен, то конечен  и интеграл в левой части соответственно \eqref{itotsfFdtdq} или \eqref{itotsfFgdq}. Для $u=\log |h|$ с ненулевой функцией $h\in \Hol (D)$, 	обращающейся в нуль на 	${\tt Z}=\{{\tt z}_k\}_{k\in \NN}\subset D$, левая часть \eqref{itotsfFdtdq} или \eqref{itotsfFgdq} будет соответственно выглядеть как сумма
		$\sum\limits_{{\tt z}_k\in D\setminus D_0} f\bigl(b_0d_{\complement D}({\tt z}_k)\bigr)$ или 
	$\sum\limits_{{\tt z}_k\in D\setminus D_0} d_D({\tt z}_k) \,f\bigl(1/b_0d_{\complement D}({\tt z}_k)\bigr)$.	
	
\end{theorem}

\paragraph{\textrm{\bf DLE.}  {\texttt{Случай  \underline{строгого включения} $E\subset \partial D$  и мажоранты $M:=F\circ \log (1/d_E)\bigm|_D$}}}

Области $D$ и  замкнутому непустому  подмножеству $E\subset \partial D$ сопоставим   расширение области $D$ до области 
\begin{equation}\label{wD_E}
\widehat{D}_E=D\bigcup \Bigl\{D\bigl(z,r(z)\bigr)\colon z\in \partial D\setminus E, \; r(z):=\dist (z, E)\Bigr\}\supset D.
\end{equation}

Кроме того, расстояния $\dist (\cdot, E)$ или $\dist d(\cdot, \partial D)$  будем обозначать и через $d_D(\cdot, E)$ или 
$d_D(\cdot, \partial D)$, где соответственно $\cdot$ --- точки из области $D$  и только.

\begin{theorem}\label{th:distEDD} В обозначениях \eqref{dDzE}, \eqref{dDzEM}, \eqref{wD_E} 
 и соглашениях об области $ D$ и функции $F$ данного п.~{\rm \ref{MdistED}} предположим ещё, что  как область $D$, так и её расширение $\widehat{D}_E$ удовлетворяют условию {\rm\ref{gs1}g} или одновременно паре условий {\rm\ref{gs2}g}--{\rm\ref{gs3}g} из п.~{\rm\ref{distt}}. Пусть $u\in \sbh(D)$ с $u(z_0)\neq -\infty$ и мерой Рисса $\nu_u$ с ограничением $u\overset{\eqref{dDzE}}{\leq} F\circ \bigl(-\log d_E\bigr)$ на $D\setminus D_0$.  Тогда для любого числа $b\in \RR^+$ найдутся постоянные  $C,\overline{C}_M$ вида \eqref{cz0C+}, с $C$, зависящей и от $E$, с которыми для любой выпуклой убывающей функции $f\colon \RR^+\to [0,b]$  с правой производной 
	$f'_{\text{\rm\tiny right}}(0)<+\infty$,  можно подобрать  числа $0<a\leq 1\leq A<+\infty$, зависящие только от $D$, $z_0$ и $E$, так, что в обозначениях \eqref{dDzE} 
\begin{multline}\label{itotsfFE+}
	\int\limits_{D\setminus D_0} d_D(\cdot , \partial D) \, f\left(A\frac{d_D(\cdot , E)}{d_D(\cdot , \partial D)}\right)\,  d\nu_u
	\overset{\text{\eqref{dDzEM}--\eqref{DeltaME}}}{\leq}\\
	A\int\limits_{D\setminus D_0} d_D(\cdot , \partial D) \,f\left(a\frac{d_D(\cdot , E)}{d_D(\cdot , \partial D)}\right) \frac{1}{2\pi}\bigl(F''(-\log {d_E}) + (1-d_E\Delta d_E) F'(-\log {d_E}) \bigr)\frac{1}{d_E^2}\, d\lambda\\
	+C\overline{C}_M-Cu(z_0).
\end{multline}
В частности, если  интеграл в правой части \eqref{itotsfFE+} конечен, то конечен  и интеграл в левой части \eqref{itotsfFE+}.
Для $u=\log |h|$ с ненулевой функцией $h\in \Hol (D)$, 	обращающейся в нуль на 	${\tt Z}=\{{\tt z}_k\}_{k\in \NN}\subset D$, левая часть \eqref{itotsfFE+}  будет выглядеть как сумма
		\begin{equation*}
	\sum_{{\tt z}_k\in D\setminus D_0} d_D({\tt z}_k , \partial D) \, f\left(A\frac{d_D({\tt z}_k , E)}{d_D({\tt z}_k , \partial D)}\right)	.
	\end{equation*}
\end{theorem}
\begin{proof} При условии {\rm\ref{gs1}g} или  паре условий {\rm\ref{gs2}g}--{\rm\ref{gs3}g} из п.~{\rm\ref{distt}} область $D\Subset \CC$ регулярна, а граница $\partial \widehat{D}_E$ неполярна. Таким образом, существуют как  функция Грина $g_D:=g_D(\cdot, z_0)$ для $D$, так  и функция Грина $g_{\widehat{D}_E}:=g_{\widehat{D}_E}(\cdot, z_0)$ для $\widehat{D}_E$, выбранная  с тем же полюсом $z_0$. 
Функция  $f$ может быть продолжена на $\RR$ как выпуклая убывающая. По Предложению \ref{prgg} функция $g_Df\bigl(g_{{\widehat{D}_E}}/g_D\bigr)$ тестовая и принадлежит классу $\sbh_0^+(D\setminus D_0; \leq b_0b)$, где $b_0=\sup\limits_{z\in \partial D_0}g_D(z,z_0)$ зависит только от $D$, $D_0$ и $z_0$.  По Теореме \ref{th:1}  в условиях доказываемой теоремы найдутся требуемые постоянные $C,\overline{C}_M$, с которыми
\begin{multline}\label{mest+E}
\int\limits_{D\setminus D_0} g_Df\Bigl(\frac{g_{{\widehat{D}_E}}}{g_D}\Bigr)  \,d {\nu}_u 		 
\leq 	\int_{D\setminus D_0}  g_Df\Bigl(\frac{g_{{\widehat{D}_E}}}{g_D}\Bigr)  \,d {\nu}_M	+		C\,	\overline{C}_M -C\, u(z_0) \\
\overset{\text{\eqref{dDzEM}--\eqref{DeltaME}}}{=}
 \int_{D\setminus D_0}  g_Df\Bigl(\frac{g_{{\widehat{D}_E}}}{g_D}\Bigr)  Q_{F,D,E}\,d \lambda	+		C\,	\overline{C}_M -C\, u(z_0), 
\end{multline}
где для краткости введено обозначение
\begin{equation*}
Q_{F,D,E}\overset{\eqref{DeltaME}}{=}\frac{1}{2\pi}\bigl(F''(-\log {d_E}) + (1-d_E\Delta d_E) F'(-\log {d_E}) \bigr)\frac{1}{d_E^2}.
\end{equation*}

   По Предложению \ref{prDDdist} с некоторыми постоянными $0<a'\leq 1\leq A'<+\infty$, связанными только с $D$, $E$, $z_0$, имеем  
\begin{equation}\label{gdfgd+d}
	g_D \, f\left(A'\,\frac{\dist \bigl(\cdot, \partial \widehat{D}_E\bigr)}{\dist (\cdot, \partial D)}\right)\leq g_D \,f\Bigl(\frac{g_{\widehat{D}_E}}{g_D}\Bigr)
\leq g_D \, f\left(a'\,\frac{\dist \bigl(\cdot, \partial \widehat{D}_E\bigr)}{\dist (\cdot, \partial D)}\right)
\end{equation}
на $D\setminus D_0$. Для области $\widehat{D}_E$ по её построению \eqref{wD_E} выполняются равенства
\begin{equation*}\label{dDzE+}
d_D(z, E)\overset{\eqref{dDzE}}{=}\dist (z,E)=\dist(z, \partial \widehat{D}_E)
\quad \text{для всех $z\in D$.}
\end{equation*}
Поэтому \eqref{gdfgd+d} может быть переписано в виде
\begin{equation*}
	g_D \, f\left(A'\,\frac{d_D (\cdot, E)}{d_D (\cdot, \partial D)}\right)\leq g_D \,f\Bigl(\frac{g_{\widehat{D}_E}}{g_D}\Bigr)
\leq g_D \, f\left(a'\,\frac{d_D(\cdot, E)}{d_D(\cdot, \partial D)}\right) \quad\text{на $D\setminus D_0$}
\end{equation*}
 Из этиx неравенств и  \eqref{mest+E} следует 
\begin{multline*}
\int\limits_{D\setminus D_0} g_D\, f\left(A'\,\frac{d_D (\cdot, E)}{d_D (\cdot, \partial D)}\right) \,d {\nu}_u 		 
\\ \leq \int_{D\setminus D_0}  g_D f\left(a'\,\frac{d_D(\cdot, E)}{d_D(\cdot, \partial D)}\right) Q_{F,D,E}\,d \lambda	+		C\,	\overline{C}_M -C\, u(z_0).
\end{multline*}
Отсюда согласно верхним и нижним оценкам  \eqref{seqq} на функцию Грина $g_D$, подбирая достаточно большие  $A\geq 1$ и  
малые $a>0$, а также меняя $C\geq 0$, получаем требуемое равномерное по указанным  $f$ неравенство \eqref{itotsfFE+}.
\end{proof}
\begin{remark}\label{r:hr} Подобные  результаты из Теоремы \ref{th:1} можно получить и через гиперболический радиус, используя 
тестовые функции из подраздела \ref{excr}, их оценки в подразделе \ref{distt} через функцию расстояния (Предложение \ref{pr:rtrf}	и 
замечание после него) для мажорант-суперпозиций из подраздела \ref{MgcR}, строящихся через гиперболический радиус. Последняя Теорема \ref{th:distEDD}	близка и параллельна части исследований из 
\cite{BGK09}--\cite{FR13}, дополняет  их, но без  увязки наших результатов с возможными специальными свойствами  функции расстояния до подмножества  $E\subset \partial D$, как это сделано в разных вариантах в \cite{BGK09}--\cite{FR13}.
\end{remark}

\subsubsection{\underline{Перспективы и заключительные замечания}}\label{persp}
Дальнейшее развитие ---
\begin{itemize}
		\item  интегральные версии основных результатов данной статьи, в которых  поточечные ограничения посредством мажоранты $M$ заменяются ограничениями на интегралы с весом, на пути   распространения  обобщённой формулы Пуассона--Йенсена \eqref{f:PJ} из Предложения \ref{pr:2}
на тестовые функции из Определения \ref{testv} и их меры Рисса на основе той или иной степени реализации Гипотезы \ref{g:1}; 
\item перенос результатов на области $D$ в  $\RR^m$, $m\in \NN$, с применениями к оценкам объёмов, или  $(2n-2)$-мер Хаусдорфа,   нулевых множеств  голоморфных функций $f$ (с учётом кратности) с ограничениями  на их рост в псевдовыпуклой области $D\subset \CC^n$ через известную классическую формулу Пуанкаре--Лелона, напрямую, --- с коэфициентом, зависящим только от $n$, --- связывающей эти объёмы 	с мерой Рисса (плюри-)cубгармонической функции $\log |f|$.
	\end{itemize}
 
Их  изложение будет дано   в планируемых  последующих наших работах. 

Последний многомерный субгармонический  вариант развития наших результатов  поддаётся реализации без существенного роста сложностей и в значительной своей части  анонсирован и/или реализован в \cite{KhSP16}--\cite{KhaAbdRoz18}.
<<Плюрисубгармонический>> же перенос результатов на псевдовыпуклые области в $\CC^n$ 
для плюрисубгармонической мажоранты  $M$, основанный на плюрисубгармоничности по существу,  более сложен, но  всё же  вполне  перспективен на основе комплексной теории потенциала и аналитических дисков \cite{Klimek}, \cite{Khab01}, \cite{CR+}, 
\cite{Poletsky}, \cite{Po99}.  При этом будут важны именно равномерные относительно мажоранты $M$ оценка \eqref{mest} в Основной Теореме   и  явная  форма постоянных $C$ в \eqref{cz0C}  и $\overline{C}_M$ в \eqref{{mest}C} из Основной Теоремы, для чего только  мы и прослеживали в подготовительных результатах к ней  и её доказательстве вид этих постоянных.

\end{fulltext}

\end{document}